\newtheorem{theorem}{Theorem} 
\newtheorem{lemma}[theorem]{Lemma}
\newtheorem{proposition}[theorem]{Proposition}
\newdefinition{remark}{Remark}
\newtheorem{corollary}[theorem]{Corollary}
\newtheorem{claim}{Claim}
\newcommand{\EE}{\mathbb E }
\newcommand{\PP}{\mathbb P }
\newcommand{\RR}{\mathbb R }
\newcommand{\limt}{\lim_{t \rightarrow \infty}}
\newcommand{\q}{{\bf q}}
\newlist{thmlist}{enumerate}{1}
\setlist[thmlist]{label=(\roman{thmlisti}), ref=\thethm.(\roman{thmlisti}),noitemsep}
\def\thickhline{%
  \noalign{\ifnum0=`}\fi\hrule \@height \thickarrayrulewidth \futurelet
   \reserved@a\@xthickhline}
\def\@xthickhline{\ifx\reserved@a\thickhline
               \vskip\doublerulesep
               \vskip-\thickarrayrulewidth
             \fi
      \ifnum0=`{\fi}}
\newlength{\thickarrayrulewidth}
\newcolumntype{?}{!{\vrule width 1pt}}
\begin{document}

\title{Exponential Tail Bounds on Queues: A Confluence of Non-Asymptotic Heavy Traffic and Large Deviations}

\begin{keyword}
Classical heavy traffic, Large deviations, Tail probabilities, Join-the-shortest queue, Transform Method, exponential Lyapunov function
\end{keyword}

\begin{abstract}
In general, obtaining the exact steady-state distribution of queue lengths is not feasible. Therefore, our focus is on establish bounds for the tail probabilities of queue lengths. Specifically, we examine queueing systems under Heavy-Traffic (HT) conditions and provide exponentially decaying bounds for the probability $\PP(\epsilon q > x)$, where $\epsilon$ is the HT parameter denoting how far the load is from the maximum allowed load. Our bounds are not limited to asymptotic cases and are applicable even for finite values of $\epsilon$, and they get sharper as $\epsilon \to 0$. Consequently, we derive non-asymptotic convergence rates for the tail probabilities. Unlike other approaches such as moment bounds based on drift arguments and bounds on Wasserstein distance using Stein's method, our method yields sharper tail bounds. Furthermore, our results offer bounds on the exponential rate of decay of the tail, given by $-\frac{1}{x} \log \PP(\epsilon q > x)$ for any finite value of $x$. These can be interpreted as non-asymptotic versions of Large Deviation (LD) results. To obtain our results, we use an exponential Lyapunov function to bound the moment generating function of queue lengths and apply Markov's inequality.

We demonstrate our approach by presenting tail bounds for: (i) a continuous time Join-the-shortest queue (JSQ) load balancing system, (ii) a discrete time single-server queue and (iii) an $M/M/n$ queue. We not only bridge the gap between classical-HT and LD regimes but also explore the large system HT regimes for JSQ and $M/M/n$ systems. In these regimes, both the system size and the system load increase simultaneously. Our results also close a gap in the existing literature on the limiting distribution of JSQ in the super-NDS (a.k.a. super slowdown) regime. This contribution is of an independent interest. Here, a key ingredient is a more refined characterization of state space collapse for JSQ system, achieved by using an exponential Lyapunov function designed to approximate the $\ell_{\infty}$ norm.
\end{abstract}


\author[1]{Prakirt Raj Jhunjhunwala}
\ead{prakirt@gatech.edu}
\affiliation[1]{organization={Georgia Institute of Technology},
                addressline={North Avenue},
                city={Atlanta},
                postcode={30332 GA},
                country={USA}
                }

\author[2]{Daniela Hurtado-Lange}
\ead{dahurtadolange@wm.edu}
\affiliation[2]{organization={William and Mary},
                city={Williamsburg},
                postcode={VA 23185},
                country={USA}
                }

\author[3]{Siva Theja Maguluri}
\ead{siva.theja@gatech.edu}
\affiliation[3]{organization={Georgia Institute of Technology},
                addressline={North Avenue},
                postcode={30332 GA},
                city={Atlanta},
                country={USA}
                }
\maketitle

\section{Introduction}
Queueing models are used to study performance of many systems such as cloud computing, data centers, ride hailing, call centers etc. In general, obtaining the complete distribution of queue lengths in these systems is intractable.
Therefore, a common approach is to study asymptotic regimes. There are several popular regimes such as Heavy Traffic (HT), large scale regime, or Large Deviations (LD). In the HT regime, the system is loaded close to its maximum capacity while keeping the number of servers fixed. In the large-systems regime, the system's load is fixed, but the number of servers is increased to infinity. And in the LD limit one studies the probability of rare events, that is, the tail probability for large thresholds. Recently, the Many-Server Heavy-Traffic (Many-Server-HT) regime has gained more popularity, where the system is loaded to maximum capacity while simultaneously increasing the number of servers. The system's behavior varies greatly depending on how quickly the load increases relative to the number of servers. As such one employs very different analysis techniques to study queueing systems in different regimes.

In the study of HT asymptotics, one typically scales the queue lengths using a parameter that represents the system's load. By denoting the load as $1-\epsilon$, the HT limit is achieved when $\epsilon$ approaches zero. Most of the literature focuses on systems that
satisfy the so-called Complete Resource Pooling (CRP) condition and behave like a single-server queue in the limit. For such systems, it is well-known that the scaled queue length follows an exponential distribution in the HT limit, which gives the tail probabilities of the limiting system. However, the rate of convergence of the tail probabilities (of the pre-limit system) to the corresponding HT value remains unknown.

Most real world systems involve Service Level Agreements (SLA), where customers are promised a specific level of service, including the maximum delay they can expect.
Motivated by this, in this paper, we focus on establishing sharp bounds on the tail probabilities of scaled queue length of the pre-limit system, i.e., for $\epsilon>0$. In particular, we get non-asymptotic bounds of the form
$$\PP(\epsilon q>x)\leq \kappa(\epsilon,x) e^{-\theta(\epsilon) x},$$
where $q$ represents the total queue length in steady state. Here, $\theta(\epsilon)$ gives the decay rate of the tail probability of the pre-limit system, and $\theta(\epsilon)$ converges to the correct HT value as $\epsilon \rightarrow 0$. Recent results show the rate of convergence to HT in terms of the mean, moments, or Wasserstein's distance (for references on each of these, see Section \ref{sec: intro-related-work}). These methods focus on the entire distribution of the queue lengths and drown the tail. For example, consider the second moment, and suppose $\epsilon q$ converges in distribution to the random variable $\Upsilon$. Then, from existing results, one obtains that $|\EE[\epsilon^2 q^2] - \EE[\Upsilon^2] |$ is $O(\epsilon)$, which gives a valid bound. From these results, one can obtain bounds in terms of tail probability of the form $|\PP(\epsilon q>x) - \PP(\Upsilon >x)| \leq O(\epsilon)$. However, these are not very informative as the tail probability itself can be much smaller than $O(\epsilon)$. Therefore, the rate of convergence of tail probabilities cannot be obtained using the existing methodologies. In this work, we correctly characterize $\theta(\epsilon)$ to obtain the rate of convergence of tail probability to the corresponding HT value.  Our results are non-asymptotic in the sense that they are valid whenever $\epsilon$ is small, and not just when $\epsilon\rightarrow 0$. Also, our results are precise when $\epsilon$ gets closer to 0, recovering the HT results.

Our work bridges the gap between the LD,  HT, and Many-Server-HT regimes.
When one studies the LD regime, the goal is to find the exponential rate at which the tail probability decays, which is precisely given by $\theta(\epsilon)$. As such, our tail bounds can be used to recover the non-asymptotic LD results. 
Thus, our tail bounds are at a confluence of non-asymptotic HT  and non-asymptotic LD. This extends the understanding of tail behavior beyond the classical HT regime. To the best of our knowledge, such comprehensive LD results have not been previously reported in the existing literature. 

\subsection{Main contribution}\label{sec:intro-main-contribution}

We illustrate our methodology by providing results for three 
well-studied systems, viz, a load-balancing system under Join the Shortest Queue (hereafter referred to as the JSQ system), a discrete-time Single-Server Queue (SSQ), and a multi-server system with a single queue ($M/M/n$ queue). Our contributions for each of these systems are mentioned below.

\subsubsection{JSQ system}
We consider a continuous-time system with $n$ servers, each of them with its own queue.  Jobs arrive according to a Poisson process with rate $\lambda_n$, and are routed to the server with the shortest queue (breaking ties at random). Further, the service times are exponentially distributed with rate $\mu$. In the context of the JSQ system, we consider the Many-Server-HT regime, where the system size $n$ grows to infinity while the HT parameter $\epsilon_n$ approaches zero. Specifically, we consider $\epsilon_n = n^{-\alpha}$ with $\alpha>1$ constant, and take the limit as $n\rightarrow \infty$. For the JSQ system, we have the following two contributions.

\begin{itemize}
    \item \textbf{Tail probability: } In Theorem \ref{thm: jsq_cont_tail_bound}, we show that the tail probability of the steady-state scaled total queue length satisfies
\begin{align*}
   \frac{1}{1-\epsilon_n} e^{-\theta_n x} \leq \PP\Big(\epsilon_n \sum_{i=1}^n q_i > x\Big) \leq 2ex\Big(1+ \kappa_2 n \epsilon_n \log \frac{1}{\epsilon_n}\Big) e^{-\theta_n x},
\end{align*}
where $\theta_n := \frac{1}{\epsilon_n }\log \frac{1}{1-\epsilon_n}$.
The lower bound is valid for all values of $n$ and $\epsilon_n$, while the upper bound holds when the term $n \epsilon_n \log \frac{1}{\epsilon_n}$ is sufficiently small. The upper bound leverages the State Space Collapse (SSC) property of the JSQ system, where the $n$-dimensional state vector collapses to a one-dimensional subspace, so the JSQ system behaves like an SSQ. The SSC property holds only when the load is sufficiently large, and so we need $n \epsilon_n \log \frac{1}{\epsilon_n}$ to be sufficiently small. This is satisfied for sufficiently large $n$, when $\epsilon_n=n^{-\alpha}$ with $\alpha>1$.

\begin{figure}
    \centering
    \includegraphics[scale=0.5]{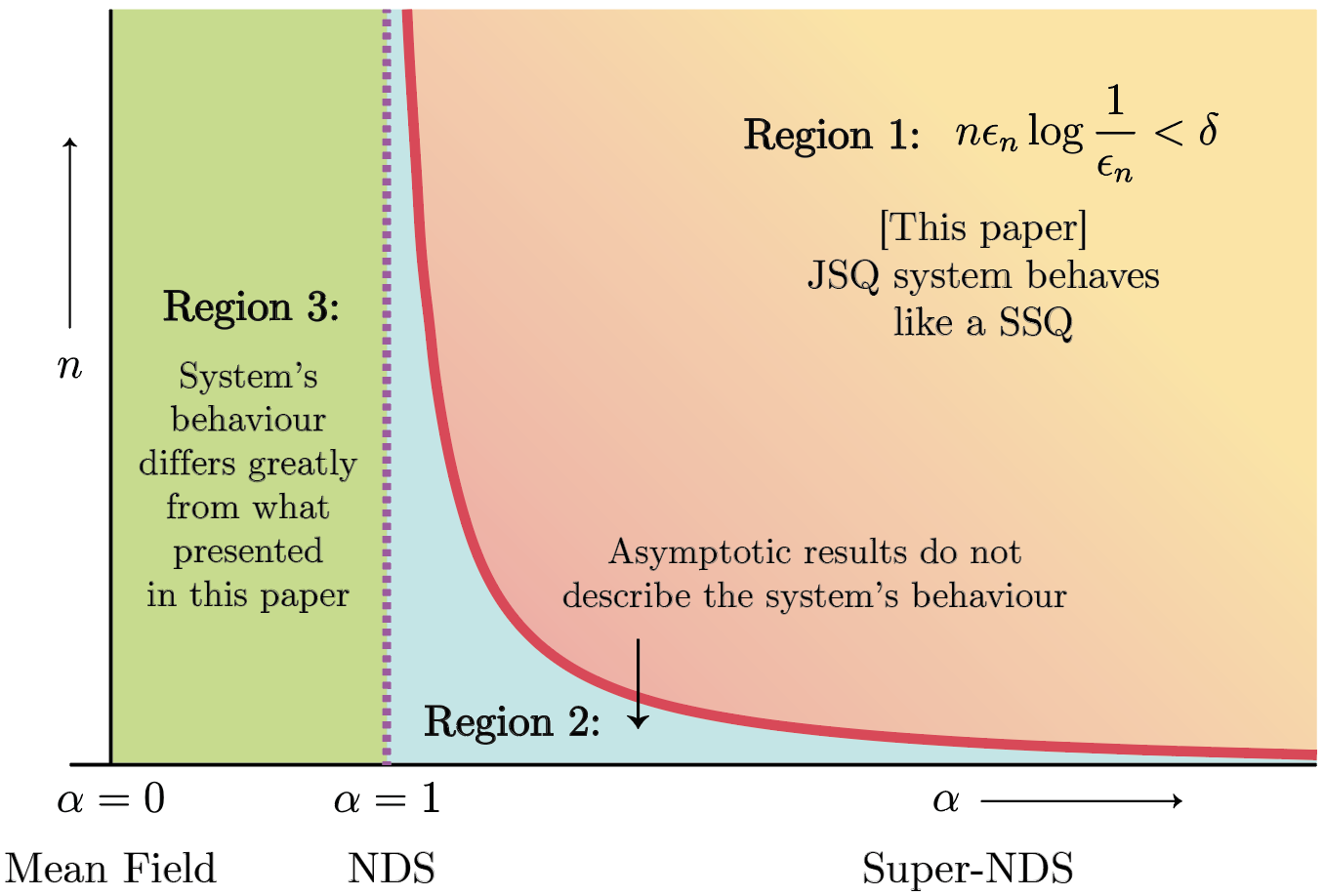}
    \caption{Visual representation of the condition $n \epsilon_n \log \frac{1}{\epsilon_n}< \delta$, where $\delta$ is a small constant. }
    \label{fig:JSQ-contributions}
\end{figure}
    
Figure \ref{fig:JSQ-contributions} visually illustrates the condition discussed above. Region 1 represents values of $n$ and $\alpha$ where $n \epsilon_n \log \frac{1}{\epsilon_n}$ is small, and our tail bounds for the JSQ system hold within this region. As $\alpha$ approaches 1, satisfying the condition $n \epsilon_n \log \frac{1}{\epsilon_n}<\delta$ becomes more challenging, leading to a regime change consistent with existing literature \cite{GupWal_NDS_JSQ}. In Region 2, we believe that the HT approximation fails to accurately describe the system dynamics. Finally, Region 3 corresponds to the range $\alpha\in[0,1]$, where the results and analysis techniques significantly differ from those presented in this paper.

We also obtain an LD result for the JSQ system, i.e., we have
$$\lim_{x\rightarrow\infty} \frac{1}{x} \log \PP\Big(\epsilon_n \sum_{i=1}^n q_i > x\Big) = -\theta_n.$$
Our LD result concerning the scaled total queue length is asymptotically precise as $\epsilon_n$ approaches zero in Many-Server-HT settings (provided that $\alpha>1$).

Importantly, our work provides an exact characterization of the decay rate of the tail probabilities for the pre-limit system. This is possible because we establish both upper and lower bounds on the tail probability. As a result, our research represents a significant advancement compared to existing literature.
Thus, for JSQ, our result not only bridges the gap between HT and LD, but also connects these to (some) many server regimes. 

\item \textbf{Limiting distribution: } In Theorem \ref{thm: jsq_cont_limiting_dis}, we show that as $n \rightarrow \infty$, all the (scaled) queue lengths become identical and are exponentially distributed with mean $1$, i.e., in the Many-Server-HT (i.e., $\epsilon_n = n^{-\alpha}$ with $\alpha>1$), we prove that
\[n\epsilon_n \q \stackrel{d}{\rightarrow} \Upsilon \mathbf{1},\]
where $\q$ is the queue length vector in steady state, $\Upsilon$ is an exponentially distributed random variable, and $\mathbf{1}$ is a vector of all ones.
Prior work \cite{HurMag_2022_alpha_continuous} establishes such a result only for $\alpha>2$ and leaves the regime $1<\alpha\leq 2$ open. 
This result thus fills the gap in the literature and in conjunction with all the prior work \cite{HurMag_2022_alpha_continuous}, and
completes our understanding of JSQ in all of the Many-Server-HT regimes.
A key step to this end is in establishing SSC for the JSQ system especially in the regimes $\alpha \in (1, 2]$. Our approach in establishing SSC result for the JSQ system uses a novel Lyapunov function, and differs from the approach based on the drift arguments of \cite{hajek_drift} as used in prior work, \cite{atilla, MagSri_SSY16_Switch, hurtado2020transform}. Intuitively, instead of working with the $\ell_2$ distance of the queue length vector from the one dimensional subspace, our key idea is that one should work with the $\ell_{\infty}$ distance which gives sharper bounds. However, since the $\ell_{\infty}$ distance is not easily amenable to drift arguments, we work with a Lyapunov function that can be interpreted as a smooth approximation of the $\ell_{\infty}$ distance.
\end{itemize} 

\subsubsection{Discrete time Single-Server Queue}

We consider a discrete-time SSQ with general arrival and service distributions, characterized by mean arrival rate $\lambda_\epsilon$ and mean service rate $\mu_\epsilon$, respectively. We introduce the parameter $\epsilon$ to represent the HT condition, where $\lambda_\epsilon = \mu_\epsilon(1-\epsilon)$. Additionally, we define $\sigma_\epsilon^2$ as the sum of the variances of the arrival and service processes.

In Theorem \ref{thm: ssq_tail_bound}, we obtain the following tail bound and LD result for the pre-limit system:
\begin{align*}
\mathbb P(\epsilon q > x) \leq e\theta_\epsilon x  e^{-\theta_\epsilon (1-\kappa_1 \epsilon)x}, && \lim_{x\rightarrow \infty} \frac{1}{x} \log \PP(\epsilon q > x) \leq -\theta_\epsilon (1- \kappa_1 \epsilon).
\end{align*}
Here, $\theta_\epsilon = \frac{2\mu_\epsilon}{\sigma_\epsilon^2}$ and $\kappa_1$ are defined in terms of the system parameters and moments of the arrival and service processes. Also, the constant $\kappa_1$ depend on the thrid moments of arrival and service. This result exhibit the influence of a general arrival and service distribution on the decay rate of the tail probability under non-asymptotic HT conditions and also provide a non-asymptotic LD result for the steady-state queue length.

\subsubsection{$M/M/n$ system}

An $M/M/n$ queue is a multi-server system with a single queue, where the inter-arrival and service times (of each server) are exponentially distributed, with rates $\lambda_n$ and $\mu$, respectively. To parameterize the Many-Server-HT regime, we use $\lambda_n = n\mu(1-\epsilon_n)$. In this case, we consider two separate quantities: (i) the number of waiting customers, denoted by $w_n$, and (ii) the number of idle servers, denoted by $r_n$; and provide tail probabilities for both quantities under proper scaling. 

Our approach provides results in three different Many-Server-HT regimes, viz. Sub-Halfin-Whitt (Sub-HW) regime when $\alpha\in\left(0,\frac{1}{2}\right)$, Halfin-Whitt (HW) regime when $\alpha=\frac{1}{2}$, and Super-Halfin-Whitt (Super-HW) regime when $\alpha\in\left(\frac{1}{2},\infty\right)$. A brief overview of our results for the $M/M/n$ system is presented in Table \ref{tbl: mmn} below. More details are provided in Section \ref{sec: mmn}.

\renewcommand{\arraystretch}{1.6}
\begin{table}[ht]
\begin{center}
\begin{tabular}{c?c?c?c}
&  Sub-HW & Halfin-Whitt \rule{0pt}{12pt}& Super-HW \\ \thickhline
$\epsilon_n = n^{-\alpha}$ with  & $\alpha\in \big(0, \frac{1}{2}\big)$ & $\alpha=\frac{1}{2}$&  $\alpha> \frac{1}{2}$    \\ \hline
$\PP(\epsilon_n w_n>x| w_n>0)$  & \multicolumn{3}{c}{$O\big( e^{-\theta_n x}\big)$ \rule{0pt}{12pt}}  \\ \hline
$\PP(\eta_n  \tilde r_n>x| r_n>0)$ &\multicolumn{3}{c}{$O\big(e^{-\frac{1}{2}x^2}\big)$ }  \\ \hline
$\PP(w_n>0)$  & $O\big(n^{\alpha-\frac{1}{2}} e^{-n\epsilon_n}\big)$                                            & $\rightarrow p<1$& $\rightarrow 1$   \\ \hline
$\PP(r_n>0)$  &$\rightarrow 1$ & $\rightarrow 1-p<1$  & $O(n^{\frac{1}{2}-\alpha}) $
\\
\end{tabular}
\caption{A brief overview of our results for the $M/M/n$ queue. Here, $\tilde r_n := r_n -n\epsilon_n$. The scaling parameter for $w_n$ is the HT parameter $\epsilon_n$, while the scaling parameter for $\tilde r_n$ is $\eta_n:= \frac{1}{\sqrt{n(1-\epsilon_n)}}$. Also, $p$ is a constant given in Theorem \ref{thm: mmn_idleservers_HW}.}
\label{tbl: mmn}
\vspace{-2em}
\end{center}
\end{table}

The limiting distribution of the scaled number of waiting customers and the scaled number of idle servers were established in \cite{braverman2017stein} so we skip mentioning that here. However, one can easily verify that our results recover the limiting distribution as $n\rightarrow \infty$. 
Note that the probability of there being any idle server, i.e., $\PP(r_n>0)$ goes to zero in the Super-HW regime. As such, the tail bound on $r_n$ is useful only in Sub-HW and HW regime. Similarly, the tail bound on $w_n$ is valid only in HW and Super-HW regimes.

\subsection{Key aspects of our approach}\label{sec:intro-methodology}

 The key idea of this paper is to leverage the existence of the Moment Generating Function (MGF) of the scaled queue length process in the vicinity of zero to obtain the decay rate of the tail probability. Let's consider a non-negative random variable $X$ and a constant $a > 0$ such that $\mathbb{E}[e^{aX}] \leq \kappa$. By applying Markov's inequality, we can establish that $\mathbb{P}(X > x) \leq \kappa e^{-ax}$. This implies that the tail of $X$ decays exponentially with rate $a$. Thus, determining the appropriate decay rate of the tail probability reduces to
 finding the largest $a$ such that $\mathbb{E}[e^{aX}]$ is bounded by a constant.

We adopt this approach to derive tail bounds for the scaled steady-state queue-length process. We establish an upper bound on the MGF of the scaled queue lengths, denoted as $\mathbb{E}[e^{\theta \epsilon q}]$, for a large range of $\theta$. To determine the correct range of $\theta$, we approximate the behavior of the actual system by considering its HT counterpart. By doing so, we also obtain the rate of convergence of the tail probabilities to their corresponding values in the HT regime.

The use of exponential Lyapunov functions to study queue-length behavior was introduced as a transform method to obtain HT results in \cite{hurtado2020transform}. To get tight pre-limit results, we use the same exponential test function, but we bound the error terms in a more refined manner. 
As such, our technique is divided into three steps: (i) derive the MGF (or an upper bound) of the scaled queue length, $\mathbb{E}[e^{\theta \epsilon q}]$, for a large range of values of $\theta$ , (ii) approximate the MGF with its HT counterpart, and (iii) use Markov's inequality and optimize over the values of $\theta$. In the process, we encounter three  multiplicative terms that are required to characterize the tail probability.

The first term is called the SSC violation. As mentioned before, JSQ system satisfies SSC in HT. However, in non-asymptotic HT conditions, SSC is not fully satisfied. As such, the term SSC violation  accounts for the level to which SSC is violated in a pre-limit system. The second one is the pre-limit tail. When the HT parameter $\epsilon$ is greater than zero, the decay rate of the tail probability deviates slightly from its HT value, and we refers to this correct decay rate as the pre-limit tail. The third term is referred to as the pre-exponent error. Our approach uses Markov's inequality to compute bounds tail probabilities from the MGF. However, Markov's inequality incurs a cost while obtaining the tail bound, which is captured by the pre-exponent error.

As a consequence, we easily obtain a LD result after characterizing a bound on the tail probability. Mathematically, as $\EE[e^{aX}] \leq \kappa$ implies $\PP(X>x) \leq \kappa e^{-ax}$, we also get $\lim_{x\rightarrow \infty} \frac{1}{x} \log \PP(X>x) \leq -a$. While characterizing the LD result, the pre-limit error term plays a key role. The other two error terms, namely the pre-exponent error and the SSC error, vanish as $x \rightarrow \infty$. Therefore, the pre-limit error term is essential for accurately characterizing the LD behavior of the system.

\subsection{Related work}\label{sec: intro-related-work}
Most of the literature studying the HT behavior of various queues uses a methodology frequently called `diffusion limits.' A non-exhaustive sample of articles using this method is \cite{kingman1962_brownian,harrison1998heavy,Williams_CRP,harrison2004dynamic,gamarnik2006validity}. Under this approach, the scaled system is shown to converge to a Reflected Brownian Motion (RBM) in HT, and the steady-state behavior of this RBM is studied. The final step is the so-called interchange-of-limit proof which is usually remarkably challenging.

More recently, there has been an increased number of papers using alternative methods that do not require the interchange-of-limits step. These are Stein's method \cite{braverman_steins_2017,braverman2017stein2,Lei_steinHT_SIGM17,Walton_SteinHT}, the BAR approach \cite{braverman_BAR,braverman_BAR_23,DaiGlynnXu_BAR_23}, and the drift method \cite{atilla,hurtado2020transform,JhunZubMag_2022_JSQ-A}. All these methods have something in common with the work we present in this paper and, at the same time, are substantially different.

In the first approach, i.e., the Stein's method, 
one derives bounds for the Wasserstein distance \cite{vallender1974calculation} between the pre-limit system and the limiting distribution, enabling one to obtain the convergence rates. Similar to this, we also establish bounds that depict the similarity between the pre-limit system and the limiting distribution. However, our focus lies in directly characterizing the tail probability. In the second method, i.e., the BAR approach, the goal is to establish a certain equation, called Basic Adjoint Relationship (BAR), in terms of the MGF of the steady state distribution of the limiting distribution. In contrast, we direct work with the MGF of the pre-limit system.

In the third method, i.e., the drift method, the main idea is to use steady-state conditions and carefully chosen test functions to compute bounds that are tight in heavy traffic. Within this method, the use of exponential test functions contributed to the development of the transform method \cite{hurtado2020transform,JhunZubMag_2022_JSQ-A}. Our work in this paper is inspired by the transform method in the sense that using exponential test functions and careful manipulation of the queue-length dynamics are essential to obtain the results. In contrast to the transform method, which only focuses on the limiting distribution, we carefully compute the error terms also to obtain the rate of convergence of the tail probabilities. To the best of our knowledge, the rate of convergence to heavy traffic in terms of tail probabilities has been not been known before.

\renewcommand{\arraystretch}{1.4}
\begin{table}
\centering
\begin{tabular}{l?l?l}
 $\alpha$     & Regime     & Reference \\ \thickhline
$ \alpha\downarrow 0$ & Mean Field &  \cite{mukherjee2018universality,mitzenmacher_po2,mitzenmacher_po2_2,stolyar_JIQ,Lei_steinHT_SIGM17}    \\\hline
$\alpha\in \big(0,\frac{1}{2}\big)$& Sub-Halfin-Whitt &  \cite{liu2018simple,varma2022powerofd,bhamidi2022near}    \\\hline
 $\alpha=\frac{1}{2}$& Halfin-Whitt &  \cite{BanMukh_JSQ_tail_asymptotics,BanMukh_JSQ_sensitivity,Braverman2020_jsq,Gamarnik_JSQ,HalfinWhitt_Regime}    \\\hline
$\alpha\in \big(\frac{1}{2},1\big)$ & Super-Halfin-Whitt &  \cite{liu2021universal,zhao2023manyserver} \\\hline
$\alpha=1$  &   Non-Degenerate Slowdown     &  \cite{GupWal_NDS_JSQ}    \\\hline
$\alpha \in (1,2]$ &   \multirow{2}{*}{Super-Slowdown}& This Work\\\cline{1-1} \cline{3-3}
     $\alpha>2 $ &           &   \cite{HurMag_2022_alpha_continuous}   \\\hline 
$\alpha=\infty$& Classic-HT &    \cite{atilla,foschini1978basic,Williams_state_space,Williams_CRP,hurtado2020transform,harrison1999heavy,harrison1998heavy}  \\
\end{tabular}
\caption{Literature review of asymptotic regimes for Load balancing system.}
\label{tab:my_label}
\end{table}

The literature on Many-Server-HT asymptotics is sub-divided in multiple categories depending on how fast the load increases with respect to the number of servers. Using the parameterization $\epsilon_n=n^{-\alpha}$ with $\alpha>0$ introduced above, different regimes are obtained depending on the value of $\alpha$. The literature on Many-Server-HT with $\alpha\in(0,1]$ is vast and uses different analysis techniques. We direct the readers to \cite{zhao2023manyserver} and references therein for more details on Many-Server-HT with $\alpha\in(0,1]$.
Closest to our work, in \cite{HurMag_2022_alpha_continuous}, it is proved that for $\alpha>2$ the scaled total queue length is exponentially distribution in limit as $n\rightarrow \infty$. In our work, we close the gap between the result in \cite{HurMag_2022_alpha_continuous} and the Non-Degenerate Slowdown (NDS) \cite{gupta2019load} regime $(\alpha=1)$, and we obtain the HT behavior for all $\alpha>1$.

Large Deviations (LD) is a popular regime when one needs to study the performance of a control policy for routing or scheduling jobs. A comprehensive report on the application of large deviation theory to queueing problems can be found in \cite{ganesh2004big}. In \cite{glynn1994logarithmic}, it was proved that the if the sequence of arrivals and services follow a certain Large Deviation Principle (LDP), then, one can obtain the decay rate of the tail probability of the associated queue length. This argument was used in \cite{puhalskii1995large,duffield1995large} to prove a LDP for the queue length process of a single server process, with further generalization in \cite{dupuis1995large}. Recently, there has been quite some work on establishing a LDP for the JSQ system, see \cite{ridder2005large,foley2001join,puhalskii2007large} and the references therein. In contrast to the existing literature, our work provides a LD result as a simple closed form expression in a non-asymptotic LD regime. Notably, we also provide an upper bound on the pre-exponent term, which is typically absent in previous works. Moreover, our work considers the JSQ system in a Many-Server regime, and incorporates the crucial phenomenon of State Space Collapse. Such a phenomenon was considered in \cite{foley2001join}, but only for a JSQ system of size $2$.

In addition to proving a LDP, an intriguing research area involves the use of Lyapunov functions to develop policies that minimize the probability of queue overflow \cite{LinVen_09_LD,venkataramanan2013queue,BanKanQian_18_LD-MW}. The focus in \cite{LinVen_09_LD,venkataramanan2013queue} is on minimizing the decay rate of tail probability, and non-asymptotic large deviation results are not provided. Another related research field examines queues with Gaussian input, which are particularly challenging to analyze in most scenarios \cite{ZubMan_21_LD_GaussianInput,Man_07_LD-book}. These are related to ours in the sense that all compute tight bounds that characterize the probability of rare events. However, the methodologies are different, and they focus on the LD regime only. In this paper, we obtain LD results that are closely connected to the tail probabilities of the queue lengths in HT and Many-Server-HT.

\section{Join-the-shortest queue system} \label{sec: jsq_cont}


\subsection{Model}
\label{sec: jsqcont_model}


We consider a continuous-time queueing system consisting of $n$ SSQs in parallel, each serving jobs according to first-come-first-serve. At any time $t$, let $\q(t)$ denote the queue length vector, where $q_i(t)$ is the queue length of $i^{th}$ queue. For the ease of notation, we use $\overline q(t)$ to denote the total queue length at time $t$, i.e., $\overline q(t) = \sum_{i=1}^n q_i(t)$.
Jobs arrive to the system according to a Poisson process with rate  $\lambda_n$, and service times are exponentially distributed with rate $\mu$.

 When a job arrives, it is dispatched according to JSQ, that is, the job is sent to the queue with index
\begin{equation*}
    i^*(t) \in \arg\min_{i} q_i(t),
\end{equation*}
breaking ties uniformly at random. Under the JSQ policy, the queue-length process $\{\q(t)\}_{t\geq 0}$ is a Continuous-Time Markov Chain (CTMC). Further, it is well-known that the queue-length process is stable (positive recurrent) if the arrival rate is strictly smaller than the total service rate $(\lambda_n < n\mu)$. In this work, we assume that the system satisfies the stability condition. Under this assumption, the steady-state distribution of the queue-length process $\{\q(t)\}_{t=0}^\infty$ exists, and we denote by $\pi_n$. We use $\q$ to denote the steady-state queue length vector, that is, $\q$ follows the distribution $\pi_n$, and $\overline q := \sum_{i=1}^n q_i$.

The system load is $\rho_n := \frac{\lambda_n}{n\mu}$, and we define $\epsilon_n = 1 - \rho_n$. Then, the system approaches HT as $\epsilon_n \rightarrow 0$. We consider the JSQ system in HT and Many-Server-HT. In (classical) HT, the system size $n$ is a constant, and the HT parameter $\epsilon_n$ does not depend on $n$. Then, we drop the subscript $n$ and take the limit $\epsilon\rightarrow 0$. In the Many-Server-HT, the load and the number of servers increase together. Then, we consider $\epsilon_n = n^{-\alpha}$ and take the limit as $n\to\infty$. In this work, in case of Many-Server-HT we only consider the case when $\alpha>1$, which is also known as super-slowdown regime.
In both regimes, we aim to provide a tail bound on the total queue length, i.e., a bound on $\PP\big( \epsilon_n \overline{q} >x \big) $.



\subsection{Results for JSQ system}

It is well known that the HT distribution of the scaled steady-state total queue length $\epsilon \overline{q}$ converges to an exponential random variable as $\epsilon \rightarrow 0$ \cite{hurtado2020transform}. Further, as shown in \cite{HurMag_2022_alpha_continuous}, the result extends to Many-Server-HT, where $\epsilon_n \overline{q}$ converges to an exponential random variable in distribution if $\alpha>2$. 

In Theorem \ref{thm: jsq_cont_limiting_dis}, we complete the result by demonstrating that $\epsilon_n \overline{q}$ converges in distribution to an exponential random variable $\alpha>1$. Our result enhances the understanding of the behavior of the scaled queue length in Many-Server-HT under a broader range of traffic conditions, encompassing values of $\alpha \in (1,2]$.

\begin{theorem} \label{thm: jsq_cont_limiting_dis}
    Consider the JSQ system as presented in Section \ref{sec: jsqcont_model}. Suppose the system satisfies the condition $\lambda_n = n\mu(1-n^{-\alpha})$, i.e., $\epsilon_n = n^{-\alpha}$, where $\alpha >1$. Then, for any $\theta <1$, we have
    \begin{align*}
        \lim_{n\rightarrow \infty} \EE[e^{\theta n^{-\alpha} \sum_{i=1}^n q_i} ] = \frac{1}{1-\theta}, && n^{1-\alpha} \q \stackrel{d}{\rightarrow} \Upsilon \mathbf 1, \ \text{ as } \ n\rightarrow \infty,
    \end{align*}
    where $\Upsilon$ is an exponential random variable with mean $1$.
\end{theorem}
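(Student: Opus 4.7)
My plan is to deduce both conclusions from the non-asymptotic tail bound already stated in Theorem \ref{thm: jsq_cont_tail_bound}, combined with a state space collapse (SSC) estimate. The first step is to verify that, under $\alpha>1$, the two error quantities in Theorem \ref{thm: jsq_cont_tail_bound} vanish: the exponent $\theta_n=\frac{1}{\epsilon_n}\log\frac{1}{1-\epsilon_n}\to 1$ since $\epsilon_n\to 0$, and the pre-exponent correction $n\epsilon_n\log\frac{1}{\epsilon_n}=\alpha n^{1-\alpha}\log n\to 0$ since $\alpha>1$. Hence both the upper and lower bounds in the theorem squeeze $\PP(\epsilon_n \overline q>x)$ to $e^{-x}$ for every $x\ge 0$, i.e.\ $\epsilon_n\overline q\Rightarrow \Upsilon$ where $\Upsilon\sim\mathrm{Exp}(1)$.

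The second step is to lift this convergence in distribution to convergence of MGFs. Fix $\theta<1$ and choose $\delta>0$ so that $\theta+\delta<1$. For all $n$ sufficiently large, $\theta_n\ge \theta+\delta$, and the upper tail bound gives
\begin{equation*}
\EE\bigl[e^{(\theta+\delta/2)\epsilon_n\overline q}\bigr]
=1+(\theta+\delta/2)\int_0^\infty e^{(\theta+\delta/2)x}\PP(\epsilon_n\overline q>x)\,dx
\le C<\infty,
\end{equation*}
uniformly in $n$. This uniform bound yields uniform integrability of $\{e^{\theta\epsilon_n\overline q}\}$, and combined with $\epsilon_n\overline q\Rightarrow\Upsilon$ gives $\EE[e^{\theta\epsilon_n\overline q}]\to \EE[e^{\theta\Upsilon}]=1/(1-\theta)$, which is the first conclusion.

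For the vector statement $n^{1-\alpha}\q\stackrel{d}{\to}\Upsilon\mathbf 1$, I would decompose coordinatewise
\begin{equation*}
n^{1-\alpha}q_i=\epsilon_n\overline q+n^{1-\alpha}\bigl(q_i-\overline q/n\bigr).
\end{equation*}
The first summand converges in distribution to $\Upsilon$ by the previous step, uniformly in $i$. For the vanishing of the second summand, I would invoke an $\ell_\infty$-type SSC bound of the form $\EE[\max_i(q_i-\overline q/n)^+]=O(\log n/\epsilon_n^{?})$, proved as part of the infrastructure behind Theorem \ref{thm: jsq_cont_tail_bound} (via the exponential Lyapunov function that approximates $\|\cdot\|_\infty$ mentioned in the introduction). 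Since $q_i\ge 0$ and $\overline q/n\ge 0$, the deviation in either direction can be controlled similarly. Multiplying the SSC bound by $n^{1-\alpha}$ and using Markov's inequality shows that $n^{1-\alpha}(q_i-\overline q/n)\to 0$ in probability, and a Slutsky-type argument delivers joint convergence $n^{1-\alpha}\q\stackrel{d}{\to}\Upsilon\mathbf 1$.

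\textbf{Main obstacle.} The entire scheme hinges on an SSC estimate strong enough to survive the scaling $n^{1-\alpha}$ in the regime $\alpha\in(1,2]$. The standard $\ell_2$-drift approach of \cite{atilla,hajek_drift}, which controls $\EE[\|\q-\overline q\mathbf 1/n\|_2^2]$, is too crude here because bounding $\|\cdot\|_\infty$ by $\|\cdot\|_2$ costs a factor of $\sqrt n$ and breaks for $\alpha\le 2$. The refined bound therefore must be obtained directly at the $\ell_\infty$ level; the smooth exponential surrogate for $\|\cdot\|_\infty$ advertised in the introduction is the device that absorbs this difficulty, and verifying its negative drift in steady state is the genuinely nontrivial computation underpinning the proof.
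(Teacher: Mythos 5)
The proposal has a genuine gap in its first (and crucial) step. You claim that the upper and lower bounds of Theorem~\ref{thm: jsq_cont_tail_bound} squeeze $\PP(\epsilon_n\overline q>x)$ to $e^{-x}$, but they do not: the upper bound is $2ex\bigl(1-\kappa_2 n\epsilon_n\log\epsilon_n\bigr)e^{-\theta_n x}$, and while the SSC factor $1-\kappa_2 n\epsilon_n\log\epsilon_n\to 1$ and $\theta_n\to 1$ under $\alpha>1$, the pre-exponent factor $2ex$ persists. Thus the upper bound converges to $2ex\,e^{-x}$, not $e^{-x}$, and for fixed $x$ there remains a gap of size roughly $(2ex-1)e^{-x}$ between the two bounds. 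Tail bounds of this form establish tightness and exponential decay of the family $\{\epsilon_n\overline q\}$ but do not pin down the limiting distribution, so weak convergence cannot be deduced from Theorem~\ref{thm: jsq_cont_tail_bound}. Consequently your uniform-integrability step, which presupposes $\epsilon_n\overline q\Rightarrow\Upsilon$, has nothing to lift.

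The paper's proof proceeds in the opposite logical direction. It starts from the stationary drift identity of Lemma~\ref{lem: jsq_cont_steady_state}, namely $\EE_{\pi_n}[e^{\theta\epsilon_n\overline q}]=\EE_{\pi_n}[\beta_n(\q;\theta)]/\gamma_n(\theta)$, and shows via Claim~\ref{claim: jsq_cont_beta} that $\EE_{\pi_n}[\beta_n]/(n\mu\epsilon_n)\to 1$ and $\gamma_n(\theta)/(n\mu\epsilon_n)\to 1-\theta$, so the MGF converges pointwise to $1/(1-\theta)$ for $\theta<1$. Billingsley's MGF convergence theorem then gives $\epsilon_n\overline q\Rightarrow\Upsilon$. (Both Theorem~\ref{thm: jsq_cont_limiting_dis} and Theorem~\ref{thm: jsq_cont_tail_bound} are downstream of Lemma~\ref{lem: jsq_cont_steady_state}; neither is used to prove the other.) Your third step, decomposing $n^{1-\alpha}q_i=\epsilon_n\overline q + n^{1-\alpha}q_{\perp i}$, matches the paper's structure, but the paper does not need the $\ell_\infty$ moment estimate you gesture at: from Proposition~\ref{prop: jsq_cont_ssc}, $\EE[e^{\theta|q_{\perp i}|}]\le\kappa_\perp$ uniformly in $n$, so Jensen gives $\EE[e^{n\epsilon_n\theta|q_{\perp i}|}]\le\kappa_\perp^{n\epsilon_n}\to 1$ when $n\epsilon_n\to 0$, which directly yields $n\epsilon_n q_{\perp i}\Rightarrow 0$ coordinate by coordinate and then $n^{1-\alpha}\q\Rightarrow\Upsilon\mathbf 1$ by Slutsky. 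Your instinct about why the $\ell_2$-drift bound is inadequate for $\alpha\le 2$ is correct and is exactly the motivation given in the paper for the exponential surrogate of $\|\cdot\|_\infty$.
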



Theorem \ref{thm: jsq_cont_limiting_dis} provides the limiting distribution of the steady-state scaled queue length vector as $n\rightarrow \infty$ for all $\alpha>1$. The proof of Theorem \ref{thm: jsq_cont_limiting_dis} relies on the fact that the JSQ system satisfies SSC, where the $n$-dimensional state vector of the system collapses to a one-dimensional subspace. More precisely, as $n \rightarrow \infty$, we have that $n^{1-\alpha} q_i \approx n^{-\alpha} \sum_{j=1}^n q_j$ for all $i \in \{1,2,\dots,n\}$. The correct characterization of SSC in the Many-Server-HT regime is crucial in proving Theorem \ref{thm: jsq_cont_limiting_dis} and in completing the result of \cite{HurMag_2022_alpha_continuous} for all $\alpha>1$. More details on SSC for the JSQ system in Many-Server-HT are provided in Proposition \ref{prop: jsq_cont_ssc} in Section \ref{sec: jsq_ssc}. A proof sketch for Theorem \ref{thm: jsq_cont_limiting_dis} is provided in Section \ref{sec: jsq_cont_technical_details} and the details are provided in Appendix \ref{app: jsq_cont}.


Next, we provide the tail bound of the scaled steady-state total queue length for the JSQ system. In Theorem \ref{thm: jsq_cont_tail_bound}, we provide a bound on the tail probability $\mathbb P (\epsilon_n \overline{q} > x)$ when the system size is finite.

\begin{theorem} \label{thm: jsq_cont_tail_bound} Consider the JSQ system as presented in Section \ref{sec: jsqcont_model}. Suppose the system satisfies the condition $\lambda_n = n\mu(1-\epsilon_n)$, where $\epsilon_n$ is small enough such that $n \epsilon_n\log \big(\frac{1}{\epsilon_n}\big)  < \frac{ \theta_\perp}{4}$, and let $ \kappa_2:=\frac{4e\kappa_\perp^2}{\theta_\perp}$, where $\kappa_\perp$ and $\theta_\perp$ are constants given in Proposition \ref{prop: jsq_cont_ssc}. Suppose $\theta_n := \frac{1}{\epsilon_n} \log \frac{1}{1-\epsilon_n} $ Then, for all $x>1-\epsilon_n$ we have
\begin{align} \label{eq: jsq_cont_tail_bound}
    \PP\Big(\epsilon_n \sum_{i=1}^n q_i > x\Big) \leq  \Big[2ex\big(1- \kappa_2 n \epsilon_n \log \epsilon_n\big)\Big]e^{-\theta_n x}.
\end{align}
Further, for any $n\geq 1$ and $\epsilon_n \in(0,1)$, we have the lower bound 
\begin{align}\label{eq: jsq_cont_tail_lowerbound}
    \PP\Big(\epsilon_n \sum_{i=1}^n q_i > x\Big) \geq \frac{1}{1-\epsilon_n} e^{-\theta_n x}.
\end{align}
As a consequence, we have the following large deviation result.
\begin{align}
\label{eq: jsq_cont_large_dev}
   \lim_{x\rightarrow \infty} -\frac{1}{x} \log \PP\Big(\epsilon_n \sum_{i=1}^n q_i > x\Big) = \theta_n = \frac{1}{\epsilon_n} \log \frac{1}{1-\epsilon_n} \in \Big(1, \frac{1}{1-\epsilon_n} \Big).
\end{align}
\end{theorem}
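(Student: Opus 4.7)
The plan follows the three-step methodology from Section 1.3: (i) upper-bound the moment generating function $\EE[e^{\theta \epsilon_n \overline q}]$ for every $\theta$ strictly below the candidate decay rate $\theta_n$; (ii) apply Markov's inequality and optimize in $\theta$; and (iii) establish a matching lower bound by stochastic dominance to an explicit reference queue. The large deviation identity \eqref{eq: jsq_cont_large_dev} then drops out of \eqref{eq: jsq_cont_tail_bound} and \eqref{eq: jsq_cont_tail_lowerbound} by taking $-\tfrac{1}{x}\log(\cdot)$ and sending $x\to\infty$, since the polynomial-in-$x$ pre-factor on the upper side and the constant pre-factor on the lower side both contribute nothing on the $\tfrac{1}{x}\log$ scale.

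For the upper bound, I start from the stationarity identity $\EE[\mathcal{G}V(\q)]=0$ with the exponential test function $V(\q)=e^{\theta\epsilon_n\overline q}$, where $\mathcal{G}$ is the JSQ generator. Arrivals raise $\overline q$ by one at rate $\lambda_n$, and each of the $n-I(\q)$ non-empty queues (with $I(\q):=|\{i:q_i=0\}|$) completes a service at rate $\mu$, so the identity collapses to
\begin{equation*}
    \EE\bigl[(n-I(\q))\,V(\q)\bigr] = n(1-\epsilon_n)\,e^{\theta\epsilon_n}\,\EE[V(\q)].
\end{equation*}
In the idealized SSC picture where $n-I(\q)=\min(\overline q,n)$ exactly, this yields an M/M/1-style recursion whose solution is a geometric MGF that diverges precisely at $\theta=\theta_n$. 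For the pre-limit JSQ system I will use the refined SSC estimate in Proposition \ref{prop: jsq_cont_ssc} to replace $n-I(\q)$ by $\min(\overline q,n)$ inside the expectation at a multiplicative cost of $1+O(n\epsilon_n\log(1/\epsilon_n))$, obtaining
\begin{equation*}
    \EE[e^{\theta\epsilon_n\overline q}] \leq \frac{1+O(n\epsilon_n\log(1/\epsilon_n))}{1-\theta/\theta_n}, \qquad 0\leq\theta<\theta_n.
\end{equation*}
Markov's inequality with the near-optimal choice $\theta=\theta_n\bigl(1-\tfrac{1}{\theta_n x}\bigr)$ makes $1-\theta/\theta_n=\tfrac{1}{\theta_n x}$ and $e^{-\theta x}=e\cdot e^{-\theta_n x}$, producing a pre-factor $e\theta_n x\bigl[1+O(n\epsilon_n\log(1/\epsilon_n))\bigr]$ and, after absorbing $\theta_n\leq \tfrac{1}{1-\epsilon_n}\leq 2$ into the constant, the bound \eqref{eq: jsq_cont_tail_bound}.

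For the lower bound \eqref{eq: jsq_cont_tail_lowerbound}, I couple $\overline q$ with the length $q^*$ of an M/M/1 queue with arrival rate $\lambda_n$ and service rate $n\mu$. Since the total instantaneous service rate in JSQ is $\mu(n-I(\q))\leq n\mu$, a standard pathwise coupling gives $\overline q\geq_{st} q^*$; and $q^*$ has a geometric stationary distribution with parameter $1-\epsilon_n$, whose tail at level $x/\epsilon_n$ rearranges to the claimed $e^{-\theta_n x}$ lower bound with the indicated pre-factor. Combining \eqref{eq: jsq_cont_tail_bound} and \eqref{eq: jsq_cont_tail_lowerbound}, taking $-\tfrac{1}{x}\log(\cdot)$, and sending $x\to\infty$ immediately gives the large deviation identity \eqref{eq: jsq_cont_large_dev}.

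The main obstacle is the SSC reduction that replaces $\EE[(n-I(\q))V]$ by $\EE[\min(\overline q,n)V]$ with only a $1+O(n\epsilon_n\log(1/\epsilon_n))$ multiplicative correction in the MGF. This is exactly what Proposition \ref{prop: jsq_cont_ssc} is designed to supply, and its formulation in terms of an exponential-Lyapunov approximation to the $\ell_\infty$ distance (rather than the $\ell_2$ distance used in prior drift-based work) is essential: an $\ell_2$-based SSC result would produce an error of order $\sqrt{n\epsilon_n}$, too coarse to survive the subsequent optimization over $\theta$ and recover the sharp exponent $\theta_n$. The hypothesis $n\epsilon_n\log(1/\epsilon_n)<\theta_\perp/4$ is precisely what is required to keep this SSC correction a strictly controlled multiplicative factor rather than a dominating one.
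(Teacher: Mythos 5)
Your proposal follows the paper's proof closely: the generator/stationarity identity for the exponential test function $e^{\theta\epsilon_n\overline q}$ (Lemma \ref{lem: jsq_cont_steady_state}), the SSC bound from Proposition \ref{prop: jsq_cont_ssc} to control the boundary term in the resulting MGF formula, Markov's inequality optimized over $\theta\in(0,\theta_n)$, and the pooled-server $M/M/1$ coupling for the lower bound, with the large-deviation identity read off the two exponential bounds. The only cosmetic difference is that the paper bounds the boundary term $\EE\big[\sum_{i}\mathds 1_{\{q_i=0\}}e^{\theta\epsilon_n\overline q}\big]$ directly via the observation that $q_i=0$ forces $q_{\perp i}=-\overline q/n$, so $e^{\theta\epsilon_n\overline q}=e^{-\theta n\epsilon_n q_{\perp i}}$ (then H\"older plus Proposition \ref{prop: jsq_cont_ssc}), rather than phrasing SSC as a replacement of the number of busy servers by $\min(\overline q,n)$; both framings describe the same step and produce the same $\big(1-\kappa_2 n\epsilon_n\log\epsilon_n\big)$ multiplicative correction.
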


Theorem \ref{thm: jsq_cont_tail_bound} establishes the exponential decay of the tail of the total queue length for a JSQ system. The bounds presented in Eq. \eqref{eq: jsq_cont_tail_bound} and \eqref{eq: jsq_cont_tail_lowerbound} are valid for both the HT and Many-Server-HT regimes. 
Further, the result in Theorem \ref{thm: jsq_cont_tail_bound} is consistent with the fact that the distribution of the scaled steady-state total queue length, i.e., $\epsilon_n \overline{q}$, converges to an exponential random variable in distribution, as $n$ grows to $\infty$. 

In Theorem \ref{thm: jsq_cont_tail_bound}, we are able to characterize the exact tail decay rate of the continuous time JSQ system. Our result implies that, in Many-Server-HT with $\alpha>1$ and when the term $n \epsilon_n\log \big(\frac{1}{\epsilon_n}\big)$ is small enough, the decay rate of the JSQ system exactly matches the tail decay rate of an SSQ. This is a significant advancement compared to existing literature. Previous work primarily focused on comparing the behavior of the JSQ system with an SSQ in the limiting traffic condition, specifically as $\epsilon_n \rightarrow 0$. In contrast, our work examines the behavior of a pre-limit JSQ system and directly compares it to the corresponding SSQ.


\begin{remark}
Tail probabilities are often better characterized by multiplicative rather than additive errors. For instance, a bound of the form $\Big| \mathbb P (\epsilon q > x) - \exp \Big(-\frac{2\mu x}{\sigma^2}\Big) \Big| \leq O(\epsilon)$ can become very loose for large values of $x$. 
An interesting example is presented in \cite[Section 2.1]{vershynin2018high}, where the Central Limit Theorem provides an approximation of $N$ i.i.d. random variables by a normal distribution, but the error in the approximation can be as large as $O(1/\sqrt{N})$, which is much larger than the tail itself. Thus, even if the tail probability converges to an exponentially decaying tail in the limit, the error in the approximation may still decay slowly.
Therefore, concentration-inequality-type bounds (as in Theorem \ref{thm: ssq_tail_bound}) better characterize the tail probability. 
\end{remark}

Additionally, Eq. \eqref{eq: jsq_cont_large_dev} represents a significant result in the form of an LD principle. As $\epsilon_n \rightarrow 0$ in HT or Many-Server-HT, we have $\theta_n \rightarrow 1$, and so, our LD result for $\epsilon_n \overline{q}$ is asymptotically precise in both, the HT and Many-Server-HT settings (provided that $\alpha>1$). 




\subsubsection{Discussion on terms in Theorem \ref{thm: jsq_cont_tail_bound}:} 

Our bounds on tail probability on JSQ system, presented in Theorem \ref{thm: jsq_cont_tail_bound}, can be decomposed into terms as discussed below.
\begin{itemize}
    \item \textbf{SSC violation:} For the JSQ system, the SSC violation term is given by $\big(1- \kappa_2 n\epsilon_n \log \epsilon_n\big) $. In non-asymptotic HT conditions (i.e., when $\epsilon >0$ in HT, or $n< \infty$ in Many-Server-HT), the SSC property is not fully satisfied. This introduces an additional multiplicative term in the tail probability bound, which is captured by $1- \kappa_2 n\epsilon_n \log \epsilon_n$, and reflects the extent to which SSC is violated. 

To get the SSC violation term below a certain threshold $\delta$ in HT, we need $\epsilon \log \big(\frac{1}{ \epsilon}\big)$ to be on the order of $O\big(\frac{\delta}{n} \big)$. Similarly, in the Many-Server-HT scenario, it is required that $n^{1-\alpha}\log n $ is on the order of $O(\delta)$, or alternatively, $n$ must be at least $\Omega \big(\exp\big(\frac{1}{\alpha-1} \log \frac{1}{\delta}\big)\big)$, or $n \sim \Omega \big(\delta^{-\frac{1}{\alpha-1}}\big)$
 in magnitude. Satisfying such conditions becomes increasingly challenging as $\alpha$ approaches $1$. This is also shown in Fig. \ref{fig:JSQ-contributions} in Section \ref{sec:intro-main-contribution}. Furthermore, it is important to acknowledge that such conditions cannot be met for $\alpha \leq 1$. These observations provide an intuitive argument for the failure of the SSC property (as presented in this paper) when $\alpha \leq 1$. Essentially, for $\alpha \leq 1$, the notion of SSC deviates significantly from the one considered in this paper.

 It is worth noting that although a large system size, represented by $n$, is required for the SSC error to be small, it remains bounded. Specifically, in the case of Many-Server-HT, it can be shown that the term $n \epsilon_n \log \frac{1}{\epsilon_n}$ is bounded by $\frac{\alpha}{e(\alpha-1)}$. Consequently, the SSC error is of order at most $O\left(\frac{1}{\alpha-1}\right)$. Therefore, 
 it is reasonable to expect that even for moderate system sizes, the tail probability, $\PP(\epsilon\overline{q}>x)$, exhibits exponential decay. 
 \vspace{0.5em}

\item \textbf{Pre-limit tail:} 
    The pre-limit tail denotes the actual decay rate of the tail probability of $\epsilon_n \overline{q}$ under non-asymptotic HT condition, i.e., $n<\infty$. For the continuous-time JSQ system, we exactly characterize the pre-limit tail, which is given by $\theta_n = \frac{1}{\epsilon_n} \log \frac{1}{1-\epsilon_n}$. In super-slowdown regime,
    as $\epsilon_n$ goes to zero, the tail of $\epsilon_n \overline{q}$ matches that of an exponential distribution with mean $1$, as $\lim_{n\rightarrow \infty} \theta_n =1$. Further, note that, the deviation of the pre-limit tail from the corresponding HT value is given by $|\theta_n-1|$, which is of order $O(\epsilon_n)$.

    In the case of continuous-time JSQ, the arrivals and service times follow an exponential distribution, which is characterized by a single parameter, i.e., the arrival or service rate. The specific properties of the exponential distribution allow for a more accurate analysis of the tail probability.  Intuitively, this distinction is the reason why we are able to precisely characterize the pre-limit tail for the JSQ system. In case of general arrival and service distributions, as presented in the case of the discrete-time SSQ, 
    we do a second-order approximation to approximate the tail probability of the scaled queue length with its HT counterpart. Consequently, we obtain an upper bound on the pre-limit tail. More arguments are provided in Section \ref{sec: ssq}.
    
\item \textbf{Pre-exponent error:} In the context of the JSQ system, the pre-exponent error is represented by the expression $2ex$. This error term arises from  using Markov's Inequality to obtain tail-probability bounds using MGF. 
        To clarify this error term, consider a random variable $X$ that follows an exponential distribution with rate $\lambda$. 
        In this case, the MGF of $X$ is given by $\EE[\exp (\theta X)] = \frac{1}{1- \theta/\lambda}$ for all $\theta< \lambda$. As shown in Lemma \ref{lem: mgf_to_prob_bound} of Appendix \ref{app: essential_lemmas}, by 
        Applying Markov's Inequality to the MGF and optimizing over the value of $\theta$, we obtain $$\PP(X>x) \leq e\lambda x e^{-\lambda x}.$$
        The upper bound differs from the actual tail of $X$ by a multiplicative factor of $e\lambda x$, which arises from using Markov's Inequality. 
        We acknowledge that it may be possible to eliminate the Markov-Inequality error by employing more complex techniques. However, we have chosen to rely solely on Markov's Inequality for our analysis to maintain simplicity. 
\end{itemize}




\subsection{State Space Collapse for JSQ system}
\label{sec: jsq_ssc}

Next, we mathematically specify SSC for the JSQ system with $\lambda_n = n\mu(1-n^{-\alpha})$ and $\alpha>1$.

\begin{proposition}
\label{prop: jsq_cont_ssc}
Consider the JSQ system as presented in Section \ref{sec: jsqcont_model}. Suppose the system satisfies the condition $\lambda_n = n\mu(1-\epsilon_n)$, and let $\kappa_\perp = 128$ and $\theta_\perp = 1/96$. Then, for $\epsilon_n \leq 1/2$, and $\theta\in (0,\theta_\perp)$, we have that for all $i \in \{1,2,\dots,n\}$,
\begin{align}
    \label{eq: jsq_cont_perpmgfbound}
    \EE_{\pi_n}\Big[ \sum_{i=1}^n e^{-\theta q_{\perp i}}+   \sum_{i=1}^n e^{\theta q_{\perp i}} \Big] \leq \kappa_\perp n, && \EE_{\pi_n}\big[  e^{\theta |q_{\perp i}|}\big] \leq \kappa_\perp,
\end{align}
where $q_{\perp i} = q_i - \frac{1}{n} \sum_{j=1}^n q_j$. 
\end{proposition}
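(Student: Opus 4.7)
The plan is a Foster--Lyapunov drift argument in steady state using an exponential Lyapunov function that plays the role of a smooth $\ell_\infty$ surrogate for the perpendicular component $\q_\perp = \q - (\overline q/n)\mathbf{1}$. I would take
$$V(\q) \;=\; \sum_{i=1}^n \bigl(e^{\theta q_{\perp i}} + e^{-\theta q_{\perp i}}\bigr),$$
whose two summands are (after exponentiating) the standard log-sum-exp smoothings of $\max_i q_{\perp i}$ and $-\min_i q_{\perp i}$. The target is a drift inequality of the form $G V(\q) \le -c_1\,V(\q) + c_2\,n$ valid on the entire state space for all $\theta\in(0,\theta_\perp)$ and $\epsilon_n\le 1/2$; stationarity of $\pi_n$ (combined with an a-priori $\EE_{\pi_n}[V]<\infty$ obtained from a truncation and a standard geometric-tail argument on the total queue length) then gives $\EE_{\pi_n}[V]\le (c_2/c_1)\,n$, proving the first bound in \eqref{eq: jsq_cont_perpmgfbound} with $\kappa_\perp = c_2/c_1$.

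The drift is computed by splitting $V = S + S'$ with $S = \sum_i e^{\theta q_{\perp i}}$ and $S' = \sum_i e^{-\theta q_{\perp i}}$. Every transition shifts one coordinate of $\q_\perp$ by $\pm(1-1/n)$ and the remaining $n-1$ coordinates by $\mp 1/n$, so collecting the corresponding factors $(e^{\pm\theta(1-1/n)}-1)$ and $(e^{\mp\theta/n}-1)$ yields
$$GS \;=\; \bigl[\lambda_n(e^{-\theta/n}-1) + \mu b(e^{\theta/n}-1)\bigr]\,S + \lambda_n e^{-\theta/n}(e^{\theta}-1)\,e^{\theta q_{\perp i^*}} - \mu e^{\theta/n}(1-e^{-\theta})\,R,$$
where $b = |\{i : q_i > 0\}|$ and $R = \sum_{i:q_i>0} e^{\theta q_{\perp i}}$, with a mirror-image formula for $GS'$. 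Substituting $\lambda_n = n\mu(1-\epsilon_n)$ and Taylor-expanding the exponentials through second order in $\theta/n$ reduces $GS$ to the leading form $\mu\theta\bigl[(b/n-(1-\epsilon_n))\,S + n(1-\epsilon_n)\,e^{\theta q_{\perp i^*}} - R\bigr]$ plus a controlled $O(\mu\theta^2)$ remainder.

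The main obstacle will be turning these expressions into a genuine contraction, and it requires two structurally different arguments for $S$ and $S'$. For $GS$, the JSQ routing property forces $q_{\perp i^*}\le 0$, so $e^{\theta q_{\perp i^*}}\le 1$, and Jensen's inequality on the centered vector gives $S\ge n$; in the fully-busy regime $b=n$ (where $R=S$) these collapse the leading part to $-\mu\theta(1-\epsilon_n)(S-n)$, and in the partially-busy regime $b<n$ the bulk coefficient $b/n-(1-\epsilon_n)$ itself turns strictly negative while the $n-b$ empty queues contribute only $(n-b)\,e^{-\theta\overline q/n}\le n$ to $S-R$, yielding $GS\le -\tfrac{\mu\theta}{4}\,S + O(\mu\theta n)$ in either case. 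For $GS'$, the extremality $q_{\perp i^*} = \min_j q_{\perp j}$ gives the complementary bound $S'\le n\,e^{-\theta q_{\perp i^*}}$, making the leading term non-positive; the quantitative contraction is extracted by showing that whenever $S'$ exceeds a constant multiple of $n$, some $q_{\perp j}$ must be strongly negative, so the extremal gap $n\,e^{-\theta q_{\perp i^*}}-S'$ is itself of order $S'$. Second-order Taylor errors and lower-order corrections are absorbed by restricting $\theta\le\theta_\perp = 1/96$, and the condition $\epsilon_n\le 1/2$ keeps $1-\epsilon_n$ bounded below; the explicit constants $\kappa_\perp = 128$ and $\theta_\perp = 1/96$ are what the resulting bookkeeping delivers once these pieces are combined.

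The per-coordinate bound in \eqref{eq: jsq_cont_perpmgfbound} follows immediately from exchangeability. The arrival and service mechanisms of the JSQ system are symmetric under permutations of the queue indices, so $\pi_n$ is invariant under such permutations, and hence $\EE_{\pi_n}[e^{\pm\theta q_{\perp i}}]$ is the same for every $i$. Averaging the aggregate bound gives $\EE_{\pi_n}[e^{\theta q_{\perp i}}]+\EE_{\pi_n}[e^{-\theta q_{\perp i}}] = \EE_{\pi_n}[V]/n\le\kappa_\perp$, and the pointwise inequality $e^{\theta|x|}\le e^{\theta x}+e^{-\theta x}$ yields the second claim with the same constant.
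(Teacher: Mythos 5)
Your setup is right — the same Lyapunov function $V = \sum_i e^{\theta q_{\perp i}} + \sum_i e^{-\theta q_{\perp i}}$, a Foster--Lyapunov drift in steady state, and exchangeability for the per-coordinate claim. The $S = \sum_i e^{\theta q_{\perp i}}$ part is also essentially fine: the arrival goes to the minimum queue, which always has $q_{\perp i^*}\le 0$, and the indicator boundary terms on empty queues contribute at most $O(n)$, so $GS$ has a genuinely negative coefficient on $S$ plus a bounded offset, exactly as the paper gets in its bound on $\EE[\Delta V_\perp\mid\q(t)]$. (One detail: your claimed sign $b/n - (1-\epsilon_n) < 0$ when $b<n$ requires $\epsilon_n < 1/n$, which is not assumed; the paper avoids this split entirely by using the indicator bound $e^{\theta q_{\perp i}}\mathds 1_{\{q_i=0\}}\le 1$ directly.)

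The genuine gap is in your treatment of $GS' $, and it is the crux of the whole proposition. You claim that whenever $S'$ is a large multiple of $n$, the extremal gap $n\,e^{-\theta q_{\perp,\min}} - S'$ is of order $S'$. This is false. Take the state with $n-1$ empty queues and one queue holding $Q\gg n/\theta$ jobs, so $q_{\perp i}=-Q/n$ for $i<n$ and $q_{\perp n}=Q(n-1)/n$. Then $S'\approx (n-1)e^{\theta Q/n}$, $n\,e^{-\theta q_{\perp,\min}}=n\,e^{\theta Q/n}$, and the gap is $\approx e^{\theta Q/n}$, i.e.\ only of order $S'/(n-1)$. Feeding that into the drift bound for $S'$ (where the favorable arrival term is $-\lambda_n e^{-\theta q_{\perp,\min}}$ and the unfavorable service-correction terms scale like $\mu\theta S'$) gives a net negative contribution of order $S'/n^2$ against a positive one of order $\theta S'$, so for any fixed $\theta>0$ your $GS'$ bound is not a contraction on those states. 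The reason the proposition is still true is that in exactly those states $S\approx e^{\theta Q(n-1)/n}$ is astronomically larger than $S'$, so the contraction from $GS$ overwhelms the slack in $GS'$. But this requires combining $S$ and $S'$, not arguing about each separately.

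This is precisely what the paper's $D$-function step does. After writing the combined drift, the arrival-plus-correction terms collapse to $\lambda_n D(\q_\perp;\theta)$ with
\begin{align*}
D(\mathbf x;\theta) = -e^{-\theta x_{\min}} + \tfrac{1}{n}\textstyle\sum_i e^{-\theta x_i} - \tfrac{1}{n}\textstyle\sum_i e^{\theta x_i},
\end{align*}
and the heart of the proof is the inequality $D(\mathbf x;\theta) \le 4 - \frac{1}{4n}\sum_i e^{-\theta x_i} - \frac{1}{2n}\sum_i e^{\theta x_i}$ valid on the hyperplane $\sum_i x_i = 0$. This says that the "bad" term $\frac{1}{n}\sum e^{-\theta x_i}$ can always be absorbed, up to a constant, either by $-e^{-\theta x_{\min}}$ or by sacrificing half of the $-\frac{1}{n}\sum e^{\theta x_i}$ slack. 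Proving it requires the three-case analysis in the paper (on $n_+$ vs.\ $n/4$, on whether the mean of $e^{-\theta x_i}$ is dominated by the average of $e^{-\theta x_{\min}}$ and the mean of $e^{\theta x_i}$, and a sub-split on the size of $\theta\sum_{i\in\mathcal I^+} x_i$), and Case~3 crucially uses $\sum_i x_i = 0$ to show most negative coordinates are small, which is the structural fact your sketch never invokes. Without something playing the role of this $D$-bound, the proposal does not close.
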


As mentioned before, an essential component in establishing the limiting distribution (Theorem \ref{thm: jsq_cont_limiting_dis}) and the tail probability (Theorem \ref{thm: jsq_cont_tail_bound}) is to characterize SSC accurately. For the JSQ system, when SSC is achieved, the queue-length vector $\q$ collapses to a subspace where all its coordinates become identical, implying that the asymptotic behavior of the JSQ system closely resembles that of an SSQ.

In Proposition \ref{prop: jsq_cont_ssc}, we establish a specific notion of SSC by demonstrating that the MGF of the deviations of the individual queue lengths and their average, referred to as the "perpendicular component" and denoted by $\q_{\perp} = \q - \frac{1}{n}\overline{q} \mathbf{1}_n$, remains uniformly bounded within an interval around zero for all values of $\epsilon_n\leq 1/2$. In particular, this result implies that all the moments of the perpendicular component remain uniformly bounded, even in the limit as the system approaches HT conditions ($\epsilon_n \rightarrow 0$). In comparison, the elements of the queue length vector $\q$ are on the order of $\frac{1}{\epsilon_n}$. Consequently, as $\epsilon_n$ becomes small, we observe that $\epsilon_n \q\approx\frac{\epsilon_n}{n} \overline{q}\, \mathbf{1}_n$ (or equivalently, $\epsilon_n \q_\perp \approx \mathbf{0}_n$).

\paragraph{Intuition behind the choice of the Lyapunov function} In order to establish Proposition \ref{prop: jsq_cont_ssc}, we employ the Lyapunov function  $\sum_{i=1}^n e^{-\theta q_{\perp i}}+ \sum_{i=1}^n e^{\theta q_{\perp i}}$, demonstrating its negative drift. It is worth noting that this Lyapunov function differs from the one considered in \cite{HurMag_2022_alpha_continuous}, namely $\exp \big(\theta \| \q_\perp\|_2\big)$, which yields the SSC result in the Many-Server-HT regime for $\alpha>2$. However, $\exp \big(\theta \| \q_\perp\|_2\big)$ proves inadequate for establishing SSC in the case of $\alpha \in (1,2]$. This limitation arises because, to show SSC in Many-Server-HT for $\alpha \in (1,2]$, it is necessary to obtain a bound for the MGF of $\|\cdot\|_\infty$-norm of the perpendicular component, denoted as $\| \q_\perp\|_\infty$. Intuitively, for an $n$-dim vector $\mathbf{x}$, if $\|\mathbf x\|_{\infty}$ is $O(1)$, then it implies that $\|\mathbf x\|_{2}$ is $O(\sqrt{n})$. However, vice-versa need not hold. As such, a bound on $\ell_{\infty}$ norm is sharper than a bound on $\ell_2$ norm, and it plays a significant role while proving SSC for $\alpha\in (1,2]$. Furthermore, using $\exp \big(\theta \| \q_\perp\|_\infty\big)$ as the Lyapunov function proves challenging due to the non-smooth nature of the $\| \cdot\|_\infty$-norm. Consequently, we use the Lyapunov function $\sum_{i=1}^n e^{-\theta q_{\perp i}}+ \sum_{i=1}^n e^{\theta q_{\perp i}}$ as a suitable alternative. This choice represents a smooth approximation of $\exp \big(\theta \| \q_\perp\|_\infty\big)$ and simplifies the technical aspects of the analysis compared to dealing directly with $\exp \big(\theta \| \q_\perp\|_\infty\big)$. Similar exponential function was also used in the context of graphical allocation of balls in bins in \cite{peres2015graphical, bansal2022power}.

\subsection{Proof of Theorem \ref{thm: jsq_cont_limiting_dis} and Theorem \ref{thm: jsq_cont_tail_bound}}
\label{sec: jsq_cont_technical_details}

In this section, provide the proof  sketches for Theorem \ref{thm: jsq_cont_limiting_dis} and Theorem \ref{thm: jsq_cont_tail_bound}. More details and precise mathematical arguments are provided in Appendix \ref{app: jsq_cont}.

\begin{lemma}\label{lem: jsq_cont_steady_state}
Consider the JSQ system as presented in Section \ref{sec: jsqcont_model}. Suppose the system satisfies the condition $\lambda_n = n\mu(1-\epsilon_n)$, where $\alpha >1$. Let  
\begin{align*}
    \gamma_n(\theta) :=  n \mu -\lambda_n  e^{\epsilon_n\theta} &&\beta_n (\q;\theta):= \mu\sum_{i=1}^n  \mathds 1_{\{q_i = 0\}} e^{\theta \epsilon_n \sum_{i=1}^n q_i}.
\end{align*}
Then, for any $\theta< \theta_n:= - \frac{1}{\epsilon_n} \log\big( 1- \epsilon_n\big)$, such that $\theta n\epsilon_n < \theta_\perp$, we have 
\begin{align}\label{eq: jsq_cont_mgf}
    \EE_{\pi_n}\Big[e^{\theta \epsilon_n \sum_{i=1}^n q_i }\Big] = \frac{1}{\gamma_n(\theta)} \EE_{\pi_n}[\beta_n(\q;\theta)].
    \end{align}
\end{lemma}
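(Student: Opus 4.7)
The plan is to apply the stationary balance equation $\EE_{\pi_n}[\mathcal L V(\q)] = 0$ to the exponential test function $V(\q) := e^{\theta \epsilon_n \overline q(t)}$ (where $\overline q = \sum_i q_i$), with $\mathcal L$ the generator of the JSQ CTMC, and then rearrange. Only two kinds of transitions affect $\overline q$: an arrival at rate $\lambda_n$ increases $\overline q$ by $1$, and a service completion at each non-empty queue (rate $\mu$ per such queue) decreases $\overline q$ by $1$. With $n_0(\q) := \sum_i \mathds 1_{\{q_i=0\}}$ denoting the number of empty queues, this gives
\begin{align*}
\mathcal L V(\q) = V(\q)\Bigl[\lambda_n(e^{\theta\epsilon_n}-1) + \mu(n - n_0(\q))(e^{-\theta\epsilon_n}-1)\Bigr].
\end{align*}
Setting the stationary expectation to zero and collecting terms yields
\begin{align*}
\bigl[\lambda_n(e^{\theta\epsilon_n}-1) + n\mu(e^{-\theta\epsilon_n}-1)\bigr]\,\EE_{\pi_n}[V(\q)] = \mu(e^{-\theta\epsilon_n}-1)\,\EE_{\pi_n}[n_0(\q) V(\q)].
\end{align*}

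Next I multiply both sides by $-e^{\theta\epsilon_n}/(e^{\theta\epsilon_n}-1)$, which is well-defined since $\theta > 0$. Using the identity $e^{\theta\epsilon_n}\bigl[\lambda_n(e^{\theta\epsilon_n}-1) + n\mu(e^{-\theta\epsilon_n}-1)\bigr] = (e^{\theta\epsilon_n}-1)(\lambda_n e^{\theta\epsilon_n} - n\mu)$, the coefficient on the left becomes $n\mu - \lambda_n e^{\theta\epsilon_n} = \gamma_n(\theta)$, while the coefficient on the right becomes $\mu$. This produces $\gamma_n(\theta)\,\EE_{\pi_n}[V(\q)] = \mu\,\EE_{\pi_n}[n_0(\q) V(\q)] = \EE_{\pi_n}[\beta_n(\q;\theta)]$. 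The hypothesis $\theta < \theta_n$ is precisely the condition $\gamma_n(\theta) > 0$ (since $\lambda_n e^{\epsilon_n\theta_n} = n\mu$), so dividing yields the claimed identity.

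The main obstacle is that $V$ is unbounded, so the step $\EE_{\pi_n}[\mathcal L V] = 0$ is not automatic: it requires both an a priori bound on $\EE_{\pi_n}[V(\q)]$ and a limit-interchange argument. For the a priori bound, I will use the elementary pointwise inequality $\mathds 1_{\{q_i=0\}} \leq e^{-\theta \epsilon_n n\, q_i}$ (equal to $1$ at $q_i = 0$ and positive otherwise), together with the decomposition $q_i = \overline q/n + q_{\perp i}$, which gives $n_0(\q)\,V(\q) \leq \sum_i e^{-\theta\epsilon_n n\, q_{\perp i}}$. Under the hypothesis $\theta n \epsilon_n < \theta_\perp$, Proposition \ref{prop: jsq_cont_ssc} then bounds the stationary expectation of the right-hand side by $\kappa_\perp n$, giving $\EE_{\pi_n}[\beta_n(\q;\theta)] \leq \mu \kappa_\perp n$. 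To convert this into finiteness of the MGF, I run the drift computation above on the bounded truncation $V_K(\q) := V(\q) \wedge e^{\theta\epsilon_n K}$, which is in the domain of $\mathcal L$; the resulting inequality for $V_K$ inherits the same $K$-uniform upper bound $\mu\kappa_\perp n$ on the right, so monotone convergence yields $\EE_{\pi_n}[V(\q)] \leq \mu\kappa_\perp n/\gamma_n(\theta) < \infty$. With this finiteness in hand, dominated convergence lets me pass $K \to \infty$ in the truncated balance equation to recover the unbounded identity, completing the proof.
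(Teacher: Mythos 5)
Your proof is correct and rests on the same two pillars as the paper's: the generator computation for $V(\q) = e^{\theta\epsilon_n\overline q}$ and the use of Proposition \ref{prop: jsq_cont_ssc} to control $\EE_{\pi_n}[\beta_n(\q;\theta)]$ (your pointwise bound $\mathds 1_{\{q_i=0\}}\leq e^{-\theta\epsilon_n n q_i}$ is a clean substitute for the paper's exact rewriting $\mathds 1_{\{q_i=0\}}e^{\theta\epsilon_n\overline q}=\mathds 1_{\{q_i=0\}}e^{-\theta n\epsilon_n q_{\perp i}}$). Where you diverge is the justification of the stationary balance for the unbounded test function: the paper goes through Lemma \ref{lem: steady_state_lyapunov}, first verifying $\EE[V(\q(t);\theta)]<\infty$ at each finite $t$ and then taking a $t\to\infty$ limit of the finite-time drift recursion, whereas you remain entirely in stationarity and use the bounded truncation $V_K=V\wedge e^{\theta\epsilon_n K}$ with monotone and dominated convergence. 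Your route is arguably more self-contained, since it does not need to apply Lemma \ref{lem: steady_state_lyapunov} (stated for discrete-time chains) to the continuous-time JSQ process; the paper's route buys a single reusable verification lemma. Two small points to tighten. First, passing to the limit in the truncated balance equation is not purely dominated convergence: the truncated equation carries a boundary residual of order $e^{\theta\epsilon_n K}\,\PP(\overline q = K)$, and you need to observe that this vanishes as $K\to\infty$ precisely because the finiteness $\EE_{\pi_n}[V]<\infty$ you just established forces the series $\sum_K e^{\theta\epsilon_n K}\PP(\overline q=K)$ to converge. Second, your argument is written for $\theta>0$ (both the divisor $e^{\theta\epsilon_n}-1$ and the inequality $\mathds 1_{\{q_i=0\}}\leq e^{-\theta\epsilon_n n q_i}$ presume it), but the lemma also covers $\theta<0$; there $V\leq 1$ is bounded, so no truncation is needed and the identity follows immediately --- the paper treats this as a separate, trivial branch.
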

Lemma \ref{lem: jsq_cont_steady_state} provides the MGF of the scaled total queue length $\epsilon_n \overline{q}$ and is an intermediate step to prove the results in Theorems \ref{thm: jsq_cont_limiting_dis} and \ref{thm: jsq_cont_tail_bound}. The proof of Lemma \ref{lem: jsq_cont_steady_state} is provided in Appendix \ref{app: jsq_cont}.

\paragraph{Technical note} It is important to note that the condition $\theta n\epsilon_n < \theta_\perp$ in Lemma \ref{lem: jsq_cont_steady_state} holds true for all $\theta<0$, and when $\epsilon_n$ is small for $\theta \in (0,\theta_n)$. Consequently, Eq. \eqref{eq: jsq_cont_mgf} is valid for all $n$ and $\epsilon_n$ if $\theta<0$, but if $\theta \in (0,\theta_n)$, it can only be used for $\epsilon_n$ small enough. Essentially, we need to use the SSC result given in Proposition \ref{prop: jsq_cont_ssc} to obtain the bounds in Theorems \ref{thm: jsq_cont_limiting_dis} and \ref{thm: jsq_cont_tail_bound}, and these are valid only under the condition $\theta n\epsilon_n < \theta_\perp$.

\begin{proof}[Proof of Theorem \ref{thm: jsq_cont_limiting_dis}]
Recall that we use the notation $ \lambda_n = n\mu(1-\epsilon_n)$ and according to the theorem statement, $\epsilon_n = n^{-\alpha}$, where $\alpha >1$. Also,  $\overline{q} = \sum_{i=1}^n q_i$.  In order to prove Theorem \ref{thm: jsq_cont_limiting_dis}, we use Lemma \ref{lem: jsq_cont_steady_state}. We first prove the following claim.

    \begin{claim}\label{claim: jsq_cont_beta}
    For $n\geq 2$, we have 
\begin{align}\label{eq: jsq_cont_unused}
      \EE_{\pi_n}\big[  \mathds 1_{\{q_i = 0\}}\big] = \epsilon_n, \quad \forall i \in \{1,2,\dots,n\}.
  \end{align}
    And, for any $\theta<\theta_n$ and $\epsilon_n $ small enough such that $2|\theta |  n\epsilon_n\log \big(\frac{1}{\epsilon_n} \big)  < \theta_\perp$, we have 
    \begin{align*}
        \sum_{i=1}^n \EE_{\pi_n} \Big[ \mathds 1_{\{q_i = 0\}} \big| e^{\theta \epsilon_n \overline q} -1\big|\Big] \leq \frac{2e\kappa_\perp^2}{\theta_\perp} |\theta| n^2\epsilon_n^2 \log \Big(\frac{1}{\epsilon_n} \Big),
    \end{align*}
    where $\theta_\perp$ and $\kappa_\perp$ are as defined in Proposition \ref{prop: jsq_cont_ssc}.
    \end{claim}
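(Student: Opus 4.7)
The first identity follows from symmetry of the JSQ policy combined with work conservation. Since JSQ with uniform tie-breaking treats the servers symmetrically, the stationary distribution $\pi_n$ is invariant under coordinate permutations, so $\PP_{\pi_n}(q_i = 0) = p$ is the same for all $i$. Applying stationarity to $\overline{q}$ (i.e., setting the expected generator of $\overline{q}$ to zero) yields
\begin{align*}
    \lambda_n \;=\; \mu\, \EE_{\pi_n}\Big[\sum_{j=1}^n \mathds{1}_{\{q_j > 0\}}\Big] \;=\; n\mu(1-p),
\end{align*}
so $p = 1 - \lambda_n/(n\mu) = \epsilon_n$, as required.

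For the second inequality, the first observation is that on the event $\{q_i = 0\}$ we have $q_{\perp i} = -\overline{q}/n$, hence $|\theta \epsilon_n \overline{q}| = a |q_{\perp i}|$ with $a := |\theta| n \epsilon_n$. Combining this with the standard bound $|e^y - 1| \le |y| e^{|y|}$ gives
\begin{align*}
    \mathds{1}_{\{q_i = 0\}} \big| e^{\theta \epsilon_n \overline{q}} - 1 \big| \;\le\; a\, |q_{\perp i}|\, e^{a |q_{\perp i}|}\, \mathds{1}_{\{q_i = 0\}}.
\end{align*}
The prefactor $a$ already accounts for one of the $n\epsilon_n$ factors appearing in the target bound. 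The remaining task is therefore to show that $\sum_{i=1}^n \EE_{\pi_n}\big[\mathds{1}_{\{q_i=0\}}\, |q_{\perp i}|\, e^{a|q_{\perp i}|}\big]$ is at most a constant times $n\epsilon_n \log(1/\epsilon_n)$; the second factor of $n\epsilon_n$ will then come from the first part of the claim.

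I would achieve this by truncating $|q_{\perp i}|$ at a level $M$ to be chosen. On $\{|q_{\perp i}| \le M\}$ the factor $|q_{\perp i}|\, e^{a|q_{\perp i}|}$ is bounded deterministically by $Me^{aM}$; summing against $\mathds{1}_{\{q_i = 0\}}$ and using the first part of the claim gives a contribution of $Me^{aM}\, n\epsilon_n$. On the complementary event one can drop the indicator, then combine $|q_{\perp i}| \le e^{\delta |q_{\perp i}|}/(\delta e)$ with $\mathds{1}_{\{|q_{\perp i}| > M\}} \le e^{\eta(|q_{\perp i}| - M)}$ to get $|q_{\perp i}|\, e^{a|q_{\perp i}|}\, \mathds{1}_{\{|q_{\perp i}| > M\}} \le (\delta e)^{-1} e^{-\eta M}\, e^{(a+\delta+\eta)|q_{\perp i}|}$. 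Summing and applying the MGF bound of Proposition~\ref{prop: jsq_cont_ssc} (valid as long as $a+\delta+\eta < \theta_\perp$) then produces a tail contribution of order $\kappa_\perp n\, e^{-\eta M}/(\delta e)$.

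The main obstacle is the simultaneous calibration of $(M, \delta, \eta)$: one needs (a) $a+\delta+\eta < \theta_\perp$ so that Proposition~\ref{prop: jsq_cont_ssc} applies, (b) $aM = O(1)$ so that $Me^{aM}$ stays of order $\log(1/\epsilon_n)$, and (c) $e^{-\eta M} \le \epsilon_n$ so that the large-$|q_{\perp i}|$ contribution matches the required $n\epsilon_n \log(1/\epsilon_n)$ scaling. The hypothesis $2|\theta| n\epsilon_n \log(1/\epsilon_n) < \theta_\perp$ is exactly what makes these three constraints simultaneously feasible---for instance by taking $\delta = \eta$ of order $\theta_\perp$ and $M = \log(1/\epsilon_n)/\eta$, all with constants depending only on $\theta_\perp$ and $\kappa_\perp$. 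Collecting these constants and multiplying by the outside prefactor $a = |\theta| n \epsilon_n$ then delivers the bound of the stated form $\frac{2e\kappa_\perp^2}{\theta_\perp} |\theta| n^2 \epsilon_n^2 \log(1/\epsilon_n)$.
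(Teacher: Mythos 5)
Your first identity is essentially the paper's argument: both derivations equate the drift of $\overline{q}$ to zero and invoke permutation symmetry; the paper gets there via the $\theta\to 0^-$ limit of Lemma~\ref{lem: jsq_cont_steady_state}, but the content is the same.

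For the second inequality your route is genuinely different from the paper's and, as far as I can tell, correct. The paper never truncates: after the same reduction to $\mathds 1_{\{q_i=0\}}\,\big|e^{-\theta n\epsilon_n q_{\perp i}}-1\big|$, it applies H\"older's inequality with $\epsilon_n$-dependent exponents $\tfrac{1}{p}+\tfrac{1}{r}=1$, then $|e^y-1|\le|y|e^{|y|}$, then a second H\"older to decouple the polynomial $|q_{\perp i}|^r$ from the exponential $e^{r|\theta|n\epsilon_n|q_{\perp i}|}$, and finally feeds both factors into Proposition~\ref{prop: jsq_cont_ssc}. The logarithm materializes by choosing $r=\log(1/\epsilon_n)$, which turns $\epsilon_n^{1/p}=\epsilon_n^{1-1/r}$ into $e\,\epsilon_n$ and makes the moment factor $\EE[|q_{\perp i}|^{2r}]^{1/2r}$ scale like $r$. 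Your truncation at $M\sim\log(1/\epsilon_n)/\eta$ accomplishes the same splitting more explicitly: on $\{|q_{\perp i}|\le M\}$ the bound is deterministic and picks up the factor $n\epsilon_n$ from the first identity, while on $\{|q_{\perp i}|>M\}$ you trade the indicator for $e^{-\eta M}\le\epsilon_n$ and absorb the rest into the MGF bound of Proposition~\ref{prop: jsq_cont_ssc}. What the H\"older route buys is compactness and a single clean optimization over one parameter $r$; what your truncation buys is a transparent explanation of why the $\log(1/\epsilon_n)$ factor appears (it is the cut-off level needed to make the tail piece match the $\epsilon_n$ scaling) and why the hypothesis $2|\theta|n\epsilon_n\log(1/\epsilon_n)<\theta_\perp$ is exactly the right constraint (it keeps $aM$ bounded while leaving room in the MGF exponent). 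Your constant tracking is informal, but the structure of the bound is delivered, and the large-$|q_{\perp i}|$ piece is indeed of strictly lower order once $\epsilon_n$ is small enough for the hypothesis to hold.
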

    Proof of Claim \ref{claim: jsq_cont_beta} is provide in Appendix \ref{app: jsq_cont}. Now, according to the assumption, $\epsilon_n = n^{-\alpha}$, where $\alpha >1$. Under the $\alpha>1$ condition, the condition $2|\theta |  n\epsilon_n\log \big(\frac{1}{\epsilon_n} \big)  < \theta_\perp$ can be satisfied for $n$ large enough, i.e., we need $n$ large enough such that $2\alpha |\theta |  n^{1-\alpha}\log n  < \theta_\perp$, or $\frac{\log n}{ n^{\alpha-1}} < \frac{\theta_\perp}{2\alpha |\theta |}$.
    From Claim \ref{claim: jsq_cont_beta}, we have that for any $\theta <1$, (as $\theta_n \geq 1$), we have,
    \begin{align*}
        \lim_{n\rightarrow \infty} &  \frac{1}{n\epsilon_n}\sum_{i=1}^n\EE_{\pi_n}\big[  \mathds 1_{\{q_i = 0\}}\big] =1,
    \end{align*}
    and
    \begin{align*}
       \lim_{n\rightarrow \infty} &  \frac{1}{n\mu\epsilon_n}\Big|\EE_{\pi_n}[\beta_n (\q;\theta)] - \mu\sum_{i=1}^n\EE_{\pi_n}\big[  \mathds 1_{\{q_i = 0\}}\big] \Big|\leq  \lim_{n\rightarrow \infty}  \frac{1}{n\mu\epsilon_n} \sum_{i=1}^n \EE_{\pi_n} \Big[ \mathds 1_{\{q_i = 0\}} \big| e^{\theta \epsilon_n \overline q} -1\big|\Big] =  0.
    \end{align*}
    Thus, we have $\lim_{n\rightarrow \infty} \frac{1}{n\mu\epsilon_n}\EE_{\pi_n}[\beta_n (\q;\theta)] =1$. Further, for any $\theta$, we have 
    \begin{align*}
        \lim_{n\rightarrow \infty} \frac{1}{n\mu\epsilon_n} \gamma_n(\theta) &= \frac{1}{n\mu\epsilon_n} \big( n \mu -\lambda_n  e^{\epsilon_n\theta} \big) = \frac{1}{\epsilon_n} \big( 1- e^{\epsilon_n\theta}+ \epsilon_n  e^{\epsilon_n\theta} \big)=1-\theta.
    \end{align*}
    Thus, by using Lemma \ref{lem: jsq_cont_steady_state}, for any $\theta<1$,
    \begin{align*}
        \lim_{n\rightarrow \infty} \EE[e^{\theta \epsilon_n \overline q}] = \lim_{n\rightarrow \infty} \frac{1}{\gamma_n(\theta)} \EE_{\pi_n}[\beta_n (\q;\theta)] = \frac{1}{1-\theta}.
    \end{align*}
    This shows that the Moment Generating Function (MGF) of  $\epsilon_n \overline{q}$ converges to that of an exponential random variable with mean $1$. Now, by using \cite[Theorem 25.10]{Bill86}, we have that $\epsilon_n \overline{q} \stackrel{d}{\rightarrow} \Upsilon$, where $\Upsilon$ is an exponential random variable with mean $1$. 

    Next, by using Proposition \ref{prop: jsq_cont_ssc}, for $\theta\in(0,\theta_\perp)$ we have that
    \begin{align*}
    1\leq \lim_{n\rightarrow\infty} \EE_{\pi_\epsilon} \big[ e^{n \epsilon_n \theta |q_{\perp_i}|}\big] \leq  \lim_{n\rightarrow\infty} \Big(\EE_{\pi_\epsilon} \big[ e^{ \theta |q_{\perp_i}|}\big]\Big)^{n\epsilon_n} \leq \lim_{n\rightarrow\infty} \kappa_\perp^{n\epsilon_n} =1,
    \end{align*}
    where the second inequality follows by using Jensen's Inequality.
    This implies that, for all $i \in \{1,2,\dots,n\}$, we have $n \epsilon_n q_{\perp_i} \stackrel{d}{\rightarrow} 0 $. Now, by definition of $\q_\perp$, we have,  $n\epsilon_n \q = \epsilon_n \overline q \mathbf 1 +  n\epsilon_n \q_\perp$, where $\epsilon_n \overline{q} \stackrel{d}{\rightarrow} \Upsilon$ and $n \epsilon_n q_{\perp i} \stackrel{d}{\rightarrow} 0 $ as $n\rightarrow \infty$. Thus, we have $n\epsilon_n \q \stackrel{d}{\rightarrow} \Upsilon \mathbf 1$. This completes the proof.
\end{proof}


\begin{proof}[Proof of Theorem \ref{thm: jsq_cont_tail_bound}]
Note that for $\epsilon_n \leq 1/2$, we have $\theta_n \leq 2$. Thus, the condition $2|\theta|  n \epsilon_n\log \big(\frac{1}{\epsilon_n}\big)  < \theta_\perp$ is satisfied for any $\theta \in(0, \theta_n)$ whenever $\epsilon_n$ is small enough such that $4 n \epsilon_n\log \big(\frac{1}{\epsilon_n}\big)  < \theta_\perp$. Then, from Claim \ref{claim: jsq_cont_beta}, for any $\theta \in(0, \theta_n)$, we have 
  \begin{align*}
     \EE_{\pi_n} [\beta_n(\q;\theta)] & \leq \mu n\epsilon_n\big( 1-  \kappa_2 n\epsilon_n \log \epsilon_n \big),
  \end{align*}
  where, as $\theta\leq \theta_n \leq2$, we can take $\kappa_2:=\frac{4e\kappa_\perp^2}{\theta_\perp}$.
    Now, by using Markov's inequality, we have that for all $\theta\in(0,\theta_n)$,
    \begin{align}
    \label{eq: jsq_cont_thm_markovineq}
        \PP\big(\epsilon_n \overline{q} > x \big) &\leq e^{-\theta x} \EE\big[ e^{\theta \epsilon_n \sum_{i=1}^n q_i  }\big] \allowdisplaybreaks \nonumber\\
        &\leq \mu n\epsilon_n\big( 1-  \kappa_2 n\epsilon_n \log \epsilon_n\big) \times \frac{1}{\gamma_n(\theta)} e^{-\theta x} \allowdisplaybreaks \nonumber\\
        & = \mu n\epsilon_n\big( 1-  \kappa_2 n\epsilon_n \log \epsilon_n\big) \times \frac{1}{n \mu -\lambda_n  e^{\epsilon_n\theta} } e^{-\theta x}.
    \end{align}
    Next, we optimize the upper bound over the values of $\theta \in(0,\theta_n)$. By differentiation, 
    \begin{align*}
        \frac{d}{d\theta} \Big( \frac{1}{\gamma_n(\theta)} e^{-\theta x} \Big) = \frac{1}{\gamma_n^2(\theta)} \Big( -x\gamma_n(\theta) + \lambda_n \epsilon_n e^{\epsilon_n \theta} \Big)e^{-\theta x} 
    \end{align*}
    Then, the derivative is equal to zero at
    \begin{align*}
        \theta = \theta_{x,n} := \frac{1}{\epsilon_n}\log \Big( \frac{n\mu x}{\lambda_n (x + \epsilon_n)}\Big)= \frac{1}{\epsilon_n} \log \Big( \frac{x}{(1-\epsilon_n)(x + \epsilon_n)}\Big)< \theta_n,
    \end{align*}
    where the last inequality is valid for any $x >0$.
    Also, to ensure that $\theta_{x,n}>0$, we need that
    \begin{align*}
        \frac{x}{(1-\epsilon_n)(x + \epsilon_n)}>1 \implies x > 1- \epsilon_n.
    \end{align*}
    Then, by substituting $\theta = \theta_{x,n}$, we have
    \begin{align*}
        \frac{1}{n \mu -\lambda_n  e^{\epsilon_n\theta_{x,n}} } e^{-\theta_{x,n} x} &= \frac{x+\epsilon_n}{\mu n\epsilon_n} \exp\Big( -\frac{x}{\epsilon_n}\log \Big( \frac{x}{(1-\epsilon_n)(x + \epsilon_n)}\Big) \Big)\\
        & = \frac{x+\epsilon_n}{\mu n\epsilon_n} \exp\Big( -\frac{x}{\epsilon_n} \log \frac{1}{1-\epsilon_n} + \frac{x}{\epsilon_n} \log \frac{x+ \epsilon_n}{x} \Big)\\
        &\leq \frac{e(x+\epsilon_n)}{\mu n\epsilon_n} \exp\Big( -\frac{x}{\epsilon_n} \log \frac{1}{1-\epsilon_n} \Big).
    \end{align*}
    Using the above result in Eq. \eqref{eq: jsq_cont_thm_markovineq} and $x\geq \epsilon_n$, we have 
    \begin{align*}
        \PP\big(\epsilon_n \sum_{i=1}^n q_i > x \big) &\leq  2ex \big( 1-  \kappa_2 n\epsilon_n \log \epsilon_n\big)  \exp\Big( -\frac{x}{\epsilon_n} \log \frac{1}{1-\epsilon_n} \Big).
    \end{align*}
    This completes the first part of the proof. For the second, i.e., the lower bound in Eq. \eqref{eq: jsq_cont_tail_lowerbound}, we couple the JSQ system with a Single Server Queue (SSQ) as follows, i.e., we consider a SSQ process $\{q_{ssq}(t)\}_{t\geq 0}$, where all the servers of the original JSQ system are pooled together to create a single server with service rate $n\mu$. Thus, $\{q_{ssq}(t)\}_{t\geq 0}$ is an $M/M/1$ queue with arrival rate $\lambda_n$ and service rate $n\mu$. Then, the stationary distribution of $\{q_{ssq}(t)\}_{t\geq 0}$ is given by,
    \begin{align*}
        \PP(q_{ssq} = i) = \Big( 1- \frac{\lambda_n}{n\mu} \Big) \Big( \frac{\lambda_n}{n\mu} \Big)^{i} = \epsilon_n (1- \epsilon_n)^{i}.
    \end{align*}
    This gives us
    \begin{align*}
       \PP\Big(\epsilon_n \sum_{i=1}^n q_i > x\Big) &\geq   \PP(\epsilon_n q_{ssq}  > x) = (1- \epsilon_n)^{\big\lceil\frac{x}{\epsilon_n}\big\rceil} \geq (1- \epsilon_n)^{\frac{x}{\epsilon_n}-1} = \frac{1}{1-\epsilon_n} \exp\Big( -\frac{x}{\epsilon_n} \log \frac{1}{1-\epsilon_n} \Big).
    \end{align*}
    This completes the proof.
\end{proof}

\section{Discrete time Single server queue}
\label{sec: ssq}
In this section, we present the bound on the tail probability for a SSQ. The aim is to show the affect of a general arrival and service distribution on the decay rate of the tail probability $\PP(\epsilon q>x)$. The model is as follows.

\subsection{Model for Single Server Queue}
\label{sec: ssq_model}
We consider a sequence of discrete-time SSQ indexed by the HT parameter $\epsilon$, where $q(t)$ denotes the number of customers in the system (queue length) at the beginning of time slot $t$. Customers arrive to the system as an i.i.d. process $\{a(t)\}_{t\geq 0}$, with $\EE [a(t)] = \lambda_\epsilon$, Var$(a(t)) = \sigma_{\epsilon,a}^2$, and $a(t)\leq A$ almost surely. Once the customers join the queue, the server decides to serve up to $s(t)$ jobs waiting in the queues, with $\EE[s(t)] = \mu_\epsilon$, Var$(s(t)) = \sigma_{\epsilon,s}^2$, and $s(t)\leq A$ almost surely. Even though the server can process $s(t)$ customers, it is possible that there are not enough customers in the system, and some of the service gets wasted. Hence, we call $s(t)$ as the `potential' service. Similar to the arrival process, the potential services are also independent and identically distributed across time slots. Moreover, we assume that these potential services are independent of the queue length vector. 
The queue evolution process is given by
\begin{equation}
\label{eq: ssq_lindley}
    q(t+1) = [q(t) +a(t) - s(t)]^+ = q(t) +a(t) - s(t) + u(t) ,
\end{equation}
where $[x]^+:=\max\{x,0\}$ is used because the queue length cannot be negative. The term ${u}(t)$ is the unused service and represents the difference between the potential and actual service. By definition, the unused  service $u(t) $ is positive only if $q(t+1) =0$, which implies $q(t+1)u(t) =0$ for all $t$. Also, the  unused service cannot be higher than the potential, and so, $0\leq u(t)\leq s(t) \leq A$ almost surely.  

It is well known that $\lambda_\epsilon < \mu_\epsilon$ implies stability, that is, the Markov chain $\{q(t)\}_{t\geq 0}$ is positive recurrent. As a result, the system reaches steady state. We use $\pi_\epsilon$ to denote the steady state distribution of the queue length process $\{q(t)\}_{t\geq 0}$. We drop the symbol $t$ to denote that variables in steady state, i.e., $q$ follows the steady state distribution $\pi_\epsilon$, and $q^+$ denote the state that comes after $q$, i.e., $q^+ = [q+a-s]^+= q + a-s+u$, where $a$ and ${s}$ follow the same distribution as $a(t)$ and ${ s}(t)$, respectively.

\subsection{Results for SSQ}

In this section, we establish an upper bound on the tail of the queue length distribution for a system in non-asymptotic  HT. We consider $\lambda_\epsilon = \mu_\epsilon(1-\epsilon)$, so that as $\epsilon \rightarrow 0$, the system approaches HT conditions. In Theorem \ref{thm: ssq_tail_bound}, we provide a bound on the probability $\mathbb P (\epsilon q > x)$ when the HT parameter is bounded away from zero.

\begin{theorem}
\label{thm: ssq_tail_bound}
     Consider the SSQ as defined in Section \ref{sec: ssq_model}, with $\lambda_\epsilon = \mu_\epsilon(1-\epsilon)$. Let $\sigma_\epsilon^2 := \sigma_{\epsilon,a}^2+\sigma_{\epsilon,s}^2$,
     \begin{align*}
       \theta_\epsilon := \frac{2\mu_\epsilon}{\sigma_\epsilon^2}, &&  \kappa_1:=\frac{ 2\mu_\epsilon E_3}{3\sigma_\epsilon^4}  + \epsilon \frac{9 \mu_\epsilon^2 A^4}{\sigma_\epsilon^6},
     \end{align*}
     where $E_3:= \max\{0, \EE[(a-s)^3]\}$. Let $\EE[e^{\epsilon \theta q(0)}]<\infty$ for all $\theta \leq \theta_\epsilon$. Then, for all $x> \frac{\theta_\epsilon}{ (1+ \kappa_1 \epsilon)}$, we have
     \begin{align}
         \label{eq: ssq_tail_bound}
          \mathbb P (\epsilon q > x) \leq e\theta_\epsilon x  e^{-\theta_\epsilon (1-\kappa_1 \epsilon)x}, 
         \ \ \ \ \forall \epsilon \in (0,1).
     \end{align}
     Further, as $x\rightarrow \infty$, we have 
     \begin{align}
     \label{eq: ssq_large_deviation}
         \lim_{x\rightarrow \infty} \frac{1}{x} \log \mathbb P (\epsilon q > x) \leq \ - \theta_\epsilon (1 - \kappa_1\epsilon 
         ).
     \end{align}
     
\end{theorem}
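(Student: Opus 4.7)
The plan is to follow the three-step recipe from Section \ref{sec:intro-methodology}: (i) use the exponential test function $V(q) = e^{\theta\epsilon q}$ together with the steady-state identity $\EE_{\pi_\epsilon}[V(q^+) - V(q)] = 0$ to derive an upper bound on the MGF $M(\theta) := \EE[e^{\theta\epsilon q}]$ for $\theta$ up to a slight perturbation of $\theta_\epsilon$; (ii) apply Markov's inequality $\PP(\epsilon q > x) \leq e^{-\theta x} M(\theta)$; and (iii) optimize over $\theta$. The large-deviation statement \eqref{eq: ssq_large_deviation} will then follow immediately from the tail bound \eqref{eq: ssq_tail_bound} by taking logarithms and dividing by $x$.

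The core computation decomposes $V(q^+) - V(q)$ via the Lindley recursion $q^+ = q + a - s + u$:
$$V(q^+) - V(q) = V(q)\bigl(e^{\theta\epsilon(a-s)} - 1\bigr) + \bigl[V(q^+) - e^{\theta\epsilon(q + a - s)}\bigr].$$
The bracketed term vanishes off $\{u>0\}$, and on $\{u>0\}$ the boundary condition $q^+ u = 0$ forces $q + a - s = -u$, so the bracket equals $1 - e^{-\theta\epsilon u}$. Taking expectations, using the independence of $(a,s)$ and $q$ in steady state, and rearranging yields
$$M(\theta) \,=\, \frac{\EE\bigl[(1 - e^{-\theta\epsilon u})\mathds{1}_{\{u>0\}}\bigr]}{1 - \EE[e^{\theta\epsilon(a-s)}]}.$$
The numerator is at most $\theta\epsilon\,\EE[u] = \theta\mu_\epsilon\epsilon^2$ via $1 - e^{-x} \leq x$ and the identity $\EE[u] = \mu_\epsilon - \lambda_\epsilon = \mu_\epsilon\epsilon$. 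For the denominator I would Taylor expand $e^{\theta\epsilon(a-s)}$ to third order with a Lagrange remainder that exploits $|a-s|\leq A$; substituting $\EE[a-s] = -\mu_\epsilon\epsilon$, $\EE[(a-s)^2] = \sigma_\epsilon^2 + \mu_\epsilon^2\epsilon^2$, and $\EE[(a-s)^3] \leq E_3$, and absorbing all sub-leading contributions into $\kappa_1\epsilon$, yields
$$1 - \EE[e^{\theta\epsilon(a-s)}] \,\geq\, \theta\epsilon^2\mu_\epsilon\left[1 - \frac{\theta}{\theta_\epsilon(1-\kappa_1\epsilon)}\right],$$
whence $M(\theta) \leq [1 - \theta/(\theta_\epsilon(1 - \kappa_1\epsilon))]^{-1}$.

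Applying Markov's inequality and optimizing over $\theta$ by the same algebra as at the end of the proof of Theorem \ref{thm: jsq_cont_tail_bound} (essentially Lemma \ref{lem: mgf_to_prob_bound}) produces the tail bound, after bounding $\theta_\epsilon(1-\kappa_1\epsilon) \leq \theta_\epsilon$ in the polynomial pre-factor; the restriction $x > \theta_\epsilon/(1+\kappa_1\epsilon)$ is exactly what is needed to keep the optimizing $\theta$ strictly positive. The principal obstacle is the clean extraction of the two components of $\kappa_1$: the term $\tfrac{2\mu_\epsilon E_3}{3\sigma_\epsilon^4}$ emerges from the cubic contribution of the Taylor expansion divided by the leading $\theta\mu_\epsilon\epsilon^2$ piece of the numerator, while $\epsilon\tfrac{9\mu_\epsilon^2 A^4}{\sigma_\epsilon^6}$ arises from the fourth-order Lagrange remainder together with the subleading $\mu_\epsilon^2\epsilon^2$ piece of $\EE[(a-s)^2]$; keeping all constants aligned while choosing the range of $\theta$ so that $\theta\epsilon A$ stays bounded (allowing any $e^{\theta\epsilon A}$ factor in the remainder to be absorbed into these constants) is the most delicate part. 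A brief preliminary argument is also needed to propagate the hypothesis $\EE[e^{\epsilon\theta q(0)}] < \infty$ to stationarity so that $M(\theta)$ is finite and the above identity is well-defined.
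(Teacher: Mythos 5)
Your proposal follows the paper's approach essentially verbatim: the same exponential test function and Lindley-recursion decomposition give the MGF identity of Lemma~\ref{lem: ssq_mgf_steady_state}, the same bound $1-\EE[e^{-\theta\epsilon u}]\le \theta\mu_\epsilon\epsilon^2$ via the drift balance $\EE[u]=\mu_\epsilon\epsilon$ is Claim~\ref{claim: ssq_uterm_bound}, the same third-order Taylor expansion of $\EE[e^{\theta\epsilon(a-s)}]$ with a bounded Lagrange remainder is Claim~\ref{claim: ssq_aminussterm_bound}, and the final step is Markov's inequality with optimization over $\theta$ (Lemma~\ref{lem: mgf_to_prob_bound}). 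The only cosmetic deviation is that you write the MGF bound as $[1-\theta/(\theta_\epsilon(1-\kappa_1\epsilon))]^{-1}$ whereas the paper obtains $[1-(\theta/\theta_\epsilon)(1+\kappa_1\epsilon)]^{-1}$; since $1/(1-\kappa_1\epsilon)\ge 1+\kappa_1\epsilon$ your version is a marginally looser intermediate bound but still suffices to conclude the stated inequality, and both identify the same exponent $\theta_\epsilon(1-\kappa_1\epsilon)$. You correctly flag the one step you leave implicit — propagating $\EE[e^{\theta\epsilon q(0)}]<\infty$ to stationarity so that the drift identity is justified — which the paper handles through the finite-time recursion of Lemma~\ref{lem: ssq_mgf_steady_state} and the convergence argument in Lemma~\ref{lem: steady_state_lyapunov}; this must be made rigorous (one cannot simply assume the steady-state MGF is finite a priori), but the required argument is exactly the one you gesture at.
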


The exponential decay of the tail of the queue-length distribution of an SSQ has been established by the tail bound presented in Eq. \eqref{eq: ssq_tail_bound}. This result is consistent with the well-known fact that the distribution of the scaled steady-state queue length $\epsilon q$ converges to an exponential distribution as $\epsilon$ tends to zero. However, our result provides a more detailed insight by characterizing the rate of convergence of the tail probability to its corresponding HT value.

Note that, SSQ is a one-dimensional system, and so, there is no concept of SSC in SSQ. Further, in case of SSQ, the pre-exponent error is given by $e\theta_\epsilon x$. The reasoning for this is same as that in the case of JSQ system. Therefore, we only discuss the pre-limit tail for the discrete time SSQ.
\vspace{0.5em}


 

\textbf{Pre-limit tail:} 
As mentioned before, pre-limit tail refers to the actual decay rate of the tail probability for the pre-limit system. Under general arrival and service distribution, as in the case of discrete time SSQ, we provide an upper bound on the pre-limit tail, which given by $\theta_\epsilon (1-\kappa_1 \epsilon)$. The steady-state distribution of the pre-limit system need not follow an exponential distribution. As such, to obtain an exponential decay rate on the tail probability, we employ a second order approximation to approximate the distribution with an exponential. It is important to note that as $\epsilon$ approaches zero, the upper bound on the pre-limit tail converges to the correct HT value, i.e. $\lim_{\epsilon \rightarrow 0} \theta_\epsilon (1-\kappa_1 \epsilon) = \frac{2\mu}{\sigma^2}$, where $\mu$ and $\sigma$ are the limiting values of $\mu_\epsilon$ and $\sigma_\epsilon$ as $\epsilon \rightarrow 0$.


Note that the constant $\kappa_1$ in the pre-limit tail depends on the third moment of $(a-s)$, while the actual decay rate of the tail probability in HT depends on the corresponding first and the second moment. Using exponential Lyapunov functions, the HT distribution of a queueing system is generally obtained using a second-order approximation of the MGF of $(a-s)$, which involves the first two moments. As such, the `strength' of this second-order approximation depends on the third moment of $(a-s)$, which is essentially captured by the dependency of the constant $\kappa_1$ on $E_3$ in Theorem \ref{thm: ssq_tail_bound}.

\subsection{Proof of Theorem \ref{thm: ssq_tail_bound}}

The first step to prove Theorem \ref{thm: ssq_tail_bound} is characterizing the steady-state distribution of $\epsilon q$ in terms of its MGF. For this goal, we consider the Lyapunov function $V(x;\theta) := e^{\theta \epsilon x}$ and obtain the following result. 


\begin{lemma}
\label{lem: ssq_mgf_steady_state}
    Consider an SSQ as defined in Section \ref{sec: ssq_model}. Suppose the initial state $q(0)$ satisfies $\EE[e^{\theta \epsilon q(0)}] <\infty$ for all $\theta < \theta_0$ and for all $\epsilon \in (0,1)$. Then, for any $\theta <\theta_0$, we have 
\begin{align}
\label{eq: ssq_mgf_recursiveeq}
    \EE[V(q(t+1);\theta)]
    & = (1-\gamma_\epsilon(\theta))^{t+1}\EE[V(q(0);\theta)]  + \sum_{i=0}^t (1-\gamma_\epsilon(\theta))^i \EE[\beta_\epsilon (t-i;\theta)],
\end{align}
where $\gamma_\epsilon(\theta) := 1-\EE \big[e^{\epsilon\theta(a-s)}\big]$ and $\beta_\epsilon (t;\theta) := 1- \EE\big[e^{-\theta \epsilon u(t)} \big| q(t) \big].$
Let $ \Theta_\epsilon :=\{\theta \in \RR: \theta< \theta_0, \ \EE[e^{\theta \epsilon(a-s)}] < 1\}$. Then, for all $\theta\in \Theta_\epsilon \cup \{\theta: \theta\leq 0\}$, we have
    \begin{align}
    \label{eq: ssq_mfg_closed_form}
       \EE_{\pi_\epsilon}[V(q;\theta)]:= \EE_{\pi_\epsilon}\big[e^{\theta \epsilon q}\big] = \frac{1- \EE_{\pi_\epsilon}[e^{-\theta \epsilon u}]}{1-\EE \big[e^{\epsilon\theta(a-s)}\big]}.
    \end{align}
\end{lemma}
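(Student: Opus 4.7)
The plan is to derive a one-step recursion for $\EE[V(q(t+1); \theta) \mid q(t)]$ directly from the Lindley representation, iterate it, and then specialize to stationarity by invariance. The core is a short algebraic identity that exploits the complementary-slackness condition $q(t+1)\,u(t) = 0$, which is built into the definition of the unused service.

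Starting from $q(t+1) = q(t) + a(t) - s(t) + u(t)$, I would first write
\begin{equation*}
    e^{\theta\epsilon q(t+1)} - e^{\theta\epsilon(q(t)+a(t)-s(t))}
    = e^{\theta\epsilon q(t+1)}\bigl(1 - e^{-\theta\epsilon u(t)}\bigr).
\end{equation*}
Now $q(t+1)u(t) = 0$: on $\{u(t) > 0\}$ we have $q(t+1) = 0$, so $e^{\theta\epsilon q(t+1)} = 1$; on $\{u(t) = 0\}$ the factor $1 - e^{-\theta\epsilon u(t)}$ is zero. In either case the right-hand side collapses to $1 - e^{-\theta\epsilon u(t)}$, yielding the clean identity
\begin{equation*}
    V(q(t+1);\theta)
    = e^{\theta\epsilon(q(t)+a(t)-s(t))} + \bigl(1 - e^{-\theta\epsilon u(t)}\bigr).
\end{equation*}
This is the only non-routine step in the proof.

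Conditioning on $q(t)$ and using independence of $(a(t),s(t))$ from $q(t)$ turns this into a one-step recursion
\begin{equation*}
    \EE\bigl[V(q(t+1);\theta)\mid q(t)\bigr] = \bigl(1 - \gamma_\epsilon(\theta)\bigr)\,V(q(t);\theta) + \beta_\epsilon(t;\theta),
\end{equation*}
with $\gamma_\epsilon, \beta_\epsilon$ exactly as defined in the lemma. Taking total expectations and unrolling $t+1$ times gives the geometric-sum formula Eq.~\eqref{eq: ssq_mgf_recursiveeq}. The initial term $(1-\gamma_\epsilon(\theta))^{t+1}\EE[V(q(0);\theta)]$ is finite by the MGF assumption on $q(0)$, and the $\beta_\epsilon$ terms are uniformly bounded since $u(t) \in [0,A]$ almost surely, so every quantity in Eq.~\eqref{eq: ssq_mgf_recursiveeq} is well-defined.

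For the steady-state formula I would invoke invariance: under $\pi_\epsilon$, $q(t+1)\stackrel{d}{=}q(t)$, so taking $\EE_{\pi_\epsilon}$ of the one-step recursion and cancelling yields
\begin{equation*}
    \gamma_\epsilon(\theta)\,\EE_{\pi_\epsilon}[V(q;\theta)] = \EE_{\pi_\epsilon}[\beta_\epsilon(\theta)] = 1 - \EE_{\pi_\epsilon}[e^{-\theta\epsilon u}],
\end{equation*}
which is exactly Eq.~\eqref{eq: ssq_mfg_closed_form}. The principal obstacle — really the only delicate bookkeeping in the argument — is verifying finiteness of $\EE_{\pi_\epsilon}[V(q;\theta)]$ so that the cancellation above is legitimate. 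For $\theta \leq 0$ this is automatic because $V \leq 1$. For $\theta \in \Theta_\epsilon$ one has $1 - \gamma_\epsilon(\theta) = \EE[e^{\theta\epsilon(a-s)}] \in (0,1)$, so letting $t\to\infty$ in Eq.~\eqref{eq: ssq_mgf_recursiveeq} makes the initial-state contribution vanish and reduces the geometric sum to $\EE_{\pi_\epsilon}[\beta_\epsilon(\theta)]/\gamma_\epsilon(\theta)$, giving the same closed form and a posteriori confirming finiteness of the steady-state MGF. The excluded case $\gamma_\epsilon(\theta) = 0$ does not arise on $\Theta_\epsilon \cup \{\theta\leq 0\}$, so Eq.~\eqref{eq: ssq_mfg_closed_form} holds throughout this range.
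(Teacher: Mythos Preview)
Your proof is correct and follows essentially the same route as the paper: exploit $q(t+1)u(t)=0$ to turn the Lindley recursion into the one-step identity $\EE[V(q(t+1);\theta)\mid q(t)] = (1-\gamma_\epsilon(\theta))V(q(t);\theta) + \beta_\epsilon(t;\theta)$, iterate for the finite-time formula, and then pass to stationarity by letting $t\to\infty$ (the paper packages this last step as an application of a general drift lemma, but the content is identical). The only place where you are slightly lighter than the paper is the finiteness of $\EE[V(q(t);\theta)]$ at each finite $t$, which the paper dispatches via the crude bound $q(t)\le q(0)+At$; this is a one-line addition.
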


The expression in Eq. \eqref{eq: ssq_mfg_closed_form} was first introduced in \cite{hurtado2020transform} for $\theta$ in an interval around zero. In this work, we have expanded the range of validity for Eq. \eqref{eq: ssq_mfg_closed_form} to a much larger interval given by $\Theta_\epsilon$. Although it may be difficult to fully characterize $\Theta_\epsilon$ for a fixed $\epsilon>0$, we can use Taylor's approximation of the exponential function to obtain a close enough subset. Accurately characterizing $\Theta_\epsilon$ is crucial to obtaining the correct tail bounds.  Our finite-time characterization of the queue-length distribution, as shown in Eq. \eqref{eq: ssq_mgf_recursiveeq}, allows us to achieve this expanded range of validity.

The proof of Lemma \ref{lem: ssq_mgf_steady_state} follows by showing that, for any $\theta \in \Theta_\epsilon$, the right-hand side (RHS) in Eq. \eqref{eq: ssq_mgf_recursiveeq} converges to a finite value. The details of the proof of Lemma \ref{lem: ssq_mgf_steady_state} are presented in  Appendix \ref{app: ssq}.

\begin{lemma}
    \label{lem: mgf_to_prob_bound}
    Suppose $X$ is a non-negative random variable, and the MGF of $X$ satisfies the inequality $\EE[e^{\theta X}] \leq \frac{1}{1- \theta/\lambda}, \forall \theta \in (0,\lambda)$,
    where $\lambda>0$. Then, for any $x > \frac{1}{\lambda}$, we have 
    \begin{align*}
        \PP(X>x) \leq e\lambda x e^{- \lambda x}.
    \end{align*}
\end{lemma}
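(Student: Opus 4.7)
The plan is a standard Chernoff-type argument: apply Markov's inequality to $e^{\theta X}$ for an arbitrary $\theta \in (0,\lambda)$ and then optimize over $\theta$. Specifically, for any such $\theta$, Markov's inequality together with the hypothesized MGF bound gives
$$\PP(X > x) \;=\; \PP\bigl(e^{\theta X} > e^{\theta x}\bigr) \;\leq\; e^{-\theta x}\, \EE[e^{\theta X}] \;\leq\; \frac{e^{-\theta x}}{1 - \theta/\lambda}.$$
So the problem reduces to minimizing $f(\theta) := e^{-\theta x}/(1-\theta/\lambda)$ over $\theta \in (0,\lambda)$.

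Next I would solve the one-dimensional optimization explicitly. Setting $f'(\theta) = 0$ reduces to $-x(1-\theta/\lambda) + 1/\lambda = 0$, which has unique solution $\theta^\star = \lambda - 1/x$. The hypothesis $x > 1/\lambda$ is exactly what is needed to ensure $\theta^\star \in (0,\lambda)$, so this optimizer is admissible (and convexity of $\log f$ on $(0,\lambda)$ confirms it is the global minimum on the feasible range). Substituting $\theta^\star$ back yields $1 - \theta^\star/\lambda = 1/(\lambda x)$ and $e^{-\theta^\star x} = e \cdot e^{-\lambda x}$, so
$$f(\theta^\star) \;=\; \frac{e\, e^{-\lambda x}}{1/(\lambda x)} \;=\; e\lambda x\, e^{-\lambda x},$$
which is the claimed bound.

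There is no real obstacle here; the only subtlety is the admissibility of the optimizer, which is precisely why the assumption $x > 1/\lambda$ appears in the statement. For $x \leq 1/\lambda$ the optimal $\theta^\star$ would be nonpositive, and the best Markov-inequality bound one can obtain from $\theta \downarrow 0$ is the trivial bound $\PP(X > x) \leq 1$, consistent with $e\lambda x\, e^{-\lambda x} \geq 1$ in that regime anyway.
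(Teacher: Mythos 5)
Your proposal is correct and follows essentially the same argument as the paper: apply Markov's inequality to $e^{\theta X}$, use the hypothesized MGF bound, and then optimize the resulting bound over $\theta \in (0,\lambda)$, with the optimizer $\theta^\star = \lambda - 1/x$ lying in the admissible range precisely because $x > 1/\lambda$.
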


The proof of Lemma \ref{lem: mgf_to_prob_bound} is provided in Appendix \ref{app: essential_lemmas}.



\begin{proof}[Proof of Theorem \ref{thm: ssq_tail_bound}]
By equating the drift of the queue length $q(t)$ to zero in steady state, we prove the following claim.

\begin{claim} 
\label{claim: ssq_uterm_bound}
For any $\theta \in \RR$, 
$1- \EE_{\pi_\epsilon}[e^{-\theta \epsilon u}] \leq \epsilon^2 \theta \mu_\epsilon.$
\end{claim}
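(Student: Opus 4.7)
The plan is to use the steady-state balance of the queue length together with a single elementary pointwise inequality on the exponential.

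First, I would identify $\EE_{\pi_\epsilon}[u]$. In steady state, $q$ and $q^+$ have the same distribution, and since arrivals and services are bounded by $A$, the increment $q^+ - q = a - s + u$ is bounded (hence integrable). Taking expectations in $q^+ = q + a - s + u$ and using stationarity yields $\EE_{\pi_\epsilon}[a - s + u] = 0$. Substituting $\EE[a] = \lambda_\epsilon = \mu_\epsilon(1-\epsilon)$ and $\EE[s] = \mu_\epsilon$, I get
\begin{equation*}
    \EE_{\pi_\epsilon}[u] = \mu_\epsilon - \lambda_\epsilon = \epsilon \mu_\epsilon.
\end{equation*}

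Next, I would invoke the elementary inequality $1 - e^{-y} \leq y$, which holds for every $y \in \RR$ (it is just the convexity bound $e^{-y} \geq 1 - y$). Applying this pointwise to $y = \theta \epsilon u$ (valid for any $\theta \in \RR$ and any $u \geq 0$) gives
\begin{equation*}
    1 - e^{-\theta \epsilon u} \leq \theta \epsilon u.
\end{equation*}
Taking expectation under $\pi_\epsilon$ and plugging in the identity from the first step yields
\begin{equation*}
    1 - \EE_{\pi_\epsilon}\bigl[e^{-\theta \epsilon u}\bigr] \leq \theta \epsilon \,\EE_{\pi_\epsilon}[u] = \theta \epsilon^2 \mu_\epsilon,
\end{equation*}
which is exactly the claim.

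There is essentially no obstacle here: the only subtle point is that the inequality $1 - e^{-y} \leq y$ must be read as a two-sided bound (when $\theta < 0$, both sides are non-positive, and convexity of $e^{-y}$ still makes the inequality go in the right direction). The only other thing worth double-checking is that the steady-state exchange $\EE_{\pi_\epsilon}[q^+ - q] = 0$ is legitimate — but the bounded increments give this immediately without requiring $\EE_{\pi_\epsilon}[q] < \infty$ up front.
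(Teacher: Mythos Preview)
Your proposal is correct and matches the paper's proof essentially line for line: the paper also equates the steady-state drift of $q$ to zero to obtain $\EE_{\pi_\epsilon}[u]=\epsilon\mu_\epsilon$, and then applies the convexity bound $e^{y}\ge 1+y$ (equivalently $1-e^{-y}\le y$) to conclude. Your added remarks on the validity for $\theta<0$ and on bounded increments are sound and make the argument slightly more self-contained than the paper's version.
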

Next, using Taylor's expansion of the exponential function we obtain the following claim.
\begin{claim}
\label{claim: ssq_aminussterm_bound}
    For any $\theta \in \Big(0, \frac{\theta_\epsilon}{(1+ \kappa_1 \epsilon )}\Big)$, $1- \EE \big[e^{\epsilon\theta(a-s)}\big] \geq  
        \theta\epsilon^2 \mu_\epsilon \Big(1 -  \frac{\theta}{\theta_\epsilon} \big(1+\kappa_1 \epsilon\big) \Big).$
\end{claim}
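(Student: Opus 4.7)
The plan is to establish Claim \ref{claim: ssq_aminussterm_bound} by a careful Taylor expansion of the moment generating function $\EE\bigl[e^{\epsilon\theta(a-s)}\bigr]$ around the origin, keeping the first two moments exact and absorbing all higher-order corrections into the $\kappa_1 \epsilon$ term. First I would write
\begin{align*}
    \EE\bigl[e^{\epsilon\theta(a-s)}\bigr] = 1 + \epsilon\theta\,\EE[a-s] + \frac{(\epsilon\theta)^2}{2}\EE[(a-s)^2] + \sum_{k\geq 3}\frac{(\epsilon\theta)^k}{k!}\EE[(a-s)^k],
\end{align*}
substitute $\EE[a-s]=\lambda_\epsilon-\mu_\epsilon=-\epsilon\mu_\epsilon$ from $\lambda_\epsilon=\mu_\epsilon(1-\epsilon)$, and use independence of $a$ and $s$ to get $\EE[(a-s)^2]=\sigma_\epsilon^2+\epsilon^2\mu_\epsilon^2$. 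Rearranging then yields
\begin{align*}
    1-\EE\bigl[e^{\epsilon\theta(a-s)}\bigr] = \theta\epsilon^2\mu_\epsilon\Bigl(1-\tfrac{\theta}{\theta_\epsilon}\Bigr) - \frac{\epsilon^4\theta^2\mu_\epsilon^2}{2} - \sum_{k\geq 3}\frac{(\epsilon\theta)^k}{k!}\EE[(a-s)^k].
\end{align*}
The first summand already matches the target lower bound when $\kappa_1=0$, so the task reduces to showing the remaining two residuals are collectively dominated by $\theta\epsilon^2\mu_\epsilon\cdot\tfrac{\theta}{\theta_\epsilon}\kappa_1\epsilon=\tfrac{\theta^2\epsilon^3\sigma_\epsilon^2\kappa_1}{2}$, which is precisely the gap between the two expressions.

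Next I would split the higher-order correction into the third-moment piece and the tail $k\geq 4$. For the third-order term, bound $\EE[(a-s)^3]\leq E_3$ by definition of $E_3$ (which handles the sign automatically), and use the hypothesis $\theta<\theta_\epsilon$ once to trade a factor of $\theta$ for $\theta_\epsilon=2\mu_\epsilon/\sigma_\epsilon^2$:
\begin{align*}
    \frac{(\epsilon\theta)^3 E_3}{6}\leq \frac{\theta_\epsilon\,\theta^2\epsilon^3 E_3}{6}=\frac{\theta^2\epsilon^3\mu_\epsilon E_3}{3\sigma_\epsilon^2},
\end{align*}
which is exactly the contribution of the first summand of $\kappa_1$ to $\tfrac{\theta^2\epsilon^3\sigma_\epsilon^2\kappa_1}{2}$. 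For $k\geq 4$, I would exploit the a.s.\ bound $|a-s|\leq A$ in the form $|a-s|^k\leq A^{k-2}(a-s)^2$, so that $|\EE[(a-s)^k]|\leq A^{k-2}(\sigma_\epsilon^2+\epsilon^2\mu_\epsilon^2)$ and the residual becomes a geometric tail in $y:=\epsilon\theta A$. Using the elementary inequality $(j+4)!\geq 4!\,j!$ gives $\sum_{k\geq 4} y^k/k!\leq y^4 e^y/24$, so the tail is $O\bigl(\epsilon^4\theta^4 A^2(\sigma_\epsilon^2+\epsilon^2\mu_\epsilon^2)\bigr)$. Applying $\theta\leq\theta_\epsilon=2\mu_\epsilon/\sigma_\epsilon^2$ once more converts the excess $\theta^2$ into $\mu_\epsilon^2/\sigma_\epsilon^4$, matching the $9\mu_\epsilon^2 A^4/\sigma_\epsilon^6$ structure of the second summand of $\kappa_1$; the leftover $\epsilon^4\theta^2\mu_\epsilon^2/2$ piece has the same shape and gets absorbed into the same block.

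The main obstacle will be the bookkeeping in that last step, namely arranging the sum of the $k\geq 4$ tail together with the $\epsilon^4\theta^2\mu_\epsilon^2/2$ term to fit inside the explicit constant $9$ and the exact powers $\mu_\epsilon^2 A^4/\sigma_\epsilon^6$ that define $\kappa_1$. In particular, the factor $e^y$ produced by the tail-summation estimate must be controlled by a universal constant, which holds in the heavy-traffic regime where $\epsilon$ is small (the $9$ provides exactly the slack needed). Once this is verified, the claim follows by collecting terms and reading off the definition of $\kappa_1$; everything else is direct moment arithmetic using only $\lambda_\epsilon=\mu_\epsilon(1-\epsilon)$, $|a|,|s|\leq A$ a.s., and the independence of $a$ and $s$.
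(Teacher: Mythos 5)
Your first two steps mirror the paper's proof of Claim \ref{claim: ssq_aminussterm_bound}: a Taylor expansion of $\EE\big[e^{\epsilon\theta(a-s)}\big]$, the moment substitutions $\EE[a-s]=-\epsilon\mu_\epsilon$ and $\EE[(a-s)^2]=\sigma_\epsilon^2+\epsilon^2\mu_\epsilon^2$, and trading a factor of $\theta$ for $\theta_\epsilon$ to match the $E_3$ contribution to $\kappa_1$. The divergence is in how the higher-order remainder is controlled, and this is where your proposal has a genuine gap. The paper stops at third order and bounds the remainder by $(\epsilon\theta A)^4\,\EE\big[e^{\epsilon\theta(a-s)}\big]$ using $|a-s|\leq A$, deliberately making the bound self-referential: solving the resulting inequality for $\EE\big[e^{\epsilon\theta(a-s)}\big]$ first gives $\EE\big[e^{\epsilon\theta(a-s)}\big]<1$ on the target interval, and that conclusion is then fed back to replace the remainder by the clean quantity $(\epsilon\theta A)^4$. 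The whole point of this bootstrap is that one never needs to estimate $e^{\epsilon\theta A}$ by an absolute constant. Your alternative of summing the full $k\geq 4$ tail with $|a-s|^k\leq A^{k-2}(a-s)^2$ necessarily produces the factor $e^{y}$ with $y=\epsilon\theta A$, which you then assert ``is controlled by a universal constant, which holds in the heavy-traffic regime where $\epsilon$ is small.''

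That justification does not work. Theorem \ref{thm: ssq_tail_bound}, and hence this claim, is stated for every $\epsilon\in(0,1)$, so ``small $\epsilon$'' is not a hypothesis you may invoke. A uniform bound $y=O(1)$ on $\theta\in\big(0,\tfrac{\theta_\epsilon}{1+\kappa_1\epsilon}\big)$ does in fact hold, but for a structural reason you did not identify: the $\epsilon^2$ term $9\mu_\epsilon^2A^4/\sigma_\epsilon^6$ in $\kappa_1$ forces, via AM--GM, $\sigma_\epsilon^2(1+\kappa_1\epsilon)\geq 6\epsilon\mu_\epsilon A^2/\sigma_\epsilon$, so $y<\sigma_\epsilon/(3A)$; combining this with $\sigma_\epsilon\leq A/\sqrt{2}$ and $\mu_\epsilon\leq A$ (both consequences of $0\leq a,s\leq A$ a.s.) yields $y< 1/(3\sqrt{2})$, independently of $\epsilon$. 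Without this observation your final step does not close. Even with it, you would still need the nontrivial bookkeeping to fit the resulting constants --- the $e^{y}$ factor together with the leftover $\tfrac12\epsilon^4\theta^2\mu_\epsilon^2$ term --- inside the fixed coefficient $9$ in $\kappa_1$, which is precisely the accounting the paper's bootstrap is engineered to avoid.
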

The proofs of Claims \ref{claim: ssq_uterm_bound} and \ref{claim: ssq_aminussterm_bound} are provided in  Appendix \ref{app: ssq}.
Next, we use the Lemma \ref{lem: ssq_mgf_steady_state}. Note that, by the assumption $\EE[e^{\theta \epsilon q(0)}] < \infty$ for $\theta<\theta_\epsilon$, we have that $\Big(0, \frac{\theta_\epsilon}{(1+ \kappa_1 \epsilon )}\Big) \subseteq \Theta_\epsilon$. Then, by Lemma \ref{lem: ssq_mgf_steady_state} and the above mentioned claims, we have 
\begin{align*}
         \EE \big[e^{\epsilon\theta q}\big]\leq \Big( 1 - \frac{\theta}{\theta_\epsilon} \Big(1+ \kappa_1 \epsilon \Big) \Big)^{-1}, \quad \forall \theta \in \Big(0, \frac{\theta_\epsilon}{(1+ \kappa_1 \epsilon )}\Big).
\end{align*}
Afterwards, the result in Theorem \ref{thm: ssq_tail_bound} follows simply by using Markov's inequality as shown in Lemma \ref{lem: mgf_to_prob_bound}.
\end{proof}



\section{$M/M/n$ system}
\label{sec: mmn}

\subsection{Model}
\label{sec: mmn_model}

An $M/M/n$ system (also known as Erlang-C) is a multi-server queue with $n$ servers, a single queue, and follows a continuous-time first-come-first-serve service discipline. Customer arrivals follow a Poisson process with rate $\lambda_n$, and each server has an exponentially distributed service time with constant service rate $\mu$. The rate $\mu$ is independent of the system size; thus, the subscript $n$ is omitted. 

To represent the system's dynamics, we use $q_n(t)$ to denote the number of customers in the system at time $t$. We use $w_n(t) = [q_n(t) -n]^+$ to denote the number of customers waiting in the queue, and $r_n(t) = [n-q_n(t)]^+$ for the number of idle servers at time $t$. 


The queue-length process $\{q_n(t)\}_{t\geq 0}$ is a CTMC, and it is well known that it is stable (positive recurrent) if $\lambda_n<n\mu$. To be consequent with the previous sections, we assume that $\lambda_n =  n\mu \left(1-\epsilon_n\right)$. Further, a stationary distribution $\pi_n$ exists under this stability condition. Similarly to previous sections, we drop the index $t$ to denote steady-state variables. 


Similarly to the JSQ system, to study the Many-Server-HT asymptotics of this system, we define the load  $\rho_n:=\frac{\lambda_n}{n\mu} = 1- \epsilon_n$ and consider $\epsilon_n = cn^{-\alpha}$, where $c>0$ is a constant, and $\alpha>0$ a parameter (as in the JSQ system). In this case, we are interested in $\alpha\in(0,1]$. 

As explained in the introduction, the steady-state dynamics of the system vary with the value of $\alpha$. We study three regimes: (i) \textit{Sub-HW}, where $\alpha  \in \Big(0, \frac{1}{2}\Big)$, (ii)  \textit{HW}, where $\alpha  =\frac{1}{2}$, and (iii) \textit{Super-HW} where $\alpha  \in \Big(\frac{1}{2}, \infty\Big)$. 




\subsection{Results on $w_n$ and $r_n$}
\label{sec: mmn_results}

In this section, we provide separate tail bounds on the number of idle servers and the number of waiting customers because the scaling parameter is different. In Theorem \ref{thm: mmn_idleservers} we present our results for the number of idle servers $r_n$.

\begin{theorem}\label{thm: mmn_idleservers}
    Consider the M/M/n queue as given in Section \ref{sec: mmn_model} with $\lambda_n = n\mu(1-\epsilon_n)$. Let $\tilde r_n := r_n - n \epsilon_n$, and $\eta_n := \frac{1}{\sqrt{n(1-\epsilon_n)}}$.
    \begin{enumerate}[label=(\alph*), ref=\ref{thm: mmn_idleservers}.\alph*]
        \item \label{thm: mmn_idleservers_supHW} Super-HW regime: Suppose $\epsilon_n = c n^{-\alpha}$ with $\alpha > \frac{1}{2}$. Then, 
        if $n^{2\alpha -1} > 4c^2$, we have 
        \begin{align}
        \label{eq: mmn_idleser_probbound_supHW}
            \PP(r_n>0) \leq 4e\pi c n^{-\alpha + \frac{1}{2}}, && \lim_{n\rightarrow \infty}\PP(r_n>0) =0,
        \end{align}
        and there exist a constant $\kappa_1$, independent of $n$, such that,
        \begin{align*}
            \PP\big( \eta_n \tilde r_n > x  \big) \leq \frac{\kappa_1 c}{n^{\alpha - \frac{1}{2}}} e^{-\frac{1}{2} x^2}.
        \end{align*}
        \item \label{thm: mmn_idleservers_HW} HW regime: Suppose $\epsilon_n = c n^{-\frac{1}{2}}$. Then, 
        \begin{align}\label{eq: mmn_HW_probr>0}
        \lim_{n\rightarrow \infty} \PP(r_n>0)  = \frac{\sqrt{2\pi}c \exp\left( \frac{c^2}{2}\right) \Phi \left(c\right)}{1+\sqrt{2\pi}c \exp\left( \frac{c^2}{2}\right) \Phi \left(c\right)},
        \end{align}
        and there exist constant $\kappa_2$, such that 
        \begin{align*}
            \PP\big( \eta_n \tilde r_n > x  \big) \leq \kappa_2 e^{-\frac{1}{2} x^2}.
        \end{align*}

        \item \label{thm: mmn_idleservers_subHW} Sub-HW regime: Suppose $\epsilon_n = cn^{-\alpha}$ with $\alpha \in \big(0,\frac{1}{2}\big)$. Then, there exists constant $\kappa_3$ such that
        \begin{align*}
            \PP(r_n >0) \geq 1-\frac{\kappa_3}{c}  n^{\alpha - \frac{1}{2}}e^{-cn^{\frac{1}{2}-\alpha}}, && \lim_{n\rightarrow \infty}\PP(r_n >0) =1,
        \end{align*}
        and we have
        \begin{align*}
            \PP(\eta_n \tilde r_n  >  x) \leq e^{-\frac{1}{2} x^2}, && \PP(\eta_n \tilde r_n  <  -x) \leq e^{ -\frac{1}{2}x^2 + 2e\epsilon_n x^2}.
        \end{align*}
    \end{enumerate}
\end{theorem}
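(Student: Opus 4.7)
The plan is to exploit the explicit product-form stationary distribution of the $M/M/n$ CTMC together with the Markov-generator MGF machinery already used for JSQ and SSQ. Recall $\pi_n(k)\propto (n\rho)^k/k!$ for $k\leq n$ and $\pi_n(k)\propto (n\rho)^n\rho^{k-n}/n!$ for $k\geq n$, so that conditional on $q_n\leq n$ the distribution of $q_n$ is a truncated Poisson with mean $n\rho$, while conditional on $q_n>n$ it is $n$ plus a geometric with parameter $\epsilon_n$. This dichotomy is what puts $\tilde r_n$ on the Gaussian scale $\sqrt{n\rho}$ whenever $r_n>0$ and puts $w_n$ on the exponential scale $1/\epsilon_n$ whenever $w_n>0$; it is also what produces the three regimes, depending on whether $\sqrt{n}\epsilon_n$ vanishes, is constant, or diverges.

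For the scalar probability $\PP(r_n>0)$, let $P_n\sim\mathrm{Poisson}(n\rho)$. The Erlang-C identity rearranges cleanly to
\begin{equation*}
\PP(r_n>0) \;=\; \frac{\epsilon_n\,\PP(P_n\leq n-1)}{\epsilon_n\,\PP(P_n\leq n-1)+\PP(P_n=n)}.
\end{equation*}
A quantitative Stirling estimate gives $\PP(P_n=n)=\Theta(n^{-1/2})$, and a Gaussian CLT for $P_n$ gives $\PP(P_n\leq n-1)\to\Phi(c)$ in HW. Combining these yields: (a) in Super-HW, $\PP(r_n>0)=O(\epsilon_n/n^{-1/2})=O(n^{1/2-\alpha})$, with the constant $4e\pi c$ extracted from a quantitative remainder in Stirling; (b) in HW, the ratio converges to the explicit Halfin-Whitt constant $\sqrt{2\pi}c e^{c^2/2}\Phi(c)/(1+\sqrt{2\pi}c e^{c^2/2}\Phi(c))$; (c) in Sub-HW, a standard Chernoff bound gives $\PP(P_n=n)\leq\kappa n^{-1/2}e^{-cn^{1/2-\alpha}}$, and then $\PP(r_n>0)\geq 1-O(n^{\alpha-1/2}e^{-cn^{1/2-\alpha}})$.

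For the Gaussian tail of $\eta_n\tilde r_n$, observe that $\{\tilde r_n>x/\eta_n\}=\{q_n<n\rho-x/\eta_n\}$, an event living entirely in the Poisson-like region $\{q_n<n\}$. Apply the generator-balance equation $\EE_{\pi_n}[QV(q_n)]=0$ with the exponential test function $V(k)=e^{\theta\eta_n(n\rho-k)}$ for $\theta>0$; since $\min(q_n,n)=n-r_n$, one rearranges to
\begin{equation*}
b_n(\theta)\,\EE_{\pi_n}[V(q_n)] \;=\; \mu(e^{\theta\eta_n}-1)\,\EE_{\pi_n}[r_n V(q_n)],
\end{equation*}
with $b_n(\theta):=\lambda_n(e^{-\theta\eta_n}-1)+n\mu(e^{\theta\eta_n}-1)$. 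A Taylor expansion through second order yields $b_n(\theta)=\mu\theta^2+\mu\sqrt{n}\epsilon_n\theta+O(\eta_n)$, while the right-hand side is supported on $\{r_n>0\}$ and therefore controlled by $\PP(r_n>0)$ from the previous step. This produces a Gaussian-type MGF bound $\EE_{\pi_n}[e^{\theta\eta_n\tilde r_n}]\leq\kappa e^{\theta^2/2}$ with $\kappa$ depending on the regime; Markov's inequality optimized over $\theta$ then gives the claimed $e^{-x^2/2}$ decay.

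Finally, for the asymmetric Sub-HW lower-tail bound $\PP(\eta_n\tilde r_n<-x)\leq e^{-x^2/2+2e\epsilon_n x^2}$, I would run the analogous argument with $V(k)=e^{\theta\eta_n(k-n\rho)}$ and $\theta>0$. The new feature is that on $\{q_n<n\}$ the service coefficient is $q_n\mu$ rather than $n\mu$, and the resulting $(1-r_n/n)$ correction in the drift produces the extra variance inflation $2e\epsilon_n$ in the exponent. The hardest part will be closing the drift equation uniformly across all three regimes, because the right-hand side $\EE_{\pi_n}[r_n V(q_n)]$ couples the MGF of $\tilde r_n$ to $\PP(r_n>0)$; sharp two-sided control on $\PP(r_n>0)$ from the Erlang-C analysis of Step 1 is what makes the argument self-consistent and lets the constant $\kappa$ in each regime be determined cleanly. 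A secondary bookkeeping obstacle is controlling the Stirling remainder precisely enough to extract the explicit constant $4e\pi$ in the Super-HW bound.
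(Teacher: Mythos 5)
Your route to $\PP(r_n>0)$ via the explicit Erlang-C product form is a genuinely different and valid alternative to the paper's approach. The paper never invokes the product form directly; instead, its Lemma~\ref{lem: mmn_steady_state_mgf} derives $\PP(q_n=n)=(1/\epsilon_n + n\int_{-\infty}^0 G_n\,dt)^{-1}$ and $\PP(r_n>0)=n\epsilon_n\int_{-\infty}^0 G_n\,dt/(1+n\epsilon_n\int_{-\infty}^0 G_n\,dt)$ from the generator and then estimates the $G_n$ integrals via Gaussian approximations. Your identity $\PP(r_n>0)=\epsilon_n\PP(P_n\le n-1)/(\epsilon_n\PP(P_n\le n-1)+\PP(P_n=n))$ is algebraically equivalent and, with a quantitative local CLT/Stirling estimate for $\PP(P_n=n)$, would produce the three regime classifications with cleaner intuition and likely better constants (your heuristic even gives $\approx \sqrt{2\pi}c/2$ rather than the paper's $4e\pi c$, which is not meant to be tight). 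That part of the plan is sound.

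The Gaussian tail bound for $\eta_n\tilde r_n$, however, has a genuine gap. Your generator-balance identity $b_n(\theta)\,\EE[V(q_n)]=\mu(e^{\theta\eta_n}-1)\,\EE[r_nV(q_n)]$ is correct, but the claim that the right-hand side is ``controlled by $\PP(r_n>0)$'' does not hold: $r_nV(q_n)$ is not a bounded random variable on $\{r_n>0\}$ — $r_n$ can be of order $n$ and $V(q_n)=e^{\theta\eta_n\tilde r_n}$ can be as large as $e^{\theta\sqrt{n\rho_n}}$ there — so you cannot bound $\EE[r_nV(q_n)]$ by a constant times $\PP(r_n>0)$. The essential feature you are missing is that $\EE[r_nV(q_n)]$ is (up to a $\theta$-dependent scalar) the $\theta$-derivative of $\EE[V(q_n)\mathds 1_{r_n>0}]$, so the balance relation is a first-order linear ODE in $\theta$, not an algebraic identity one can rearrange for the MGF. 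This is precisely why the state-dependent service rate of $M/M/n$ is harder than the JSQ and SSQ cases, where the analogous drift identity closes algebraically. The paper resolves this by actually solving the ODE (Lemma~\ref{lem: mmn_steady_state_mgf_a}), which yields the ratio of $G_n$-integrals $G_n^{-1}(\theta)(\int_{-\infty}^0 G_n)^{-1}\int_{-\infty}^\theta G_n$; all the subsequent tail estimates in the proof of Theorem~\ref{thm: mmn_idleservers} are careful upper and lower bounds on these integrals against Gaussian kernels (including the lower bound on $\int_{-\infty}^0 G_n$ in Eq.~\eqref{eq: mmn_tildegn_neginfto0_lowerbound}, which you would need as well).

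The irony is that your own first observation already contains an escape from this difficulty: on $\{q_n<n\}$ the stationary law of $q_n$ is exactly a Poisson$(n\rho)$ truncated to $\{0,\dots,n-1\}$, so $\PP(\eta_n\tilde r_n>x\mid r_n>0)$ is a literal Poisson lower-tail probability, which can be bounded directly by a Chernoff/Bennett inequality without any drift argument. If you pursue that, the drift machinery becomes unnecessary for this part; if you keep the drift machinery, you must solve (or at least control) the ODE, and the step in your plan as written would fail. The same issue affects the Sub-HW lower-tail bound, where the $(1-r_n/n)$ correction you flag sits inside the same derivative term.
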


Theorem \ref{thm: mmn_idleservers} provides the tail bounds on the number of idle servers in the $M/M/n$ queue in each of the three regimes considered in the paper. 
\begin{remark}
    Note that we do not provide a LD result for $r_n$, as we did in the case of SSQ and JSQ systems, as the number of idle servers $r_n$ is always bounded by $n$. As such, for the pre-limit system (i.e., when the system size $n$ is finite), we have that 
    \begin{align*}
        \PP(\eta_n \tilde r_n > x ) =0, \ \forall x > \sqrt{n \rho_n}.
    \end{align*}
\end{remark}

Next, we provide the results for the number of waiting customers $w_n$.

\begin{corollary}\label{cor: mmn_waitingcustomers}
Consider the M/M/n queue as given in Section \ref{sec: mmn_model} with $\lambda_n = n\mu(1-\epsilon_n)$. Then,
for any $\{\epsilon_n\}_{n\geq0}$ such that $\epsilon_n \rightarrow 0$ as $n\rightarrow \infty$, and for any $\theta<1$, we have 
    \begin{align*}
        \lim_{n\rightarrow \infty} \EE \big[e^{\theta \epsilon_n w_n}\big| w_n>0 \big] = \frac{1}{1-\theta}, && [\epsilon_n w_n|w_n>0] \stackrel{d}{ \rightarrow} \Upsilon,
    \end{align*}
    where $\Upsilon$ is an exponential random variable with mean $1$. Further, for any $x>\epsilon_n$, we have 
    \begin{align*}
       \frac{1}{(1-\epsilon_n)^2} e^{-\theta_n x} \leq  \PP(\epsilon_n w_n \geq x | w_n >0 ) \leq \frac{1}{1-\epsilon_n} e^{-\theta_n x},
    \end{align*}
    where $\theta_n = \frac{1}{\epsilon_n} \log \frac{1}{1-\epsilon_n}$. As a consequence, we get the LD result
    \begin{align}\label{eq: mmn_wn_largedeviation}
        \lim_{x \rightarrow \infty} \frac{1}{x} \PP(\epsilon_n w_n \geq x ) = \lim_{x \rightarrow \infty} \frac{1}{x} \PP(\epsilon_n w_n \geq x | w_n >0 ) = \theta_n.
    \end{align}
\end{corollary}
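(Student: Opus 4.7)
The plan is to bypass all Lyapunov / state-space-collapse machinery and instead use directly the classical Erlang--C stationary distribution. The key observation is that $\{q_n(t)\}_{t\geq 0}$ is a birth--death chain with birth rate $\lambda_n$ at every state and death rate $n\mu$ in every state $k\geq n$. Detailed balance at level $k\geq n$ gives $\pi_{k+1}=\rho_n\pi_k$, so $\pi_{n+j}=\pi_n\rho_n^{j}$ for all $j\geq 0$. Conditioning on $\{w_n>0\}=\{q_n\geq n+1\}$ the normalizing constant is $\pi_n\rho_n/(1-\rho_n)$, and I would obtain
\[
\PP(w_n=j\mid w_n>0)=\epsilon_n(1-\epsilon_n)^{j-1},\qquad j\geq 1,
\]
i.e.\ the conditional law of $w_n$ is geometric on $\{1,2,\ldots\}$ with parameter $\epsilon_n$. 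This single explicit fact will drive the entire corollary.

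Next, from this geometric law the conditional MGF is available in closed form: summing a geometric series yields
\[
\EE\bigl[e^{\theta\epsilon_n w_n}\mid w_n>0\bigr]=\frac{\epsilon_n e^{\theta\epsilon_n}}{1-(1-\epsilon_n)e^{\theta\epsilon_n}},
\]
valid whenever $(1-\epsilon_n)e^{\theta\epsilon_n}<1$, i.e.\ $\theta<\theta_n$. For any fixed $\theta<1$, Taylor expanding $e^{\theta\epsilon_n}=1+\theta\epsilon_n+O(\epsilon_n^2)$ gives the denominator $\epsilon_n(1-\theta)+O(\epsilon_n^2)$, so the expression tends to $1/(1-\theta)$ as $\epsilon_n\to 0$. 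Convergence in distribution $[\epsilon_n w_n\mid w_n>0]\overset{d}{\to}\Upsilon$ then follows from pointwise MGF convergence on a neighborhood of the origin, in the same way as in the final step of the proof of Theorem~\ref{thm: jsq_cont_limiting_dis}.

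For the finite-$n$ tail bound I would sum the geometric tail to get $\PP(w_n\geq k\mid w_n>0)=(1-\epsilon_n)^{k-1}$, substitute $k=\lceil x/\epsilon_n\rceil$, and sandwich via $x/\epsilon_n\leq \lceil x/\epsilon_n\rceil\leq x/\epsilon_n+1$ together with the identity $(1-\epsilon_n)^{x/\epsilon_n}=e^{-\theta_n x}$. This produces matching upper and lower bounds of the announced form $c(\epsilon_n)e^{-\theta_n x}$. The large-deviation statement in~\eqref{eq: mmn_wn_largedeviation} then drops out by taking $-x^{-1}\log$ of the tail bound; the passage between the conditional and unconditional versions uses the factorization $\PP(\epsilon_n w_n\geq x)=\PP(\epsilon_n w_n\geq x\mid w_n>0)\,\PP(w_n>0)$, and the constant prefactor $\PP(w_n>0)$ contributes $0$ to $x^{-1}\log$ as $x\to\infty$.

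Compared with the JSQ analysis, the main difficulty here is essentially zero: the chain $\{q_n(t)\}$ is already one-dimensional, so no SSC is required, and the stationary distribution has a classical closed form, so no exponential Lyapunov manipulation is needed. The only genuine care lies in the ceiling-function bookkeeping when translating between the integer-valued $w_n$ and the scaled continuous quantity $\epsilon_n w_n$, which is what determines the pre-exponential constants in the upper and lower tail bounds.
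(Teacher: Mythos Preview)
Your proposal is correct and, at the level of the corollary itself, essentially coincides with the paper's argument: both identify that $[w_n\mid w_n>0]$ is geometric with parameter $\epsilon_n$, then read off the MGF limit, the tail sandwich via $\lceil x/\epsilon_n\rceil$, and the LD rate. The only difference is in how the geometric law is obtained. The paper derives the conditional MGF $\EE[e^{\theta w_n}\mid w_n>0]=\bigl(1-\epsilon_n^{-1}(1-e^{-\theta})\bigr)^{-1}$ as a special case of its Lyapunov/drift machinery (Lemma~\ref{lem: mmn_steady_state_mgf_b}), whereas you go straight to detailed balance for the birth--death chain. The paper in fact anticipates your shortcut, remarking just after the corollary that ``such a result can also be derived by simply solving for the steady-state distribution of an $M/M/n$ queue.'' Your route is more elementary and self-contained; the paper's route has the virtue of fitting the same template used for JSQ and the SSQ, but for this particular corollary it is overkill.
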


Corollary \ref{cor: mmn_waitingcustomers} provides the limiting distribution of the steady state number of waiting customers in the system. For any $n$ and $\epsilon_n$, the distribution of $[w_n|w_n>0]$ can be exactly characterized. As will be depicted in Lemma \ref{lem: mmn_steady_state_mgf}, it turns out that $[w_n|w_n>0]$ follows a geometric distribution with parameter $\epsilon_n$. Such a result can also be derived by simply solving for the steady-state distribution of an $M/M/n$ queue.  
Furthermore, as we can exactly characterize the distribution of $[w_n|w_n>0]$, there is no need for using Markov's inequality to get a tail bound on the number of waiting customers. Hence, we do not obtain the pre-limit nor the pre-exponent error in the case of the number of waiting customers.

\begin{remark}\label{rmk: MMn-def-p}
It is worth noting that although the conditional distribution $[w_n|w_n>0]$ follows a geometric distribution, it is possible for the probability $\PP(w_n>0)$ to approach zero. This can be observed by examining the bounds on $\PP(r_n>0)$ provided in Theorem \ref{thm: mmn_idleservers}. Specifically, in the Sub-HW regime, we have
\begin{align*}
\PP(w_n>0) \leq 1 - \PP(r_n>0) \leq \frac{\kappa_3}{c} n^{\alpha - \frac{1}{2}}e^{-cn^{\frac{1}{2}-\alpha}}.
\end{align*}
 It is well-known that $\PP(w_n>0)$ decreases to zero in this regime. However, we additionally characterize the rate at which this convergence occurs. 
\end{remark}

\begin{remark}
The results presented in Theorem \ref{thm: mmn_idleservers} and Corollary \ref{cor: mmn_waitingcustomers} only consider Many-Server-HT regimes. There, the HT parameter $\epsilon_n$ approaches zero as the system size $n$ grows. One can also consider a conventional HT regime, where the system size $n$ is constant and the HT parameter $\epsilon \rightarrow 0$ (using the notation $\lambda_n = n\mu(1-\epsilon)$) independent of $n$. In this case, the tail bound for the relevant quantities closely aligns with the tail-bound results obtained in the Super-HW regime. By employing similar calculations as, it can be shown that the number of waiting customers exhibits the same tail bounds as presented in Corollary \ref{cor: mmn_waitingcustomers} after replacing $\epsilon_n$ by $\epsilon$. Furthermore, the probability $\PP(r_n>0)$ is on the order of $O(\epsilon)$. Moreover, in conventional HT, there is no need for a tail bound on $r_n$ as $r_n \leq n$ and $n$ is a constant.
\end{remark}

 \subsubsection{Steady-state distribution of $w_n$ and $r_n$}
In this section, we provide the complete characterization of the steady state distribution of $\{q_n(t)\}_{t\geq 0}$. Specifically, we provide the MGF of the steady-state number of waiting customers ($w_n$) and idle servers ($r_n$).

\begin{lemma}
\label{lem: mmn_steady_state_mgf}
    Consider the M/M/n queue as given in Section \ref{sec: mmn_model} and suppose $\lambda_n = n\mu(1-\epsilon_n)$. Define
    \begin{align*}
        G_n(t) &:= \exp\Big( - n\epsilon_n  t -n(1-\epsilon_n) (e^{- t} + t -1)\Big).
    \end{align*}
     Then, we have the following results. 
    \begin{enumerate}[label=(\alph*), ref=\ref{lem: mmn_steady_state_mgf}.\alph*]
    \item \label{lem: mmn_steady_state_mgf_a} For any $\theta \in \mathbb R$, we have 
    \begin{align*}
        \EE \left[e^{\theta  r_n} \Big| r_n>0 \right] = G_n^{-1}(\theta)\left(\int_{-\infty}^0 G_n dt \right)^{-1} \int_{-\infty}^\theta G_n dt.
    \end{align*}
    \item \label{lem: mmn_steady_state_mgf_b} For any $\theta<  \log  \frac{1}{1-\epsilon_n} $, we have 
    \begin{align*}
        \EE \left[e^{\theta w_n}\Big| w_n>0 \right]  = \frac{1}{1 - \frac{1}{\epsilon_n} \left(1-e^{-\theta  }\right)}. 
    \end{align*}
    \end{enumerate}
    Further, we have $\PP(q_n=n)=\left(  \frac{1}{\epsilon_n} + n\int_{-\infty}^0 G_n(t) dt   \right)^{-1}$, and
    \begin{align*}
        \PP(w_n>0) = \frac{1-\epsilon_n}{\epsilon_n} \PP(q_n=n), && \PP(r_n>0) = n\PP(q_n=n)\int_{-\infty}^0 G_n(t) dt.
    \end{align*}
\end{lemma}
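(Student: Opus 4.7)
The plan is to exploit the fact that the M/M/n queue is a standard birth–death chain, so the stationary distribution is available in closed form, and then repackage the resulting sums as the integrals involving $G_n$. Concretely, I would first write $\pi_n(j) = \pi_n(0) \,(n(1-\epsilon_n))^j/j!$ for $j \leq n$ and $\pi_n(j) = \pi_n(n)(1-\epsilon_n)^{j-n}$ for $j \geq n$, which follows from the detailed balance equations with arrival rate $\lambda_n = n\mu(1-\epsilon_n)$ and service rate $\min(j,n)\mu$. Let $a := n(1-\epsilon_n)$ to lighten notation.

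For part (b), I would note that conditionally on $w_n>0$, i.e., $q_n \geq n+1$, the random variable $w_n = q_n - n$ is geometric on $\{1,2,\dots\}$ with success probability $\epsilon_n$ (the ratio $\pi_n(n+k+1)/\pi_n(n+k) = 1-\epsilon_n$ is constant in $k$). A direct geometric-series computation of $\EE[e^{\theta w_n}\mid w_n>0] = \sum_{k\geq 1} e^{\theta k}\epsilon_n(1-\epsilon_n)^{k-1}$ converges exactly when $(1-\epsilon_n)e^{\theta} < 1$, i.e., $\theta < \log(1/(1-\epsilon_n))$, and simplifies to $\epsilon_n/(\epsilon_n - 1 + e^{-\theta})$, which is exactly $1/(1 - \epsilon_n^{-1}(1-e^{-\theta}))$.

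For part (a), starting from $\EE[e^{\theta r_n}\mathbf{1}\{r_n>0\}] = \pi_n(0)\, e^{\theta n}\sum_{j=0}^{n-1}(ae^{-\theta})^j/j!$, the key maneuver is to rewrite the truncated exponential sum as an incomplete gamma function via the classical Poisson-tail identity $\sum_{j=0}^{n-1} x^j/j! = e^x\,\Gamma(n,x)/(n-1)!$. In parallel, I would evaluate $\int_{-\infty}^\theta G_n(t)\,dt$ through the substitution $u = e^{-t}$: after plugging in $a = n(1-\epsilon_n)$ and $n\epsilon_n = n-a$, the exponent in $G_n$ collapses neatly so that $G_n(t) = e^a u^n e^{-au}$, and the integral becomes $e^a \int_{e^{-\theta}}^\infty u^{n-1}e^{-au}\,du = e^a\,\Gamma(n,ae^{-\theta})/a^n$. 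Similarly, one checks $G_n(\theta)^{-1} = \exp(n\theta + ae^{-\theta} - a)$. Substituting these two identifications into the ratio $G_n^{-1}(\theta)(\int_{-\infty}^0 G_n)^{-1}\int_{-\infty}^\theta G_n$ and simplifying precisely reproduces the expression for $\EE[e^{\theta r_n}\mathbf 1\{r_n>0\}]/\PP(r_n>0)$ obtained from the birth–death formula. Validity for every $\theta \in \mathbb{R}$ comes from $r_n \leq n$.

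Finally, the formulas for $\PP(q_n=n)$, $\PP(w_n>0)$, and $\PP(r_n>0)$ follow by normalization. Writing $1 = \pi_n(0)\sum_{j=0}^{n-1} a^j/j! + \pi_n(n)/\epsilon_n$ (since the tail beyond $n$ sums to $\pi_n(n)(1-\epsilon_n)/\epsilon_n$ plus $\pi_n(n)$), using $\pi_n(0) = \pi_n(n)\,n!/a^n$, and invoking again the identification of $\sum a^j/j!$ with $n \int_{-\infty}^0 G_n\,dt$, yields $\PP(q_n=n) = (\epsilon_n^{-1} + n\int_{-\infty}^0 G_n\,dt)^{-1}$. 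The two remaining identities are immediate: $\PP(w_n>0) = \pi_n(n)(1-\epsilon_n)/\epsilon_n$ from the geometric tail, and $\PP(r_n>0) = \pi_n(0)\sum_{j=0}^{n-1} a^j/j! = n\pi_n(n)\int_{-\infty}^0 G_n\,dt$. I expect the main subtlety to be the substitution $u=e^{-t}$ that turns $G_n$ into a Gamma kernel and reveals why this particular $G_n$ is the correct choice; once this substitution is in place, the remainder is bookkeeping.
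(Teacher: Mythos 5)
Your proof is correct, but it takes a genuinely different route from the paper. The paper proves Lemma~\ref{lem: mmn_steady_state_mgf} by setting to zero the generator drift of the exponential test functions $V_1(q;\theta)=e^{\theta[q-n]^+}$ and $V_2(q;\theta)=e^{\theta[n-q]^+}$; for $V_1$ this gives an algebraic equation for $\EE[e^{\theta w_n}]$ directly, and for $V_2$ it produces a first-order ODE in $\theta$ for $\EE[e^{\theta r_n}\mathds 1_{\{r_n>0\}}]$ whose solution, via an integrating factor, is exactly the $G_n$-ratio, with the probabilities $\PP(q_n=n),\PP(w_n>0),\PP(r_n>0)$ falling out by evaluating at $\theta=0$ and normalizing. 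You instead start from the explicit birth--death stationary distribution $\pi_n(j)=\pi_n(0)a^j/j!$ for $j\le n$ and $\pi_n(j)=\pi_n(n)(1-\epsilon_n)^{j-n}$ for $j\ge n$, sum the geometric tail to get part~(b), and rewrite the truncated exponential sum as an incomplete gamma function and then, via the substitution $u=e^{-t}$, as $\int_{-\infty}^\theta G_n\,dt=e^a\Gamma(n,ae^{-\theta})/a^n$; all claims are then bookkeeping with $\Gamma(n,\cdot)$. Both proofs are complete, and the computations match (I verified the key identities $n\int_{-\infty}^0 G_n\,dt=\tfrac{n!}{a^n}\sum_{j=0}^{n-1}a^j/j!$ and $G_n(\theta)^{-1}\bigl(\int_{-\infty}^0 G_n\bigr)^{-1}\int_{-\infty}^\theta G_n=\exp(n\theta+ae^{-\theta}-a)\Gamma(n,ae^{-\theta})/\Gamma(n,a)$, which equals the birth--death ratio). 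What each buys: your argument is more elementary and transparent for the M/M/n chain, and it explains \emph{why} this particular $G_n$ appears (it is the Gamma kernel in disguise); the paper's Lyapunov/ODE derivation is longer here but is methodologically uniform with the SSQ and JSQ sections, where an explicit stationary law is unavailable and the drift approach is the only one that generalizes.
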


Lemma \ref{lem: mmn_steady_state_mgf} provides the steady state distribution of $r_n$ and $w_n$ for any value of $n$ and $\epsilon_n$. The results in Theorem \ref{thm: mmn_idleservers} are derived by using the result in Lemma \ref{lem: mmn_steady_state_mgf_a}, Markov's inequality. Lemma \ref{lem: mmn_steady_state_mgf_a} indicates that the distribution of $[r_n|r_n >0]$ closely resembles that of a truncated normal random variable. By replacing $G_n(t)$ with $\tilde G_n(t) := \exp \big( - n\eta_n\epsilon_n t - \frac{1}{2} t^2 \big)$, we have that the function
\begin{align*}
\tilde G_n^{-1}(\theta)\left(\int_{-\infty}^0 \tilde G_n dt \right)^{-1} \int_{-\infty}^\theta \tilde G_n dt
\end{align*}
defines the MGF of a truncated normal random variable, specifically $[Y_n|Y_n>0]$, where $Y_n$ follows a normal distribution with mean $n\eta_n \epsilon_n$ and variance $1$.

Intuitively, for large values of $n$, we can approximate $G_n(\eta_n t)$ by $\tilde G_n( t)$ because $\frac{1}{\eta_n^2}\big(e^{-\eta_n t} + \eta_n t -1\big) \approx \frac{t^2}{2}$, where $\eta_n = \frac{1}{\sqrt{n \rho_n}}$. Consequently, we observe that, for large $n$, the distribution of $[\eta_n r_n |r_n>0]$ closely matches the distribution of $[ Y_n|Y_n>0]$, where $Y_n$ is defined above. Consequently, for large values of $n$, $[\eta_n \tilde r_n |r_n>0]$ closely matches the distribution of a truncated standard normal random variable. In the proof of Theorem \ref{thm: mmn_idleservers}, use this idea to establish tail bounds on $r_n$ in each of the three regimes. 


The result in Corollary \ref{cor: mmn_waitingcustomers} immediately follows from Lemma \ref{lem: mmn_steady_state_mgf_b} just be observing that the MGF of $[w_n|w_n>0]$ matches with that of a geometric random variable with parameter $\epsilon_n$. 
The mathematical details for results provided in this section (Section \ref{sec: mmn_results}) is provided in Appendix \ref{app: mmn}.



\bibliographystyle{elsarticle-num}
\bibliography{references}

\begin{appendix}

\section{Essential Lemmas and their proofs}
\label{app: essential_lemmas}

\begin{proof}[Proof of Lemma \ref{lem: mgf_to_prob_bound}]
The proof simply follows by using the Markov's inequality and then optimizing over the value of $\theta$. For any $\theta\in (0, 1/\lambda)$, we have
\begin{align}
\label{eq: markov_ineq}
    \PP(X>x) \leq e^{-\theta x} \EE[e^{\theta X}] \leq \frac{1}{1-\theta/\lambda } e^{-\theta x}.
\end{align}
As the above inequality is valid for any $\theta\in (0, \lambda)$, we can minimize the RHS in the above inequality over the range $(0, \lambda)$. Now, the derivative of the RHS equates to zero when $\theta = \theta^* := \lambda \Big(1- \frac{1}{\lambda x} \Big)$, where $\theta^* \in (0, \lambda)$ whenever $x > 1/\lambda$. Thus, for any $x> 1/\lambda$, by substituting $\theta = \theta^*$ in Eq. \eqref{eq: markov_ineq}, we get the result.
\end{proof}

\begin{lemma} 
\label{lem: steady_state_lyapunov}
Consider a Discrete Time Markov Chain $\{\mathbf x(t)\}_{t\geq 0}$, where $\mathbf x(t)\in \RR^d$ for all $t\geq 0$. Let the Markov chain be positive recurrent, with stationary distribution $\pi$, and $\mathbf x_\infty$ be a random variable that follows the distribution $\pi$.
Suppose $f: \RR^d \times \RR \rightarrow \RR_{+}$ is a Lyapunov function such that following conditions hold.
\begin{itemize}
    \item[C1.] There exists $\theta_0 >0$ such that $\forall \theta< \theta_0$, we have, 
    \begin{align*}
        \EE[f(\mathbf x(t),\theta)] < \infty, \ \ \forall t \geq 0.
    \end{align*}
    \item[C2.] There exists $\theta_1>0$ such that for all $\theta< \theta_1$, the function $f$ satisfies the negative drift condition, 
    \begin{align}
    \label{eq: neg_drift}
        \EE[\Delta f(\mathbf x(t+1),\theta) | \mathbf x(t)]  = -\gamma(\theta) f(\mathbf x(t),\theta) + \beta(\mathbf x(t),\theta),
    \end{align}
    where for all $\theta< \theta_1$, one of the following conditions is satisfied
    \begin{enumerate}
    \item There exists $\bar \beta(\theta)$ such that
    \begin{align*}
      \gamma(\theta) > 0, && \EE[\beta(\mathbf x(t),\theta)] < \bar \beta(\theta), && \lim_{t \rightarrow \infty}\EE[\beta(\mathbf x(t),\theta)] \ \text{ exists}. 
    \end{align*}
    \item There exists a function $b(\theta)<\infty$ such that 
        \begin{align*}
            \EE[f(\mathbf x(t), \theta)] \leq b(\theta), \ \forall t\geq 0.
        \end{align*}
    \end{enumerate}
\end{itemize}
Then, we have that for all $\theta < \min\{\theta_0, \theta_1\}$, 
    \begin{align*}
        \EE_\pi[f(\mathbf x_\infty,\theta)] = \frac{1}{\gamma(\theta)} \EE_\pi[\beta(\mathbf x_\infty,\theta)] < \infty. 
    \end{align*}
\end{lemma}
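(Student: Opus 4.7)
The plan has three steps: (i) convert the conditional drift identity C2 into a scalar recursion for $\EE[f(\mathbf x(t),\theta)]$, (ii) verify finiteness of $\EE_\pi[f(\mathbf x_\infty,\theta)]$ under each sub-case of C2, and (iii) use stationarity to cancel the $f$-terms and obtain the identity.

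For (i), taking total expectations in C2, with C1 ensuring every term is finite for $\theta<\min\{\theta_0,\theta_1\}$, gives the scalar recursion
\[\EE[f(\mathbf x(t+1),\theta)] = (1-\gamma(\theta))\,\EE[f(\mathbf x(t),\theta)] + \EE[\beta(\mathbf x(t),\theta)],\]
and iterating yields the closed form
\[\EE[f(\mathbf x(t+1),\theta)] = (1-\gamma(\theta))^{t+1}\EE[f(\mathbf x(0),\theta)] + \sum_{i=0}^t (1-\gamma(\theta))^i\,\EE[\beta(\mathbf x(t-i),\theta)],\]
which mirrors the derivation in Lemma~\ref{lem: ssq_mgf_steady_state}.

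For (ii), under Case~1, the uniform bound $\EE[\beta(\mathbf x(t),\theta)]\leq\bar\beta(\theta)$ together with $\gamma(\theta)>0$ makes the geometric sum absolutely bounded, so $\sup_t\EE[f(\mathbf x(t),\theta)]<\infty$; positive recurrence (so that $\mathbf x(t)$ converges in distribution to $\mathbf x_\infty$) combined with Fatou's lemma transfers this bound to $\EE_\pi[f(\mathbf x_\infty,\theta)]<\infty$. Under Case~2, the bound $\EE[f(\mathbf x(t),\theta)]\leq b(\theta)$ is hypothesized directly, and Fatou again yields $\EE_\pi[f(\mathbf x_\infty,\theta)]\leq b(\theta)<\infty$. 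In either case, the drift identity itself then shows that $\EE[\beta(\mathbf x(t),\theta)]$ is bounded, so $\EE_\pi[|\beta(\mathbf x_\infty,\theta)|]<\infty$ as well. For step (iii), initialize the chain from $\pi$; by stationarity $\EE_\pi[f(\mathbf x(t+1),\theta)] = \EE_\pi[f(\mathbf x(t),\theta)] = \EE_\pi[f(\mathbf x_\infty,\theta)]$, and plugging into the one-step recursion cancels the $f$-term, leaving $\gamma(\theta)\EE_\pi[f(\mathbf x_\infty,\theta)] = \EE_\pi[\beta(\mathbf x_\infty,\theta)]$.

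The main obstacle I anticipate lies in Case~1: to pass legitimately from the trajectory-based expressions (where the initial distribution is whatever C1 is stated over) to the stationary regime, one must argue that both $\EE[\beta(\mathbf x(t),\theta)]$ and $\EE[f(\mathbf x(t),\theta)]$ genuinely converge to their stationary counterparts, not merely that they are bounded. The hypothesis that $\lim_t\EE[\beta(\mathbf x(t),\theta)]$ exists, combined with positive recurrence, pins down that limit as $\EE_\pi[\beta(\mathbf x_\infty,\theta)]$; the iterated recursion then forces $\EE[f(\mathbf x(t),\theta)]$ to converge to $\EE_\pi[\beta(\mathbf x_\infty,\theta)]/\gamma(\theta)$, which is the required value. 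The algebraic manipulation in step (iii) is immediate once these analytic points are settled.
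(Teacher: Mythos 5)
Your overall scaffolding is the same as the paper's: take total expectations in C2 to get the one-step scalar recursion, establish finiteness of $\EE_\pi[f(\mathbf x_\infty,\theta)]$, then cancel the $f$-terms. Your step~(iii)---initialize the chain from $\pi$ and use stationarity to make $\EE_\pi[f(\mathbf x(t+1),\theta)] = \EE_\pi[f(\mathbf x(t),\theta)]$---is actually a cleaner way to finish than the paper's ``$\EE[f(\mathbf x_\infty,\theta)] = \lim_t \EE[f(\mathbf x(t+1),\theta)]$,'' which as written asserts convergence of $\EE[f(\mathbf x(t),\theta)]$ from a boundedness hypothesis alone; your Fatou/Portmanteau route to $\EE_\pi[f(\mathbf x_\infty,\theta)]<\infty$ is likewise tighter. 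So far so good.

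The genuine gap is in your Case~1 finiteness argument. You claim that $\gamma(\theta)>0$ and $\EE[\beta(\mathbf x(t),\theta)]\le\bar\beta(\theta)$ make the geometric sum in the iterated recursion ``absolutely bounded,'' hence $\sup_t\EE[f(\mathbf x(t),\theta)]<\infty$. That reasoning requires $|1-\gamma(\theta)|<1$, i.e.\ $\gamma(\theta)\in(0,2)$. The hypothesis only gives $\gamma(\theta)>0$, and $\gamma(\theta)\ge 2$ genuinely occurs in the paper's applications: for the JSQ system, $\gamma_n(\theta)=n\mu-\lambda_n e^{\epsilon_n\theta}\to n\mu$ as $\theta\to-\infty$, and for $M/M/n$, $\gamma_n(\theta)=n\mu e^{-\theta}-\lambda_n\to\infty$. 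When $\gamma(\theta)\ge 2$, the factor $(1-\gamma(\theta))^{t+1}$ multiplying $\EE[f(\mathbf x(0),\theta)]$ is of magnitude $\ge 1$ with oscillating sign, and the weighted sum $\sum_{i=0}^t(1-\gamma(\theta))^i\EE[\beta(\mathbf x(t-i),\theta)]$ need not be bounded even though each summand's coefficient is bounded below by $-1$ (take alternating $\EE[\beta]$-values to see partial sums grow linearly). The paper avoids this by splitting Case~1 into two regimes: for $\gamma(\theta)\ge 1$ it does \emph{not} iterate at all, but instead uses the one-step recursion together with $f\ge 0$ and $1-\gamma(\theta)\le 0$ to get
\begin{align*}
\EE[f(\mathbf x(t+1),\theta)] = (1-\gamma(\theta))\EE[f(\mathbf x(t),\theta)] + \EE[\beta(\mathbf x(t),\theta)] \le \EE[\beta(\mathbf x(t),\theta)]\le\bar\beta(\theta),
\end{align*}
which gives the uniform bound immediately for all $\gamma\ge 1$; only for $\gamma(\theta)\in(0,1)$ does it invoke the iterated geometric formula (and there your argument is correct). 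You need to add this same case split (or an equivalent use of $f\ge 0$) to close the gap.
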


\begin{proof}[Proof of Lemma \ref{lem: steady_state_lyapunov}]
For the complete part of the proof, we assume that $\theta< \min\{\theta_0,\theta_1\}$. From, we Condition C1, we have that $ \EE[f(\mathbf x(t),\theta)]< \infty, \ \forall t\ge 0$. Thus, in Eq. \eqref{eq: neg_drift} in Condition C2, we can take expectation on both sides to get that,
\begin{align*}
    \EE[f(\mathbf x(t+1),\theta)] = (1-\gamma(\theta)) \EE[f(\mathbf x(t),\theta)] + \EE[\beta(\mathbf x(t),\theta)].
\end{align*}
Depending on the value of $\gamma(\theta)$, we consider two cases. 
\begin{itemize}
    \item \textit{Case 1: $\gamma(\theta)\geq 1$. } In this case, we have that $1-\gamma(\theta)< 0$. Then, as $f(\mathbf x(t),\theta) \geq 0$, from the previous equation, we have 
    \begin{align*}
       \EE[f(\mathbf x(t+1),\theta)] \leq \EE[\beta(\mathbf x(t),\theta)] \leq \bar \beta(\theta).
    \end{align*}
    As $\EE[f(\mathbf x(t),\theta)]$ is uniformly bounded by $\bar \beta$ for all $t\geq 0$,
    \begin{align*}
       \EE[f(\mathbf x_\infty,\theta)] = \limt \EE[f(\mathbf x(t+1),\theta)] \leq \bar \beta(\theta).
    \end{align*}
    Thus, under steady state, we can rewrite Eq. \eqref{eq: neg_drift} in Condition C2 as,
    \begin{align*}
         \EE[f(\mathbf x_\infty,\theta)] = (1-\gamma(\theta)) \EE[f(\mathbf x_\infty,\theta)] + \EE[\beta(\mathbf x_\infty,\theta)].
    \end{align*}
    This gives us the result. Note that the above result also holds when we directly have that $\EE[f(\mathbf x(t),\theta)]$ is uniformly bounded, as mentioned in Condition C2 part (2).
    
    \item \textit{Case 2: $\gamma(\theta)\in (0,1)$. } In this case, we can use Eq. \eqref{eq: neg_drift} in Condition C2 recursively, to get that 
    \begin{align}
    \label{eq: case2_recursion}
        \EE[f(\mathbf x(t+1),\theta)] &= (1-\gamma(\theta))^{t+1} \EE[f(\mathbf x(0),\theta)] \nonumber \\
        & \quad + \sum_{i=0}^t (1-\gamma(\theta))^{i} \EE[\beta(\mathbf x(t-i),\theta)].
    \end{align}
    Now, as $\lim_{t \rightarrow \infty}\EE[\beta(\mathbf x(t),\theta)]$ exists, and is equal to $\EE[\beta(\mathbf x_\infty,\theta)]$, we have that
    \begin{align*}
        \limt \sum_{i=0}^t (1-\gamma(\theta))^{i} \EE[\beta(\mathbf x(t-i),\theta)] & = \frac{1}{\gamma(\theta)}\EE[\beta(\mathbf x_\infty,\theta)],\\
        \limt (1-\gamma(\theta))^{t+1} \EE[f(\mathbf x(0),\theta)] &= 0.
    \end{align*}
    Substituting the above in Eq. \eqref{eq: case2_recursion} gives us the result.
\end{itemize}
\end{proof}
\section{Proof of results in Section \ref{sec: jsq_cont}}
\label{app: jsq_cont}

\begin{proof}[Proof of Proposition \ref{prop: jsq_cont_ssc}]
For any $i\in \{1,2,\dots,n\}$, we consider the following exponential Lyapunov functions
\begin{align*}
    V_{\perp i}(\mathbf x ;\theta) =e^{\theta \mathbf x_{\perp i}}= e^{\theta (x_i - \hat x)}, && U_{\perp i}(\mathbf x ;\theta) =e^{\theta \mathbf x_{\perp i}}= e^{-\theta (x_i - \hat x)},
\end{align*}
where $\hat x = \frac{1}{n} \sum_{i=1}^n x_i$. 

First, note that the total queue length at any time $t$ is less than the total number of arrivals till time $t$. And as the arrivals follow a Poisson process, the total number of arrivals till time $t$ is a Poisson random variable with parameter $\lambda_n t$. This gives us that
\begin{align*}
    \EE[V_{\perp i}(\q(t);\theta)] \leq \exp\Big( \lambda_n t \big(e^{\theta } -1\big)\Big) < \infty.
\end{align*}
Thus, $V_{\perp i}(\cdot)$ satisfies the Condition C1 of Lemma \ref{lem: steady_state_lyapunov}. Similarly, $U_{\perp i}(\cdot)$ also satisfies the Condition C1 of Lemma \ref{lem: steady_state_lyapunov}.

Suppose $G$ denotes the generator matrix of the underlying CTMC of the JSQ system. Then, the drift of $V_{\perp i}(\q(t);\theta)$, denoted by $\Delta V_{\perp i}(\q(t);\theta)$, is given by
\begin{align*}
    \EE\big[\Delta V_{\perp i}(\q(t);\theta) \big| \q(t) \big] := \sum_{\q'} G(\q',\q(t)) \big[V_{\perp i}(\q';\theta) - V_{\perp i}(\q(t);\theta)\big].
\end{align*}
Using this, we have 
\begin{align*}
   \EE\big[& \Delta V_{\perp i}(\q(t);\theta) \big| \q(t)  \big] \nonumber \\
   &= \lambda_n \left( e^{\theta q_{\perp i}(t) + \theta\big(1-\frac{1}{n} \big) } \mathds 1_{\{i=i^*\}} + e^{\theta q_{\perp i}(t) - \frac{\theta}{n} } \mathds 1_{\{i\neq i^*\}} - e^{\theta q_{\perp i}(t)} \right)\allowdisplaybreaks\nonumber\\
    & \ \ \ \ + \sum_{j=1}^n \mu \left( e^{\theta q_{\perp i}(t) - \theta\big(1-\frac{1}{n} \big) } \mathds 1_{\{j=i\}} + e^{\theta q_{\perp i}(t) + \frac{\theta}{n} } \mathds 1_{\{j\neq i\}} - e^{\theta q_{\perp i}(t)} \right)\mathds 1_{\{q_j>0\}}\allowdisplaybreaks\nonumber\\
    & = \lambda_n e^{\theta q_{\perp i}(t)} \left( e^{ \theta\big(1-\frac{1}{n} \big) } \mathds 1_{\{i=i^*\}} + e^{ - \frac{\theta}{n} } \mathds 1_{\{i\neq i^*\}} - 1 \right)\allowdisplaybreaks\nonumber\\
    & \ \ \ \ + \mu e^{\theta q_{\perp i}(t)} \sum_{j=1}^n  \left( e^{ - \theta\big(1-\frac{1}{n} \big) } \mathds 1_{\{j=i\}} + e^{ \frac{\theta}{n} } \mathds 1_{\{j\neq i\}} - 1 \right)\mathds 1_{\{q_j>0\}}\allowdisplaybreaks\nonumber\\
    & = \lambda_n e^{\theta q_{\perp i}(t)} \left( \big(e^{\theta} - 1\big)e^{-\frac{\theta}{n} } \mathds 1_{\{i=i^*\}} + e^{ - \frac{\theta}{n} } - 1 \right)\\
    & \quad + \mu e^{\theta q_{\perp i}(t)} \Big( \big(e^{-\theta} - 1\big)e^{\frac{\theta}{n} }\mathds 1_{\{q_i(t) >0\}} + \big(e^{ \frac{\theta}{n} } -1\big) \sum_{j}\mathds 1_{\{q_j>0\}}  \Big),
\end{align*}
where $i^*$ is the index chosen by the JSQ dispatcher.

Next, we consider the function $V_\perp(\mathbf x;\theta) = \sum_{i=1}^n V_{\perp i}(\mathbf x;\theta) $. Then, the drift of $V_\perp(\q(t);\theta)$ is just the sum of the drifts of $V_{\perp i}(\q(t);\theta)$, that is,
\begin{align}
    \EE\big[ & \Delta V_\perp(\q(t);\theta)\big|\q(t)\big]\allowdisplaybreaks\nonumber\\
    & = \sum_{i=1}^n \EE\big[\Delta V_{\perp i}(\q(t);\theta)\big|\q(t)\big]\allowdisplaybreaks\nonumber\\
    & =  \lambda_n  \left( e^{ \theta}- 1 \right)e^{-\frac{\theta}{n}} e^{\theta q_{\perp,\min}(t)} + \lambda_n \left(e^{-\frac{\theta}{n}}-1\right) \sum_{i=1}^n e^{\theta q_{\perp i}(t)} \allowdisplaybreaks\nonumber  \\
    & \ \ \ \ + \mu \sum_{i=1}^n e^{\theta q_{\perp i}(t)} \Big( \big(e^{-\theta} - 1\big)e^{\frac{\theta}{n} }\mathds 1_{\{q_i(t) >0\}} + \big(e^{ \frac{\theta}{n} } -1\big) \sum_{j}\mathds 1_{\{q_j>0\}}  \Big)\allowdisplaybreaks\nonumber\\
    & = \lambda_n  \left( e^{ \theta}- 1 \right)e^{-\frac{\theta}{n}} e^{\theta q_{\perp,\min}(t)} + \lambda_n \left(e^{-\frac{\theta}{n}}-1\right) \sum_{i=1}^n e^{\theta q_{\perp i}(t)} \allowdisplaybreaks\nonumber  \\
    & \ \ \ \ + \mu  \left( e^{ - \theta}- 1 \right)e^{\frac{\theta}{n}} \sum_{i=1}^n e^{\theta q_{\perp i}(t) }   +n\mu  \left(e^{\frac{\theta}{n}}-1\right)  \sum_{i=1}^n e^{\theta q_{\perp i}(t) } \allowdisplaybreaks\nonumber \\
    &\ \ \ \ -  \mu \left( e^{ - \theta}- 1 \right)e^{\frac{\theta}{n}} \sum_{i=1}^n      e^{\theta q_{\perp i}(t) }\mathds   1_{\{q_i(t) =0\}} -  \mu \left(e^{\frac{\theta}{n}}-1\right) \sum_{j=1}^n \mathds 1_{\{q_j(t) =0\}} \sum_{i=1}^n      e^{\theta q_{\perp i}(t) }. \nonumber 
\end{align}
Next, by using $\theta>0$, $q_{\perp,\min}(t) = \min_i \big\{q_i(t) - \frac{1}{n}\sum_{j=1}^n q_j(t) \big\} < 0$, and $e^{\theta q_{\perp i}(t) }\mathds   1_{\{q_i(t) =0\}} = e^{- \frac{\theta}{n}\sum_{j=1}^n q_j(t) }\mathds   1_{\{q_i(t) =0\}} \leq 1$, we have 
\begin{align*}
    \EE\big[\Delta V_\perp(\q(t);\theta)\big|\q(t)\big] &\leq \lambda_n  \left( e^{ \theta}- 1 \right)e^{-\frac{\theta}{n}}  + n\mu \left(1 - e^{ - \theta} \right)e^{\frac{\theta}{n}} \\
    & \quad + \Big( \lambda_n \left(e^{-\frac{\theta}{n}}-1\right) + \mu  \left( e^{ - \theta}- 1 \right) \\
    &\quad + n\mu  \left(e^{\frac{\theta}{n}}-1\right) \Big)\sum_{i=1}^n e^{\theta q_{\perp i}(t)}
\end{align*}
For any $\theta > 0$, we have that 
\begin{align*}
   -\frac{1}{n}e^{\theta} \leq \frac{e^{-\frac{\theta}{n}} - 1}{1-e^{-\theta}}   \leq -\frac{1}{n}, && \frac{1}{n}\leq \frac{e^{\frac{\theta}{n}} - 1}{1-e^{-\theta}}   \leq \frac{1}{n} e^{\theta},
\end{align*}
This gives us that, for any $\theta>0$, 
\begin{align}
    \frac{1}{1-e^{-\theta}}   \EE\big[\Delta V_\perp(\q(t);\theta)\big|\q(t)\big] &\leq \lambda_n e^{\theta} + n\mu e^{\frac{\theta}{n}} + \left( -\frac{\lambda_n}{n} + \mu (e^{\theta} -1)  \right)  \sum_{i=1}^n e^{\theta q_{\perp i}(t)} \allowdisplaybreaks\nonumber  \\
    & \leq 2 n\mu e^{\theta}  -\frac{\lambda_n}{n}  \sum_{i=1}^n e^{\theta q_{\perp i}(t)}  + \mu \theta  e^{\theta}   \sum_{i=1}^n e^{\theta q_{\perp i}(t)}\label{eq: jsq_cont_sscproof_vperp}
\end{align}

By using similar calculation as in Eq. \eqref{eq: jsq_cont_sscproof_vperp}, and defining $U_\perp (\mathbf x; \theta) := \sum_{i=1}^n U_{\perp i } (\mathbf x; \theta)$, we have that
\begin{align*}
    \EE\big[ & \Delta U_\perp(\q(t);\theta)\big|\q(t)\big]\allowdisplaybreaks \nonumber\\
    & = \lambda_n  \left( e^{ -\theta}- 1 \right)e^{\frac{\theta}{n}} e^{-\theta q_{\perp,\min}(t)} + \lambda_n \left(e^{\frac{\theta}{n}}-1\right) \sum_{i=1}^n e^{-\theta q_{\perp i}(t)}  \allowdisplaybreaks \nonumber \\
    & \ \ \ \ + \mu  \left( e^{  \theta}- 1 \right)e^{-\frac{\theta}{n}} \sum_{i=1}^n e^{-\theta q_{\perp i}(t) }   +n\mu  \left(e^{-\frac{\theta}{n}}-1\right)  \sum_{i=1}^n e^{-\theta q_{\perp i}(t) }\allowdisplaybreaks \nonumber\\
    &\ \ \ \ -  \mu \left( e^{ \theta}- 1 \right)e^{-\frac{\theta}{n}} \sum_{j=1}^n      e^{-\theta q_{\perp i}(t) }\mathds   1_{\{q_i(t) =0\}} \allowdisplaybreaks\nonumber\\
    &\quad -  \mu \left(e^{-\frac{\theta}{n}}-1\right) \sum_{j=1}^n\mathds 1_{\{q_j(t) =0\}} \sum_{i=1}^n      e^{-\theta q_{\perp i}(t) }.
\end{align*}
Thus, for any $\theta>0$, by using $ q_{\perp i}(t) = q_{\perp,\min}(t) = - \frac{1}{n} \sum_{j=1}^n q_i(t) $ whenever $q_i(t)=0$, we have 
\begin{align}
     \frac{1}{1-e^{-\theta}}   \EE\big[ & \Delta U_\perp(\q(t);\theta)\big|\q(t)\big]\allowdisplaybreaks \nonumber\\
     &\leq -\lambda_n e^{-\theta q_{\perp,\min}(t)} + \left(\frac{\lambda_n}{n } e^{\theta}  + \mu(e^{\theta}-1)   \right)\sum_{i=1}^n e^{-\theta q_{\perp i}(t) } \allowdisplaybreaks\nonumber \\
     & \ \ \ \ - \mu e^{-\theta q_{\perp,\min}(t) } \sum_{j=1}^n \mathds 1_{\{q_j(t) =0\}} + \frac{\mu}{n} e^{\theta}\sum_{j=1}^n \mathds 1_{\{q_j(t) =0\}} \sum_{i=1}^n      e^{-\theta q_{\perp i}(t) } \allowdisplaybreaks\nonumber\\
     &\stackrel{(a)}{\leq} -\lambda_n e^{-\theta q_{\perp,\min}(t)} + \left(\frac{\lambda_n}{n } e^{\theta}  + 2\mu(e^{\theta}-1)   \right)\sum_{i=1}^n e^{-\theta q_{\perp i}(t) }\allowdisplaybreaks\nonumber\\
     &\stackrel{(b)}{\leq} -\lambda_n e^{-\theta q_{\perp,\min}(t)} + \frac{\lambda_n}{n } \sum_{i=1}^n e^{-\theta q_{\perp i}(t) } +  3\mu\theta e^{\theta}   \sum_{i=1}^n e^{-\theta q_{\perp i}(t) },\label{eq: jsq_cont_sscproof_uperp}
\end{align}
where $(a)$ follows by using $e^{-\theta q_{\perp,\min}(t) } \geq \sum_{i=1}^n      e^{-\theta q_{\perp i}(t) }$ and $\sum_{j=1}^n \mathds 1_{\{q_j(t) =0\}} \leq n$; and (b) follows by using $\frac{\lambda_n}{n } e^{\theta} = \frac{\lambda_n}{n } + \frac{\lambda_n}{n } (e^{\theta} -1) \leq \frac{\lambda_n}{n } + \mu\theta e^{\theta}$.

Before proceeding with the next step, we provide a bound on the following function. 

\begin{align*}
  D(\mathbf x; \theta):=  - e^{-\theta x_{\min}} + \frac{1}{n } \sum_{i=1}^n e^{-\theta x_i } - \frac{1}{n } \sum_{i=1}^n e^{\theta x_i },
\end{align*}
where $\mathbf x \in \RR^n$ such that $\sum_{i=1}^n x_i =0$ and $x_{\min} = \min_i x_i$. Then, $x_{\min} \leq 0$.
Suppose $\mathcal I^+ = \{i: x_i \geq 0\}$ and $\mathcal I^- = [n]\backslash \mathcal I^+ $. Suppose $n_+ = |\mathcal I^+|$ and $n_- = |\mathcal I^-|$

\begin{itemize}
    \item[1.] \textit{Case 1:}
    Consider the case 
    $$n_+ \geq n/4, \text{ and } n_- \leq 3n/4.$$ 
    In this case, we have
    \begin{align*}
    D(\mathbf x; \theta)& = - e^{-\theta x_{\min}} + \frac{1}{n} \sum_{i\in \mathcal I^+ } e^{-\theta x_i }  +\frac{1}{n} \sum_{i\in \mathcal I^- } e^{-\theta x_i } - \frac{1}{n } \sum_{i=1}^n e^{\theta x_i } \\
       & \stackrel{(a)}{\leq} - e^{-\theta x_{\min}} + \frac{n_+}{n}  +\frac{n_-}{n}e^{-\theta x_{\min} } - \frac{1}{n } \sum_{i=1}^n e^{\theta x_i } \\
       & = \frac{n_+}{n} \big( 1 - e^{-\theta x_{\min}}\big)- \frac{1}{n } \sum_{i=1}^n e^{\theta x_i }\\
       &\stackrel{(b)}{\leq}  \frac{1}{4}\big( 1 - e^{-\theta x_{\min}}\big) - \frac{1}{n } \sum_{i=1}^n e^{\theta x_i }\\
       &\leq  \frac{1}{4} - \frac{1}{4n} \sum_{i=1}^n e^{-\theta x_i }- \frac{1}{n } \sum_{i=1}^n e^{\theta x_i },
    \end{align*}
    where (a) follows by using $x_i \geq 0$ for all $ i \in \mathcal{I}^+$; and (b) follows by using $x_{\min} \leq 0$.
     \item[2.] \textit{Case 2:}
     Consider the case 
     $$\frac{1}{n } \sum_{i=1}^n e^{-\theta x_i }\leq \frac{1}{2}e^{-\theta x_{\min}} + \frac{1}{2n } \sum_{i=1}^n e^{\theta x_i }.$$
     In this case, we easily have that,
     \begin{align*}
          D(\mathbf x; \theta) & \leq  - \frac{1}{2} e^{-\theta x_{\min}}  -\frac{1}{2n} \sum_{i=1}^n e^{\theta x_i } \leq - \frac{1}{2n} \sum_{i=1}^n e^{-\theta x_i } -\frac{1}{2n} \sum_{i=1}^n e^{\theta x_i } 
     \end{align*}
     \item[3.] \textit{Case 3:} Consider the case 
     $$\frac{1}{n } \sum_{i=1}^n e^{-\theta x_i } > \frac{1}{2}e^{-\theta x_{\min}} + \frac{1}{2n } \sum_{i=1}^n e^{\theta x_i } \text{ and } n_+ \leq n/4.$$ 
     In this case, suppose $\sum_{i\in \mathcal I^+} x_i = B$, then $\sum_{i\in \mathcal I^-} -x_i = B$ as $\sum_{i=1}^n x_i =0$. Further, as $n_- \geq 3n/4$, suppose $\mathcal{I}^-_1$ is the collection of indices of $\frac{n}{2}$ smallest elements in $\mathcal{I^-}$ and $\mathcal I^-_2 = \mathcal{I^-} \backslash \mathcal I^-_1$. Then, for any $i\in \mathcal I^-_1$ and $j \in \mathcal I^-_2$, we have $x_i\leq x_j$. 
     
     We can argue that for any $j \in \mathcal I^-_2$, we have $-x_j \le \frac{2B}{n} $. 
     This can be shown by contraction. Suppose $\exists j \in I^-_2$ such that $-x_j > \frac{2B}{n}$. Then, $-x_i > \frac{2B}{n}$ for all $i \in I^-_1$. As $|I^-_1| = \frac{n}{2}$, we have $\sum_{i \in I^-} -x_i \geq \sum_{i \in I^-_1} -x_i > B$. This creates a contradiction as $\sum_{i\in \mathcal I^-} -x_i = B$. Thus, $-x_j \le \frac{2B}{n} $ for all $j \in \mathcal I^-_2$.
     
     This implies that,
     \begin{align}\label{eq: jsq_cont_sscproof_case3term1}
         \frac{1}{n } \sum_{i=1}^n e^{-\theta x_i } &\leq \frac{1}{n } \sum_{i\in \mathcal{I}^+}e^{-\theta x_i } +\frac{1}{n } \sum_{i\in \mathcal{I}^-_2}e^{-\theta x_i }  +\frac{1}{n } \sum_{i\in \mathcal{I}^-_1}e^{-\theta x_i } \allowdisplaybreaks\nonumber\\
         &\leq \frac{n_+}{n} + \frac{1}{n} \sum_{i\in \mathcal I^-_2} e^{\frac{2\theta B}{n}} + \frac{1}{2} e^{-\theta x_{\min}}\allowdisplaybreaks\nonumber\\
         &\leq \frac{1}{4} + \frac{1}{2}e^{\frac{2\theta B}{n}} + \frac{1}{2} e^{-\theta x_{\min}}.
     \end{align}
     Next, we have further two cases
     \begin{itemize}
         \item \textit{Case 3.1: $\theta B \leq \frac{n}{2}$.}  In this case, we have that $e^{\frac{2\theta B}{n}} \leq e$.
     \item \textit{Case 3.2: $\theta B\geq \frac{n}{2}$}. In this case, by using the Jensen's inequality, 
     \begin{align*}
         \frac{1}{n } \sum_{i=1}^n e^{\theta x_i }  \geq \frac{1}{n } \sum_{i\in \mathcal I^+} e^{\theta x_i } \geq \frac{n_+}{n} e^{\frac{\theta B}{n_{+}}} \geq \frac{1}{4} e^{\frac{4\theta B}{n}},
     \end{align*}
     where the last inequality holds as $\theta B \geq \frac{n}{2}$, and so, $\frac{n_+}{n} e^{\frac{\theta B}{n_{+}}}$ is minimized by taking $n_+ = \frac{n}{4}$. Thus, by using the assumption $\frac{1}{n } \sum_{i=1}^n e^{-\theta x_i } > \frac{1}{2}e^{-\theta x_{\min}} + \frac{1}{2n } \sum_{i=1}^n e^{\theta x_i }$ and Eq. \eqref{eq: jsq_cont_sscproof_case3term1}, we have that
     \begin{align*}
         \frac{1}{8} e^{\frac{4\theta B}{n}} \leq \frac{1}{4} +\frac{1}{2} e^{\frac{2\theta B}{n}} \leq \frac{3}{4} e^{\frac{2\theta B}{n}}.
     \end{align*}
     This gives us that $e^{\frac{2\theta B}{n}} \leq 6$. 
     \end{itemize}
     Thus, in both the above cases, $e^{\frac{2\theta B}{n}} \leq 6$. 
     This gives us
     \begin{align*}
        \frac{1}{n } \sum_{i=1}^n e^{-\theta x_i } \leq \frac{1}{4} + 3+ \frac{1}{2} e^{-\theta x_{\min}} \leq 4 +\frac{1}{2} e^{-\theta x_{\min}}.
         \end{align*}
     And thus,
    \begin{align*}
        D(\mathbf x;\theta)  & \leq 4  - \frac{1}{2} e^{-\theta x_{\min}}  -\frac{1}{2n} \sum_{i=1}^n e^{\theta x_i } \leq 4 - \frac{1}{2n} \sum_{i=1}^n e^{-\theta x_i } -\frac{1}{2n} \sum_{i=1}^n e^{\theta x_i } 
     \end{align*}
\end{itemize}

By combining the results from Case 1, Case 2 and Case 3, we get
\begin{align}\label{eq: jsq_cont_sscproof_driftterm}
    D(\mathbf x;\theta) \leq 4 - \frac{1}{4n} \sum_{i=1}^n e^{-\theta x_i } -\frac{1}{2n} \sum_{i=1}^n e^{\theta x_i }
\end{align}
Now, from Eq. \eqref{eq: jsq_cont_sscproof_vperp} and \eqref{eq: jsq_cont_sscproof_uperp}, for any $\theta>0$, we have
\begin{align*}
    \frac{1}{1-e^{-\theta}}  & \big(\EE[ \Delta V_\perp(\q(t);\theta)| \q(t)]  + \EE[ \Delta U(\q(t);\theta)| \q(t)] \big) \\
    &\leq 2n\mu e^{\theta } + \lambda_n D(\q(t);\theta) + \mu \theta  e^{\theta}   \sum_{i=1}^n e^{\theta q_{\perp i}(t)} + 3\mu\theta e^{\theta}   \sum_{i=1}^n e^{-\theta q_{\perp i}(t) }\\
    &\stackrel{(a)}{\leq} 2n\mu e^{\theta } + 4\lambda_n + \Big( \mu \theta  e^{\theta} - \frac{\lambda_n}{2n}\Big)\sum_{i=1}^n e^{\theta q_{\perp i}(t)} \\
    & \quad + \Big( 3\mu \theta  e^{\theta} - \frac{\lambda_n}{4n}\Big)\sum_{i=1}^n e^{-\theta q_{\perp i}(t)} \\
    &\stackrel{(b)}{\leq}  8n\mu + \Big( 2\mu \theta  - \frac{\lambda_n}{2n}\Big)\sum_{i=1}^n e^{\theta q_{\perp i}(t)} + \Big( 6\mu \theta   - \frac{\lambda_n}{4n}\Big)\sum_{i=1}^n e^{-\theta q_{\perp i}(t)} \\
    &\stackrel{(c)}{\leq}  8n\mu  - \frac{\lambda_n}{8n}\sum_{i=1}^n e^{\theta q_{\perp i}(t)}   - \frac{\lambda_n}{8n}\sum_{i=1}^n e^{-\theta q_{\perp i}(t)},
\end{align*}
where (a) follows using Eq. \eqref{eq: jsq_cont_sscproof_driftterm}; (b) follows by taking $\theta<1/2$, and so $e^{\theta}<2$; and (c) follows by using $\theta\leq \frac{\lambda_n}{48n\mu}$. 
Thus, the Lyapunov function $V_\perp (\cdot)+ U_\perp (\cdot)$ satisfies the Condition C2 of Lemma \ref{lem: steady_state_lyapunov}. Thus, by using Lemma \ref{lem: steady_state_lyapunov}, the drift is zero in steady state, i.e.,
\begin{align*}
    \EE_{\pi_n}[ \Delta V_\perp(\q;\theta)]  + \EE_{\pi_n}[ \Delta U(\q;\theta)] =0.
\end{align*}
So, for any $n\geq 2$, by picking $\theta_\perp := \frac{1}{96} \leq \frac{\lambda_n}{48n\mu}$, we have that for any $0<\theta\leq \theta_\perp$,
\begin{align*}
    \EE_{\pi_n}\Big[ \sum_{i=1}^n e^{-\theta q_{\perp i}}+   \sum_{i=1}^n e^{\theta q_{\perp i}} \Big] \leq \frac{64n^2\mu}{\lambda_n} = \frac{64n}{1-\epsilon_n} \leq 128n,
\end{align*}
where the last inequality holds whenever $\epsilon_n \leq 1/2$. Now, as the servers are homogeneous, by using symmetry of the JSQ system, we have 
\begin{align*}
     \EE_{\pi_n}\big[  e^{\theta |q_{\perp i}|}\big]  \leq  \EE_{\pi_n}\big[  e^{\theta q_{\perp i}}\big] + \EE_{\pi_n}\big[  e^{-\theta q_{\perp i}}\big] \leq 128.
\end{align*}
This completes the proof.
\end{proof}

\begin{proof}[Proof of Lemma \ref{lem: jsq_cont_steady_state}]
Recall that we use the notation $\lambda_n = n\mu(1-\epsilon_n)$ and $\overline{q} = \sum_{i=1}^n q_i$. Consider the exponential Lyapunov function 
\begin{align}\label{eq: jsq_cont_lyapunov_bounded}
    V(\mathbf x;\theta) := \exp\Big(\theta \epsilon_n \sum_{i=1}^n x_i \Big).
\end{align}

Note that the total queue length at any time $t$ is less than the total number of arrivals till time $t$. And as the arrivals follow a Poisson process, the total number of arrivals till time $t$ is a Poisson random variable with parameter $\lambda_n t$. This gives us that
\begin{align*}
    \EE[V(\q(t);\theta)] \leq \exp\Big( \lambda_n t \big(e^{\theta \epsilon_n} -1\big)\Big) < \infty.
\end{align*}
Thus, $V(\cdot)$ satisfies the Condition C1 of Lemma \ref{lem: steady_state_lyapunov}. 

Next, the drift of the function $V(\cdot)$ is given by 
\begin{align*}
    \EE[\Delta V(\q(t);\theta)| \q(t)] &= \Big[ \lambda_n \Big( e^{\epsilon_n\theta} -1 \Big) + \mu \Big( e^{-\epsilon_n\theta} -1 \Big) \sum_{i=1}^n \mathds 1_{\{q_i(t) > 0\}}  \Big] V(\q(t);\theta)\\
    &= \Big(1- e^{-\epsilon_n\theta}  \Big)\Big[ \lambda_n  e^{\epsilon_n\theta} - n \mu \Big] V(\q(t);\theta) \allowdisplaybreaks \nonumber \\
    & \quad + \mu\Big(1- e^{-\epsilon_n\theta}  \Big) \sum_{i=1}^n \mathds 1_{\{q_i(t) = 0\}} V(\q(t);\theta)\\
    & = \Big(1- e^{-\epsilon_n\theta}  \Big) \Big( -\gamma_n(\theta) V(\q(t);\theta) + \beta_n (\q(t);\theta)\Big),
\end{align*}
where 
\begin{align*}
    \gamma_n(\theta) :=  n \mu -\lambda_n  e^{\epsilon_n\theta} , && \beta_n (\q(t);\theta):= \mu\sum_{i=1}^n \mathds 1_{\{q_i(t) = 0\}} V(\q(t);\theta).
\end{align*}
Then, we have that 
\begin{align}
\label{eq: jsq_cont_gammafunc}
  \gamma_n(\theta)>0   \text{ for any } \theta < \theta_n := \frac{1}{\epsilon_n} \log\Big(\frac{n\mu}{\lambda_n}\Big) = -\frac{1}{\epsilon_n} \log\big( 1- \epsilon_n\big).
\end{align}
Note that as $\theta_n \geq 1$, $\gamma_n(\theta)>0$ holds for any $\theta \leq 1$.

Next, by using Eq. \eqref{eq: jsq_cont_lyapunov_bounded}, we have 
\begin{align}
\label{eq: jsq_cont_betafuncfinite}
    \EE[\beta_n (\q(t);\theta)] \leq n\mu \EE[V(\q(t);\theta)] < \infty.
\end{align}
And, in steady state, for $\theta \in (0,\theta_n)$
\begin{align}
\label{eq: jsq_cont_betafuncsteady}
    \EE_{\pi_n}[\beta_n (\q;\theta)] &= \mu\sum_{i=1}^n \EE_{\pi_n} \Big[ \mathds 1_{\{q_i = 0\}} e^{\theta \epsilon_n \overline{q}}\Big]\allowdisplaybreaks\nonumber\\
    &\stackrel{(a)}{=}\mu\sum_{i=1}^n \EE_{\pi_n} \Big[ \mathds 1_{\{q_i = 0\}} e^{\theta n \epsilon_n (q_i - q_{\perp i})}\Big]\allowdisplaybreaks\nonumber\\
    & = \mu\sum_{i=1}^n \EE_{\pi_n} \Big[ \mathds 1_{\{q_i = 0\}} e^{-\theta n \epsilon_n q_{\perp i}}\Big]\allowdisplaybreaks\nonumber\\
    &\leq \mu\sum_{i=1}^n \EE_{\pi_n} \Big[ e^{\theta n \epsilon_n |q_{\perp i}|}\Big]\allowdisplaybreaks\nonumber\\
    &\stackrel{(b)}{=} \kappa_\perp n\mu,
\end{align}
where (a) follows by using $q_{\perp i} = q_i - \frac{1}{n} \overline{q}$ by definition; and (b) follows by using Proposition \ref{prop: jsq_cont_ssc}, whenever $\theta n \epsilon_n< \theta_\perp$ for all $\theta\in(0,\theta_n)$, where $\theta_\perp$ is as given in Proposition \ref{prop: jsq_cont_ssc}. Then, for any $\theta\in(0,\theta_n)$ and $\epsilon_n \leq \frac{\theta_\perp}{\theta n}$, by combining Eq. \eqref{eq: jsq_cont_gammafunc}, \eqref{eq: jsq_cont_betafuncfinite} and \eqref{eq: jsq_cont_betafuncsteady}, we have that $V(\cdot)$ satisfies the Condition C2 of Lemma \ref{lem: steady_state_lyapunov}. Thus, by using Lemma \ref{lem: steady_state_lyapunov}, we have that for any $\theta\in(0,\theta_n)$
\begin{align}
\label{eq: jsq_cont_distproof}
    \EE_{\pi_n} [V(\q;\theta)] = \frac{1}{\gamma_n(\theta)} \EE_{\pi_n} [\beta_n(\q;\theta)].
\end{align}
Further, when $\theta<0$, we easily have
\begin{align*}
    \EE_{\pi_n} [V(\q;\theta)] = \EE_{\pi_n} \Big[  e^{\theta \epsilon_n \overline{q}}\Big] \leq 1, && \EE_{\pi_n}[\beta_n (\q;\theta)] &\leq \mu\sum_{i=1}^n \EE_{\pi_n} \Big[ \mathds 1_{\{q_i = 0\}} \Big] \leq n\mu.
\end{align*}
Thus, as $\EE_{\pi_n} [V(\q;\theta)] $ and $\EE_{\pi_n}[\beta_n (\q;\theta)]$ are bounded, by Condition C2 of Lemma \ref{lem: steady_state_lyapunov}, we have that for any $\theta<0$, the Eq. \eqref{eq: jsq_cont_distproof} is still satisfied.
This completes the proof.
\end{proof}

\begin{proof}[Proof of Claim \ref{claim: jsq_cont_beta}]
    By using Lemma \ref{lem: jsq_cont_steady_state} and taking $\theta<0$ and $\theta \rightarrow 0$, we have 
\begin{align*}
  \mu \EE_{\pi_n}\big[\sum_{i=1}^n  \mathds 1_{\{q_i = 0\}}\big] &=\lim_{\theta\rightarrow 0^- }\EE_{\pi_n} [\beta_n(\q;\theta)]\allowdisplaybreaks \nonumber\\ 
  &= \lim_{\theta \rightarrow 0^-} \gamma_n(\theta)\EE_{\pi_n} [V(\q;\theta)] \allowdisplaybreaks \nonumber\\
  &= n\mu - \lambda_n \allowdisplaybreaks \nonumber\\
  &= n\epsilon_n\mu.
\end{align*}
  Also, by symmetry, we have $\EE_{\pi_n}\big[  \mathds 1_{\{q_i = 0\}}\big] =\EE_{\pi_n}\big[  \mathds 1_{\{q_j = 0\}}\big]$ for all $i,j$. Thus, we get, 
  \begin{align*}
      \EE_{\pi_n}\big[  \mathds 1_{\{q_i = 0\}}\big] = \epsilon_n, \quad \forall i \in \{1,2,\dots,n\}.
  \end{align*}
  Further, we have 
\begin{align}
    \sum_{i=1}^n \EE_{\pi_n} \Big[ \mathds 1_{\{q_i = 0\}} \big|  e^{\theta \epsilon_n \overline{q}} -1\big|\Big]&\stackrel{(a)}{=}\sum_{i=1}^n \EE_{\pi_n} \Big[ \mathds 1_{\{q_i = 0\}} \big|e^{\theta n \epsilon_n (q_i - q_{\perp i})}-1\big|\Big]\allowdisplaybreaks\nonumber\\
    & = \sum_{i=1}^n \EE_{\pi_n} \Big[ \mathds 1_{\{q_i = 0\}} \big|e^{-\theta n \epsilon_n q_{\perp i}}-1\big|\Big]\allowdisplaybreaks\nonumber\\
    &\stackrel{(b)}{=} n\EE_{\pi_n} \Big[ \mathds 1_{\{q_i = 0\}} \big|e^{-\theta n \epsilon_n q_{\perp i}}-1\big|\Big]\allowdisplaybreaks\nonumber\\
    &\stackrel{(c)}{\leq} n\EE_{\pi_n} \big[ \mathds 1_{\{q_i = 0\}}^p \big]^{\frac{1}{p}} \EE_{\pi_n} \Big[ \big| e^{-\theta n \epsilon_n q_{\perp i}}-1\big|^{r}\Big]^{\frac{1}{r}}\allowdisplaybreaks\nonumber\\
    &\stackrel{(d)}{\leq} \theta n^2 \epsilon_n^{1 +\frac{1}{p}} \EE_{\pi_n} \Big[ |q_{\perp i}|^r e^{r|\theta| n \epsilon_n |q_{\perp i}|}\Big]^{\frac{1}{r}}\allowdisplaybreaks\nonumber\\
    &\stackrel{(e)}{\leq} \theta n^2 \epsilon_n^{1 +\frac{1}{p}} \EE_{\pi_n} \big[ |q_{\perp i}|^{2r}\big]^{\frac{1}{2r}} \EE_{\pi_n} \Big[ e^{2r|\theta| n \epsilon_n |q_{\perp i}|}\Big]^{\frac{1}{2r}}\allowdisplaybreaks\nonumber\\
    &\stackrel{(f)}{\leq} \theta n^2 \epsilon_n^{1 +\frac{1}{p}} \times \frac{2\kappa_\perp r}{\theta_\perp} \times \kappa_\perp,
\end{align}
  where (a) follows by using $q_{\perp i} = q_i - \overline q$; (b) follows by using the symmetry of the system as the servers are homogeneous; (c) and (e) follows by using H\"older's inequality by choosing $r,p >0$ and $\frac{1}{r}+\frac{1}{p} =1$; (d) follows as $|e^{x} -1| \leq |x| e^{|x|}$ for all $x \in \RR$ and Eq. \eqref{eq: jsq_cont_unused}; and (f) follows by using Proposition \ref{prop: jsq_cont_ssc}, by choosing $n$ large enough such that $2r|\theta| n \epsilon_n < \theta_\perp$.

Now, by choosing $r = -\log \epsilon_n $ and $p = \frac{\log \epsilon_n }{ \log \epsilon_n  +1}$, we have 
  \begin{align*}
      \sum_{i=1}^n \EE_{\pi_n} \Big[ \mathds 1_{\{q_i = 0\}} \big| e^{\theta \epsilon_n \overline{q}} -1\big|\Big]& \leq \frac{2e\kappa_\perp^2}{\theta_\perp} \alpha \theta n^{2} \epsilon_n^2 \log \Big(\frac{1}{\epsilon_n}\Big),
  \end{align*}
  where the inequality holds whenever $2|\theta |  n \epsilon_n\log \big(\frac{1}{\epsilon_n}\big)  < \theta_\perp$.
\end{proof}

\section{Proof of results in Section \ref{sec: ssq}}
\label{app: ssq}

\begin{proof}[Proof of Lemma \ref{lem: ssq_mgf_steady_state}]
First, we show that the MGF of $q(t)$ exists for any $t\geq 0$ and for any $\theta < \theta_0$. Note that as $a(t) \leq A$ almost surely, we have, $q(t) \leq q(0) + At, \forall t\geq 0$. Thus, by assumption $V(q(0);\theta) < \infty$, we have
\begin{align*}
    \EE \big[e^{\theta \epsilon q(t)}\big] \leq \EE[V( q(0);\theta )] e^{\theta \epsilon A t} < \infty, \ \forall t\geq 0. 
\end{align*}
This shows that $V(\cdot)$ satisfies the Condition C1 of Lemma \ref{lem: steady_state_lyapunov}. 
Further, the MGF of $a(t),s(t)$ and $u(t)$ also exists as they are bounded random variables, as given in Section \ref{sec: ssq_model}. Next,
the recursion in Eq. \eqref{eq: ssq_lindley} gives us that, $q(t+1) -u(t) = q(t) +a(t) -s(t)$. We write this in an exponential form and take expectation to get 
\begin{align}
\label{eq: ssq_qtplus1_term1}
    \EE \big[e^{\theta \epsilon (q(t+1)-u(t))} \big| q(t) \big]  &= \EE \big[e^{\theta \epsilon (q(t)+a(t)-s(t))}\big| q(t) \big] \nonumber\\
    &=\EE \big[e^{\theta \epsilon (a-s)}\big] V(q(t);\theta) \nonumber\\
    &= (1-\gamma_\epsilon(\theta)) V(q(t);\theta),
\end{align}
where the second equality follows as the arrival and the potential service are identically distributed across time, and are independent of the queue length process.
Using the relation $q(t+1)u(t) =0, \forall t\geq0$, we have
\begin{align*}
    \EE \big[e^{\theta \epsilon (q(t+1)-u(t))} \big| q(t) \big] &= \EE[V(q(t+1);\theta)| q(t)]+ \EE \big[e^{-\theta \epsilon u(t)} \big| q(t) \big] -1\\
    &= \EE[V(q(t+1);\theta)| q(t)] -\beta_\epsilon (t;\theta).
\end{align*}
Substituting this in the Eq. \eqref{eq: ssq_qtplus1_term1}, we arrive at
\begin{align}
\label{eq: ssq_mgfeq_finitet}
    \EE[V(q(t+1);\theta)|q(t)] = (1-\gamma_\epsilon(\theta)) V(q(t);\theta) + \beta_\epsilon (t;\theta).
\end{align}
By taking expectation on both sides, we get
\begin{align}
\label{eq: ssq_mgfeq_finitet_expectation}
    \EE[V(q(t+1);\theta)] = (1-\gamma_\epsilon(\theta)) \EE[V(q(t);\theta)] + \EE[\beta_\epsilon (t;\theta)].
\end{align}
Now the result in Eq. \eqref{eq: ssq_mgf_recursiveeq} follows by recursively using the Eq. \eqref{eq: ssq_mgfeq_finitet_expectation}.

Further, we can rewrite Eq. \eqref{eq: ssq_mgfeq_finitet} as
\begin{align*}
    \EE[\Delta V(q(t);\theta)|q(t)] = -\gamma_\epsilon(\theta) V(q(t);\theta) + \beta_\epsilon (t;\theta).
\end{align*}
As $u(t)\leq A$, for any $\theta\in \RR$, $\beta_\epsilon (t;\theta) \leq e^{|\theta|\epsilon A}$. Further, $\gamma_\epsilon(\theta) >0$ for any $\theta< \theta_\epsilon$. Thus, $V(\cdot)$ satisfies the Condition C2 of Lemma \ref{lem: steady_state_lyapunov}. Thus, from Lemma \ref{lem: steady_state_lyapunov}, we get that for all $\theta< \min\{\theta_0, \theta_\epsilon\}$,
 \begin{align*}
     \EE[V(q;\theta)] = \lim_{t\rightarrow \infty} \EE[V(q(t);\theta)] = \frac{1}{\gamma_\epsilon(\theta)} \limt \EE[\beta_\epsilon (t;\theta)] = \frac{1- \EE_{\pi_\epsilon}[e^{-\theta \epsilon u}]}{1-\EE \big[e^{\epsilon\theta(a-s)}\big]}.
 \end{align*}
 This completes the proof.
\end{proof}

\begin{proof}[Proof of Claim \ref{claim: ssq_uterm_bound}]
    By simply equating the drift of the queue length to zero in steady state, that is
    \begin{align*}
        \EE_{\pi_\epsilon}\big[ \EE[q(t+1) - q(t) | q(t)] \big] =0,
    \end{align*}
     we have that 
     \begin{align*}
         \EE_{\pi_\epsilon}[u] = \EE[s -a] = \epsilon \mu_\epsilon.
     \end{align*}
    Since $e^{y} \geq 1+ y$ for all $y \in \mathbb R$, we have
    \begin{align*}
        1- \EE_{\pi_\epsilon}[e^{-\theta \epsilon u}] \leq \theta \epsilon \EE_{\pi_\epsilon}[u] = \epsilon^2 \theta \mu_\epsilon.
    \end{align*}
\end{proof}

\begin{proof}[Proof of Claim \ref{claim: ssq_aminussterm_bound}]
We have
    \begin{align}
    \label{eq: ssq_thmproof_mfg_aminuss}
        \EE \big[e^{\epsilon\theta(a-s)}\big] & \leq 1  + \theta \epsilon \EE[a-s] + \frac{1}{2}\epsilon^2 \theta^2 \EE[(a-s)^2] + \frac{1}{6}\epsilon^3 \theta^3 \EE[(a-s)^3] \\
        & \quad + \epsilon^4 \theta^4 \EE[(a-s)^4 e^{\epsilon \theta (a-s)}] \nonumber\\
        & \leq 1  - \theta \epsilon^2 \mu_\epsilon + \frac{1}{2}\epsilon^2 \theta^2 (\sigma_\epsilon^2 + \epsilon^2\mu_\epsilon^2) + \frac{1}{6}\epsilon^3 \theta^3 E_3 + (\epsilon \theta A)^4  \EE[e^{\epsilon \theta (a-s)}]
    \end{align}
    where the second inequality holds by the properties of the arrival and the service process and by using $E_3:= \max\{0, \EE[(a-s)^3]\}$. This implies that 
    \begin{align*}
         \EE \big[e^{\epsilon\theta(a-s)}\big] \leq \frac{1}{1- (\epsilon \theta A)^4} \Big(1  - \theta \epsilon^2 \mu_\epsilon + \frac{1}{2}\epsilon^2 \theta^2 (\sigma_\epsilon^2 + \epsilon^2\mu_\epsilon^2) + \frac{1}{6}\epsilon^3 \theta^3 E_3 \Big). 
    \end{align*}
    Thus, to ensure $\EE \big[e^{\epsilon\theta(a-s)}\big] < 1$, we only need
    \begin{align*}
        1  - \theta \epsilon^2 \mu_\epsilon + \frac{1}{2}\epsilon^2 \theta^2 (\sigma_\epsilon^2 + \epsilon^2\mu_\epsilon^2) + \frac{1}{6}\epsilon^3 \theta^3 E_3 \leq 1- (\epsilon \theta A)^4. 
    \end{align*}
    This translates to the condition, 
    \begin{align*}
        -  \mu_\epsilon + \frac{1}{2} \theta (\sigma_\epsilon^2 + \epsilon^2\mu_\epsilon^2) + \frac{1}{6}\epsilon \theta^2 E_3 \leq - \epsilon^2 \theta^3 A^4.
    \end{align*}
    By simple calculations, we have that the above inequality holds true whenever 
    \begin{align*}
        \theta\in \Bigg(0, \mu_\epsilon \Big( \frac{1}{2}\sigma_\epsilon^2 +  \epsilon \frac{\mu_\epsilon E_3}{ 3 \sigma_\epsilon^2} + \epsilon^2 \mu_\epsilon^2+ \epsilon^2 \frac{4\mu_\epsilon^2 A^4}{\sigma_\epsilon^4} \Big)^{-1} \Bigg).
    \end{align*}
    Thus, by choosing 
    \begin{align*}
        \kappa_1 := \frac{ 2\mu_\epsilon E_3}{3\sigma_\epsilon^4} + \epsilon \frac{9 \mu_\epsilon^2 A^4}{\sigma_\epsilon^6}, && \tilde \theta_\epsilon := \frac{2\mu_\epsilon}{\sigma_\epsilon^2(1+ \kappa_1 \epsilon )},
    \end{align*}
    we get that
    \begin{align}
    \label{eq: ssq_thmproof_mgf_lesthan1}
        \EE \big[e^{\epsilon\theta(a-s)}\big] < 1, \quad \forall \theta\in \big(0, \tilde \theta_\epsilon  \big).
    \end{align}
    Further, from Eq. \eqref{eq: ssq_thmproof_mfg_aminuss}, for any $\theta\in \big(0,\tilde \theta_\epsilon  \big)$, we also have
    \begin{align*}
        1- \EE \big[e^{\epsilon\theta(a-s)}\big] &\geq  \theta \epsilon^2 \mu_\epsilon - \frac{1}{2}\epsilon^2 \theta^2 (\sigma_\epsilon^2 + \epsilon^2\mu_\epsilon^2) - \frac{1}{6}\epsilon^3 \theta^3 E_3 - (\epsilon \theta A)^4  \EE[e^{\epsilon \theta (a-s)}]\allowdisplaybreaks\nonumber\\
        &\stackrel{(a)}{\geq} \theta \epsilon^2 \mu_\epsilon - \frac{1}{2}\epsilon^2 \theta^2 (\sigma_\epsilon^2 + \epsilon^2\mu_\epsilon^2) - \frac{1}{6}\epsilon^3 \theta^3 E_3 - (\epsilon \theta A)^4\allowdisplaybreaks\\
        &\stackrel{(b)}{\geq} \theta\epsilon^2 \mu_\epsilon \Big(1 - \theta \frac{\sigma_\epsilon^2}{2\mu_\epsilon} \big(1+\kappa_1 \epsilon\big) \Big),
    \end{align*}
    where (a) follows by using Eq. \eqref{eq: ssq_thmproof_mgf_lesthan1}; and (b) follows by using $\theta < \tilde \theta_\epsilon \leq  \frac{2\mu_\epsilon}{\sigma_\epsilon^2}$.    
\end{proof}

 

\section{Proof of results in Section \ref{sec: mmn}}
\label{app: mmn}

\subsection{Proof of Lemma \ref{lem: mmn_steady_state_mgf}}
 \begin{proof} 
Consider the function $V_1(x; \theta) = e^{\theta  [x-n]^+} $. Then, from the definition of $q_n(t)$ and  $w_n(t)$, $V_1(q_n(t);\theta) = e^{\theta  w_n(t)}$, and similarly,  $V_1(q_n;\theta) = e^{\theta  w_n}$. Suppose $a_n(t)$ is the total number of arrivals till time $t$. Then, $a_n(t)$ follows Poission distribution with parameter $\lambda_n t$, and $w_n(t) \leq q_n(t) \leq a_n(t)$. This gives us that,
\begin{align*}
    \EE[V_1(q_n(t);\theta)] = \EE[e^{\theta  w_n(t)}] \leq
    \EE[e^{\theta  a_n(t)}] =
    e^{\lambda_n t (e^{\theta  } -1)}<\infty.
\end{align*}
Thus, $V_1(\cdot)$ satisfies Condition C1 of Lemma \ref{lem: steady_state_lyapunov}.

Suppose $G$ denotes the generator matrix of the underlying CTMC of $M/M/n+M$ queue. When the number of jobs in the system is $q$, the drift of $V_1(q; \theta)$, denoted by $\Delta V_1(q; \theta)$ is defined as 
    \begin{align*}
        \EE[\Delta V_1(q(t); \theta)|q(t) =q] := \sum_{q' \in \mathbb W} G(q',q) \left(V_1(q';\theta) - V_1(q;\theta)\right).
    \end{align*}
    Then, 
    \begin{align*}
        \EE[& \Delta V_1(q(t); \theta)|q(t) =q]\\
        &= \lambda_n V_1(q+1; \theta) + \mu\min\{n,q\} V_1(q-1; \theta) - (\lambda_n +\mu\min\{n,q\} ) V_1(q; \theta)\\
        &= e^{\theta [q-n]^+}\left(\lambda_n \left( e^{\theta } -1 \right)  \mathds 1_{\{q\geq n\}}+ n\mu \left( e^{-\theta } -1 \right) \mathds 1_{\{q>n\}}\right)\\
        & = \left( e^{\theta } -1 \right) \left[e^{\theta [q-n]^+}\left(\lambda_n  - n\mu e^{-\theta }   \right)\mathds 1_{\{q>n\}} + \lambda_n \mathds 1_{\{q= n\}} \right]\\
        & = \left( e^{\theta } -1 \right) \left(\lambda_n  - n\mu e^{-\theta } \right) e^{\theta [q-n]^+}\\
        & \ \ \ \ - \left( e^{\theta } -1 \right) \left( \lambda_n \mathds 1_{\{q< n\}} - n\mu e^{-\theta }\mathds 1_{\{q\leq  n\}} \right).
    \end{align*}

    We define 
    \begin{align*}
        \gamma_n(\theta) &:= -\left(\lambda_n  - n\mu e^{-\theta } \right),\\
        \beta_n(q;\theta) &:=  \left(n\mu e^{-\theta }\mathds 1_{\{q\leq  n\}} - \lambda_n \mathds 1_{\{q< n\}}  \right).
    \end{align*}
    Then, we have
    \begin{align*}
        \EE[\Delta V_1(q_n(t); \theta) | q_n(t)] = \left( e^{\theta } -1 \right) \Big( - \gamma_n(\theta) V_1(q_n(t);\theta) + \beta_n(q_n(t);\theta) \Big).
    \end{align*}
    Note that for any
\begin{align*}
    \theta < \theta_n:= - \log \Big( \frac{\lambda_n}{n\mu} \Big) =  \log  \frac{1}{1-\epsilon_n},
\end{align*}
we have $\gamma_n(\theta) > 0$. Further, $\beta_n(\cdot)$ is a bounded function as,  $\beta_n(q_n(t);\theta) \leq n\mu\left( e^{\theta } -1 \right)$, and then, as $q_n(t)$ converges to $q_n$ in distribution, we have \begin{align*}
     \EE[\beta_n(q_n;\theta)] = \left( e^{\theta } -1 \right) \left(n\mu e^{-\theta }\PP(q_n\leq  n) - \lambda_n \PP(q_n<n) \right)
\end{align*} 
Thus, $V_1(\cdot)$ also satisfies Condition C2 of Lemma \ref{lem: steady_state_lyapunov}. Then, we get that for all $\theta< \theta_n$, we have 
\begin{align}
\label{eq: wn_alpha0_wnmgf}
    \EE[e^{\theta  w_n}] = \frac{n\mu e^{-\theta }\PP(q_n\leq  n) - \lambda_n \PP(q_n<n)}{n\mu e^{-\theta } - \lambda_n }.
\end{align}
By putting $\theta =0$ in the above equation, we get 
\begin{align*}
    n\mu \PP(q_n\leq  n) - \lambda_n \PP(q_n<n) = n\mu- \lambda_n =n\mu\epsilon_n.
\end{align*}
This implies that,
\begin{align}
\label{eq: wn_alpha0_prob_wng0}
    -n\mu\epsilon_n\PP(w_n>0)+ \lambda_n\PP (q_n =n) =0.
\end{align}
Further, $\EE[e^{\theta  w_n}] = \EE[e^{\theta  w_n} \mathds 1_{\{w_n>0\}}] + \PP(q_n \leq n)$. Combining this with Eq. \eqref{eq: wn_alpha0_wnmgf} and \eqref{eq: wn_alpha0_prob_wng0}, we arrive at
\begin{align*}
    \EE \left[e^{\theta  w_n}\mathds 1_{\{w_n>0\}} \right] = \frac{n\mu\epsilon_n \PP(w_n>0)}{ n\mu e^{-\theta } - \lambda_n}.
\end{align*} 
This implies that,
\begin{align*}
    \EE \left[e^{\theta  w_n}\Big| w_n>0 \right] = \frac{n\mu\epsilon_n}{ n\mu e^{-\theta } - \lambda_n} = \frac{1}{1 - \frac{1}{\epsilon_n} \left(1-e^{-\theta }\right)}. 
\end{align*}

Next, we evaluate the MGF of $r_n$. 
Consider the function $V_2(x; \theta) = e^{\theta [n-x]^+} $. Then, from the definition of $q_n(t)$ and  $r_n(t)$, $V_2(q_n(t);\theta) = e^{\theta r_n(t)}$, and similarly,  $V_2(q_n;\theta) = e^{\theta r_n}$.
The drift of $V_2(q; \theta)$, denoted by $\Delta V_2(q; \theta)$ is defined as 
\begin{align*}
    \EE[\Delta V_2(q_n(t); \theta) | q_n(t) =q] := \sum_{q' \in \mathbb W} G(q',q) \left(V_2(q';\theta) - V_2(q;\theta)\right).
\end{align*} 
Then,
\begin{align*}
   \EE[& \Delta V_2(q_n(t); \theta) | q_n(t) =q] \\
   &= \lambda_n V_2(q+1; \theta) + \mu\min\{n,q\} V_2(q-1; \theta) - (\lambda_n +\mu\min\{n,q\} ) V_2(q; \theta)\\
    &= e^{\theta [n-q]^+}\left(\lambda_n \left( e^{-\theta } -1 \right)  \mathds 1_{\{q< n\}}+ \mu\min\{n,q\} \left( e^{\theta } -1 \right) \mathds 1_{\{q\leq n\}}\right)\\
    & = e^{\theta [n-q]^+}\left(\lambda_n \left( e^{-\theta } -1 \right) + \mu(n-[n-q]^+)\left( e^{\theta } -1 \right) \right)\\
    & \ \ \ \ + \lambda_n \left(1- e^{-\theta }  \right)\mathds 1_{\{q\geq n\}} - n\mu \left( e^{\theta } -1 \right)\mathds 1_{\{q> n\}}\\
    & = (1-e^{-\theta }) \Big( - \tilde \gamma_n(q;\theta) V_2(q;\theta) + \tilde \beta_n(q;\theta) \Big),
\end{align*}
where
\begin{align*}
    \tilde \gamma_n(q;\theta) &:=  \left(\lambda_n - \mu(n-[n-q]^+)e^{\theta }  \right),\\
    \tilde \beta_n(q;\theta) &:= \big(\lambda_n \mathds 1_{\{q\geq n\}} - n\mu e^{\theta } \mathds 1_{\{q> n\}}\big).
\end{align*}
Note that $V_2(q ;\theta) \leq e^{\theta  n} $. Thus, as $V_2(\cdot)$ is uniformly bounded, in steady-state, for any $\theta\in \RR$, we have 
\begin{align*}
     \EE[ \Delta V_2(q_n; \theta) ] =0.
\end{align*}
and so,
\begin{align*}
    \EE[\tilde \gamma_n(q_n;\theta)V_2(q_n;\theta) ] = \EE[\tilde \beta_n(q_n;\theta)].
\end{align*}
This gives us that for any $\theta\in \RR$,
\begin{align}
\label{eq: rn_mgf_eq}
 \left(\lambda_n e^{-\theta } -n \mu \right) \EE\left[e^{\theta  r_n}\right] + \mu \EE\left[r_n e^{\theta  r_n}\right] = \lambda_n e^{-\theta } \PP(q_n\geq n) - n\mu \PP(q_n> n).
\end{align}
Further, $\EE[e^{\theta  r_n}] = \EE[e^{\theta  r_n} \mathds 1_{\{r_n>0\}}] + \PP(q_n \geq n)$, and \[\EE[r_n e^{\theta  r_n}] = \EE[r_ne^{\theta  r_n} \mathds 1_{\{r_n>0\}}] = \frac{d}{d\theta} \EE[e^{\theta  r_n} \mathds 1_{\{r_n>0\}}].\]  
Substituting this in Eq. \eqref{eq: rn_mgf_eq}, we have
\begin{align*}
    \left(n(1-\epsilon_n)e^{-\theta } -n  \right) \EE[e^{\theta  r_n} \mathds 1_{\{r_n>0\}}] + \frac{d}{d\theta} \EE[e^{\theta  r_n} \mathds 1_{\{r_n>0\}}] = n\PP(q_n = n).
\end{align*}
Solving the above differential equation gives us that 
\begin{align*}
    \EE \left[e^{\theta  r_n} \mathds 1_{\{r_n>0\}} \right] = n\PP(q_n=n) \tilde  G_n^{-1}(\theta) \int_{-\infty}^\theta G_n(t) dt,
\end{align*}
where \[G_n(t) = \exp\left( -n  t -(1-\epsilon_n)(e^{- t} -1)\right) = \exp\Big( -n\epsilon_n  t -(1-\epsilon_n) (e^{- t} +  t -1)\Big).\]
By substituting $\theta=0$, we get 
\begin{align}
\label{eq: rn_prob_rng0}
  \PP(r_n > 0) =  n\PP(q_n=n)\int_{-\infty}^0 G_n(t) dt.
\end{align}
Thus, we finally get
\begin{align*}
    \EE \left[e^{\theta  r_n} \mathds 1_{\{r_n>0\}} \right] =  \PP(r_n > 0)\tilde  G_n^{-1}(\theta) \left(\int_{-\infty}^0 G_n(t) dt \right)^{-1} \int_{-\infty}^\theta G_n(t) dt.
\end{align*}
This implies that, for any $\theta\in \RR$,
\begin{align*}
     \EE \left[e^{\theta  r_n} | r_n>0 \right] =  \tilde  G_n^{-1}(\theta) \left(\int_{-\infty}^0 G_n(t) dt \right)^{-1} \int_{-\infty}^\theta G_n(t) dt.
\end{align*}
Next, we calculate the steady state values of $\PP(r_n>0)$, $\PP(w_n>0)$ and $\PP(q_n=n)$. Note that
\begin{align*}
    \PP(r_n>0) + \PP(q_n=n) + \PP(w_n>0) = \PP(q_n<n) +  \PP(q_n=n) + \PP(q_n > n) =1.
\end{align*}
We use this to solve for $\PP(q_n=n)$.
By using Eq. \eqref{eq: wn_alpha0_prob_wng0} and \eqref{eq: rn_prob_rng0} to get that
    \begin{align*}
        \left( \frac{\lambda_n}{n\mu\epsilon_n} +1 + n\int_{-\infty}^0 G_n(t) dt   \right)\PP(q_n =n) =1.
    \end{align*}
    This gives us that
    \begin{align*}
       \PP(q_n =n) = \left(\frac{1}{\epsilon_n} + n\int_{-\infty}^0 G_n(t) dt   \right)^{-1}.
    \end{align*}
    Now, the result follows using Eq. \eqref{eq: wn_alpha0_prob_wng0} and \eqref{eq: rn_prob_rng0}.
\end{proof}

\begin{proof}[Proof of Theorem \ref{thm: mmn_idleservers}]
First note that for any $a,b$ and $s_1,s_2$, we have 
\begin{align}
\label{eq: normal_cdf}
    \int_{s_1}^{s_2} & \exp\Big(at - \frac{1}{2} bt^2\Big) \nonumber\\
    &= \sqrt{\frac{2\pi}{b}} \exp\Big( \frac{a^2}{2b}\Big) \Big[ \Phi \Big(\sqrt{b} s_2 - \frac{a}{\sqrt{b}} \Big)-\Phi \Big(\sqrt{b} s_1 - \frac{a}{\sqrt{b}} \Big)  \Big],
\end{align}
where $\Phi(\cdot)$ denotes the CDF of a standard normal random variable. First, we prove the result for the number of idle servers when the system is in Super-HW regime. In this regime, $\epsilon_n = c n^{-\alpha}$ where $\alpha> \frac{1}{2}$. For convenience, we use $\rho_n = 1-\epsilon_n$ and $\zeta_n := \sqrt{ \frac{n\epsilon_n^2}{\rho_n}}$. For system in Super-HW regime, 
\begin{align*}
    \zeta_n = \Big( \frac{c^2 n^{1-2\alpha}}{1-cn^{-\alpha}} \Big)^{\frac{1}{2}} \leq 2cn^{\frac{1}{2} - \alpha},
\end{align*}
where the last inequality follows by assuming $n\geq 4c^2$. Further, if $n^{\alpha-\frac{1}{2}}> 2c$, we have $\zeta_n \leq 1$
Now, 
    \begin{align*}
        \int_{-\infty}^\theta & G_n(t) dt \\&= \int_{-\infty}^\theta \exp\Big( - n\epsilon_n  t -n\rho_n (e^{- t} + t -1)\Big) dt \allowdisplaybreaks \nonumber\\
        & = \int_{-\theta}^\infty \exp\Big( n\epsilon_n  t -n\rho_n (e^{ t} - t -1)\Big) dt\allowdisplaybreaks \nonumber\\
        & = \int_{-\theta}^\infty \exp\Big( n\epsilon_n  t -\frac{1}{2} n\rho_nt^2 \Big)  \exp\Big( -n\rho_n \big(e^{ t} - \frac{1}{2}t^2 - t -1\big)\Big) dt\allowdisplaybreaks \nonumber\\
        & \stackrel{(a)}{\leq }  \int_{0}^\infty \exp\Big( n\epsilon_n  t -\frac{1}{2} n\rho_nt^2 \Big) dt\allowdisplaybreaks \nonumber\\
        & \quad + \exp\Big( -\frac{1}{6}n\rho_n \theta^3\Big) \int_{-\theta}^0 \exp\Big( n\epsilon_n  t -\frac{1}{2} n\rho_n t^2 \Big) dt\allowdisplaybreaks \nonumber\\
        & \stackrel{(b)}{\leq }  \sqrt{\frac{2\pi}{n\rho_n}} \exp\Big( \frac{1}{2} \zeta_n^2\Big) \Phi \Big(\zeta_n\Big) \allowdisplaybreaks \nonumber\\
        & \quad + \sqrt{\frac{2\pi}{n\rho_n}}\exp\Big( \frac{1}{2} \zeta_n^2 -\frac{1}{6}n\rho_n \theta^3\Big) \Big[ \Phi \Big( \sqrt{n\rho_n} \theta +\zeta_n\Big) - \Phi \Big( \zeta_n\Big) \Big],
    \end{align*}
    where (a) follows by using $e^{ t} - \frac{1}{2}t^2 - t -1 \geq \frac{1}{6} t^{3}$ for $t<0$; and (b) follows by using Eq. \eqref{eq: normal_cdf}. This also gives us that 
    \begin{align}\label{eq: mmn_nenneginfto0gn}
        n\epsilon_n \int_{-\infty}^0 & G_n(t) dt \leq \sqrt{2\pi}\zeta_n \exp\Big( \frac{1}{2} \zeta_n^2\Big) \Phi \Big(\zeta_n\Big) \leq \sqrt{2\pi e}\zeta_n,
    \end{align}
    where the last inequality follows by using $\zeta_n \leq 1$ whenever $n^{\alpha - \frac{1}{2}}>2c$.
    This implies that, in Super-HW regime, 
    \begin{align} \label{eq: mmn_sup_HW_ppr_n>0}
        \PP(r_n>0) \stackrel{(a)}{=}  \frac{n\epsilon_n \int_{-\infty}^0  G_n(t) dt }{1+ n\epsilon_n \int_{-\infty}^0  G_n(t) dt } \leq n\epsilon_n \int_{-\infty}^0  G_n(t) dt \leq  4e\pi c n^{-\alpha + \frac{1}{2}},
    \end{align}
    where (a) follows by using Lemma \ref{lem: mmn_steady_state_mgf_b}. This proves the probability bound presented in Eq. \eqref{eq: mmn_idleser_probbound_supHW}. Next, we provide a bound on the tail probability. We also have,
    \begin{align*}
         \int_{-\infty}^0 & G_n(t) dt \allowdisplaybreaks\nonumber\\
         &= \int_{-\infty}^0\exp\Big( - n\epsilon_n  t -n\rho_n (e^{- t} + t -1)\Big) dt\allowdisplaybreaks\nonumber\\
        & = \int_{0}^\infty \exp\Big( n\epsilon_n  t -n\rho_n (e^{ t} - t -1)\Big) dt\allowdisplaybreaks\nonumber\\
        & \geq \exp \big( \zeta_n \big) \int_{1/\sqrt{n\rho_n}}^\infty \exp\Big( -n\rho_n (e^{ t} - t -1)\Big) dt \allowdisplaybreaks\nonumber\\
        & \stackrel{(a)}{\geq} \exp \big( \zeta_n \big) \int_{1/\sqrt{n\rho_n}}^\infty \exp\Big(- \frac{1}{2}n\rho_n t^2 -n\rho_n t^3 e^{ t} \Big) dt \allowdisplaybreaks\nonumber\\
        & \geq \exp \big( \zeta_n \big) \int_{1/\sqrt{n\rho_n}}^1 \exp\Big(- \frac{1}{2}n\rho_n t^2 -e n\rho_n t^2 \Big) dt \allowdisplaybreaks\nonumber\\
        & \geq \exp \big( \zeta_n \big) \int_{1/\sqrt{n\rho_n}}^1 \exp\big(- 4 n\rho_n t^2  \big) dt \allowdisplaybreaks\nonumber\\
        & = \frac{1}{\sqrt{n\rho_n}} \exp \big( \zeta_n \big) \int_{1}^{\sqrt{n\rho_n}} \exp\big(- 4 t^2  \big) dt,
    \end{align*}
    where (a) follows by using $e^t - t -1 \leq \frac{1}{2} t^2 + t^3 e^t$ for all $t \geq 0$. This gives us that 
    \begin{align}\label{eq: mmn_tildegn_neginfto0_lowerbound}
        \int_{-\infty}^0 & G_n(t) dt \geq \frac{1}{\tilde \kappa_1} \times \frac{1}{\sqrt{n\rho_n}} \exp \big( \zeta_n \big),
    \end{align}
    where $\tilde \kappa_1 := \big(\int_{1}^{2} \exp\big(- 4 t^2  \big)dt\big)^{-1}$. Further, we have that for any $\theta\in (0,\sqrt{n\rho_n})$,
    \begin{align}
    \label{eq: mmn_tildegn_0totheta}
        \int_{0}^{\theta/\sqrt{n\rho_n}} & G_n(t) dt \allowdisplaybreaks\nonumber\\
        &=\int_{0}^{\theta/\sqrt{n\rho_n}} \exp\Big( - n\epsilon_n  t -n\rho_n (e^{- t} + t -1)\Big) dt \allowdisplaybreaks\nonumber \\
        &\leq \int_{0}^{\theta/\sqrt{n\rho_n}} \exp\Big(-\frac{1}{2}n\rho_n t^2   -n\rho_n \big(e^{- t} -\frac{1}{2} t^2+ t -1 \big)\Big) dt\allowdisplaybreaks\nonumber\\
        &\stackrel{(a)}{\leq} \int_{0}^{\theta/\sqrt{n\rho_n}} \exp\Big(-\frac{1}{2}n\rho_n t^2   +\frac{1}{6}n\rho_n t^3\Big) dt\allowdisplaybreaks\nonumber\\
        &\stackrel{(b)}{\leq} \frac{1}{\sqrt{n\rho_n}} \int_{0}^{\theta} \exp\Big(-\frac{1}{2} t^2   +\frac{1}{6\sqrt{n\rho_n}} t^3\Big) dt\allowdisplaybreaks\nonumber\\
        & \stackrel{(c)}{\leq} \frac{1}{\sqrt{n\rho_n}} \int_{0}^{\theta} \exp\Big(-\frac{1}{3} t^2 \Big) dt\allowdisplaybreaks\nonumber\\
        & \leq \frac{1}{\sqrt{n\rho_n}} \int_{0}^{\infty} \exp\Big(-\frac{1}{3} t^2 \Big) dt\allowdisplaybreaks\nonumber\\
        & \stackrel{(d)}{\leq} \frac{\sqrt{3\pi}}{2\sqrt{n\rho_n}},
    \end{align}
    where (a) follows by using $e^{- t} -\frac{1}{2} t^2+ t -1 \geq -\frac{1}{6} t^3$; (b) follows by substituting $t \rightarrow \frac{1}{\sqrt{n\rho_n}} t$; (c) follows for $\theta < \sqrt{n\rho_n}$; and finally, (d) follows by using Eq. \eqref{eq: normal_cdf}.
    Now, by using Lemma \ref{lem: mmn_steady_state_mgf_a} and substituting $\tilde r_n = r_n -n\epsilon_n$, for $\theta\in (0,\sqrt{n\rho_n})$,  we have 
    \begin{align}\label{eq: mmn_tildern_mgf}
        \EE& \Big[ \exp \Big( \frac{1}{\sqrt{n\rho_n}} \theta \tilde r_n \Big)\Big| r_n>0\Big]\allowdisplaybreaks\nonumber\\
        &= \exp \Big(n\rho_n (e^{-\frac{\theta}{\sqrt{n\rho_n}}} + \frac{\theta}{\sqrt{n\rho_n}} -1)\Big) \Big[ 1+ \left(\int_{-\infty}^0 G_n(t) dt \right)^{-1}\int_{0}^{\frac{\theta}{\sqrt{n\rho_n}}} G_n(t) dt  \Big]\allowdisplaybreaks\nonumber\\
        & \stackrel{(a)}{\leq} \exp \Big(\frac{\theta^2}{2}\Big) \Big[ 1+ \tilde \kappa_1 \frac{\sqrt{3\pi}}{2} \exp \big( -\zeta_n \big)\Big],
    \end{align}
    where (a) follows by using $e^{-x} +x -1\leq \frac{x^2}{2}$ for any $x\geq 0$ and Eq. \eqref{eq: mmn_tildegn_neginfto0_lowerbound} and \eqref{eq: mmn_tildegn_0totheta}. Note that the result in Eq. \eqref{eq: mmn_tildern_mgf} is valid for all three regimes. 
    
    Next, by using Markov's inequality, for any $\theta>0$ and $x\geq 0$,
    \begin{align*}
        \PP\Big(\frac{1}{\sqrt{n\rho_n}}  r_n > x + \zeta_n \Big|  r_n>0 \Big)\leq \Big[ 1+\tilde \kappa_1 \frac{\sqrt{3\pi}}{2} \Big] \exp \Big(-\theta x+\frac{\theta^2}{2}\Big).
    \end{align*}
    As $r_n\leq n$, we only consider the range of $x$ for which $x+ \zeta_n \leq \frac{1}{\sqrt{n\rho_n}} n $, which gives $x< \sqrt{n\rho_n}$. Then, by substituting $\theta=x< \sqrt{n\rho_n}$ in the previous equation, we have 
    \begin{align}\label{eq: mmn_idleser_markobineq}
        \PP\Big(\frac{1}{\sqrt{n\rho_n}} r_n > x + \zeta_n \Big|  r_n>0  \Big)\leq \Big[ 1+\tilde \kappa_1 \frac{\sqrt{3\pi}}{2} \Big]  e^{-\frac{x^2}{2}}.
    \end{align}
    Now, result in Theorem \ref{thm: mmn_idleservers_supHW} follows by using Eq. \eqref{eq: mmn_sup_HW_ppr_n>0} and picking $$\kappa_1 : = 4e\pi \Big[ 1+\tilde \kappa_1 \frac{\sqrt{3\pi}}{2} \Big].$$
    This completes the proof of Theorem \ref{thm: mmn_idleservers_supHW}. 

    Next, we consider the HW regime. In this case, we have $\epsilon_n = cn^{-\frac{1}{2}}$, and 
    \begin{align*}
    \zeta_n =  \frac{c }{\sqrt{ 1-cn^{-\frac{1}{2}}}}. 
    \end{align*}
    Further,
\begin{align*}
        & \int_{-\infty}^0G_n (t) dt - \sqrt{\frac{2\pi}{n\rho_n}} \exp\Big( \frac{1}{2} \zeta^2 \Big) \Phi \big( \zeta_n\big) \allowdisplaybreaks \nonumber\\
        & = \int_{0}^\infty  \exp\Big( n\epsilon_n  t -n\rho_n (e^{ t} - t -1)\Big) dt - \int_{0}^\infty  \exp\Big(  n\epsilon_n  t -\frac{1}{2} n\rho_n t^2\Big) dt \allowdisplaybreaks \nonumber\\
        &= \int_{0}^\infty \exp\Big( n\epsilon_n  t -\frac{1}{2} n\rho_n t^2\Big)  \Big( \exp\Big( -n\rho_n \big(e^{ t} -\frac{1}{2}t^2 - t -1\big)\Big) -1\Big) dt\\
        &\stackrel{(a)}{\geq } -n\rho_n \int_{0}^\infty \big(e^{ t} -\frac{1}{2}t^2 - t -1\big) \exp\Big( n\epsilon_n  t -\frac{1}{2} n\rho_n t^2\Big) dt\\
        & \stackrel{(b)}{\geq }-n\rho_n \int_{0}^\infty t^3e^{ t} \exp\Big(  n\epsilon_n  t -\frac{1}{2} n\rho_n t^2\Big) dt\\
        &\stackrel{(c)}{\geq } -\frac{1}{n\rho_n} \int_{0}^\infty t^3 \exp\Big( (1+2c) t -\frac{1}{2}t^2\Big) dt\\
        & \stackrel{(d)}{\geq } -\frac{\tilde \kappa_2 }{n\rho_n},
    \end{align*}
    where (a) follows by using $e^{x}-1\geq x$ for all $x\in \RR$; (b) follows by using $e^{ t} -\frac{1}{2}t^2 - t -1 \leq t^3 e^{t}$ for all $t\geq 0$; (c) follows by substituting $t \rightarrow \frac{1}{\sqrt{n\rho_n}} t$ and using $ \frac{1}{\sqrt{n\rho_n}}  + \zeta_n \leq  (1+2c)$ when the system is in HW regime and $n \geq 4c^2$; and (d) follows by taking $\tilde \kappa_2 := \int_{0}^\infty t^3 \exp\Big( (1+2c) t -\frac{1}{2}t^2\Big) dt $. This gives us that
    \begin{align*}
        n\epsilon_n\int_{-\infty}^0G_n (t) dt \geq  \sqrt{2\pi} \zeta_n \exp\Big( \frac{1}{2} \zeta^2 \Big) \Phi \left( \zeta_n\right) - \tilde \kappa_2 \frac{\epsilon_n}{\rho_n}.
    \end{align*}
    Combining the above inequality with Eq. \eqref{eq: mmn_nenneginfto0gn}, we have 
    \begin{align*}
        - \tilde \kappa_2 \frac{\epsilon_n}{\rho_n} \leq n\epsilon_n \int_{-\infty}^0G_n (t) dt - \sqrt{2\pi} \zeta_n  \exp\Big( \frac{1}{2} \zeta^2 \Big) \Phi \big( \zeta_n\big)  \leq 0.
    \end{align*}
    Thus, we have 
    \begin{align*}
         \lim_{n\rightarrow \infty} n\epsilon_n \int_{-\infty}^0 & G_n(t) dt =\lim_{n\rightarrow \infty}  \sqrt{2\pi}\zeta_n \exp\Big( \frac{1}{2} \zeta_n^2\Big) \Phi \left(\zeta_n\right) = \sqrt{2\pi}c \exp\left( \frac{c^2}{2}\right) \Phi \left(c\right),
    \end{align*}
    and so,
    \begin{align*}
        \lim_{n\rightarrow \infty} \PP(r_n>0) = \lim_{n\rightarrow \infty} \frac{n\epsilon_n \int_{-\infty}^0  G_n(t) dt }{1+ n\epsilon_n \int_{-\infty}^0  G_n(t) dt } = \frac{\sqrt{2\pi}c \exp\left( \frac{c^2}{2}\right) \Phi \left(c\right)}{1+\sqrt{2\pi}c \exp\left( \frac{c^2}{2}\right) \Phi \left(c\right)}.
    \end{align*}
    Next, by using Eq. \eqref{eq: mmn_tildern_mgf} and Markov's inequality, we have 
    \begin{align*}
        \PP\Big(\frac{1}{\sqrt{n\rho_n}} r_n > x + \zeta_n  \Big) & \leq \PP\Big(\frac{1}{\sqrt{n\rho_n}} r_n > x + \zeta_n \Big|  r_n>0  \Big)\\
        &\leq \Big[ 1+\tilde \kappa_1 \frac{\sqrt{3\pi}}{2} \Big]  e^{-\frac{x^2}{2}},
    \end{align*}
    where the last inequality follows by same calculations as in Eq. \eqref{eq: mmn_idleser_markobineq}.
    Now, result in Theorem \ref{thm: mmn_idleservers_HW} follows by picking $$\kappa_2 : = \Big[ 1+\tilde \kappa_1 \frac{\sqrt{3\pi}}{2} \Big].$$
    This completes the proof of Theorem \ref{thm: mmn_idleservers_HW}. 
    
    Finally, we consider the Sub-HW regime, i.e., $\epsilon_n = cn^{-\alpha}$ where $\alpha\in \Big(0,\frac{1}{2} \Big)$. In this case,
    \begin{align*}
        \zeta_n =  \Big( \frac{c^2 n^{1-2\alpha}}{1-cn^{-\alpha}} \Big)^{\frac{1}{2}} \geq cn^{\frac{1}{2}-\alpha}.
    \end{align*}
    As such $\zeta \rightarrow \infty$ as $n\rightarrow \infty$.
    From Eq. \eqref{eq: mmn_tildegn_neginfto0_lowerbound}, we have that
    \begin{align*}
        n\epsilon_n\int_{-\infty}^0 & G_n(t) dt \geq \frac{1}{\tilde \kappa_1}  \zeta_n \exp \big( \zeta_n \big).
    \end{align*}
     Then,
    \begin{align*}
        1- \PP(r_n >0) = \frac{1}{1+ n \epsilon_n\int_{-\infty}^0  G_n(t) dt } \leq \tilde \kappa_1  \zeta_n^{-1} \exp \big( -\zeta_n \big) \leq  \frac{\tilde \kappa_1}{c}  n^{\alpha - \frac{1}{2}}e^{-cn^{\frac{1}{2}-\alpha}}.
    \end{align*}
    Thus, for $\alpha \in \big( 0, \frac{1}{2}\big)$, by choosing $\kappa_3 = \tilde \kappa_1$, we have 
    \begin{align*}
        \PP(r_n >0) \geq 1-\frac{\kappa_3}{c}  n^{\alpha - \frac{1}{2}}e^{-cn^{\frac{1}{2}-\alpha}}, && \lim_{n\rightarrow \infty} \PP(r_n >0) =1. 
    \end{align*}
    Next, by using Eq. \eqref{eq: mmn_tildern_mgf} and Markov's inequality, we have 
    \begin{align*}
        \PP\Big(\frac{1}{\sqrt{n\rho_n}} \tilde r_n > x   \Big) & \leq \PP\Big(\frac{1}{\sqrt{n\rho_n}} r_n > x \Big|  r_n>0  \Big)\\
        &\leq \Big[ 1+\tilde \kappa_1 \frac{\sqrt{3\pi}}{2} e^{-cn^{\frac{1}{2}-\alpha}} \Big]  e^{-\frac{x^2}{2}}.
    \end{align*}
    Further, for $\theta> 0$, we have $\left(\int_{-\infty}^0 G_n(t) dt \right)^{-1}\int_{\infty}^{-\theta} G_n(t) dt \leq 1$. Then,
    \begin{align*}
        \EE \Big[ \exp \Big( -\frac{1}{\sqrt{n\rho_n}} \theta \tilde r_n \Big)\Big| r_n>0\Big] &\leq \exp \Big(n\rho_n \big(e^{\frac{\theta}{\sqrt{n\rho_n}}} - \frac{\theta}{\sqrt{n\rho_n}} -1\big)\Big)\\
        &\leq \exp \Big( \frac{1}{2}\theta^2 + \frac{\theta^3}{\sqrt{n\rho_n}}e^{\frac{\theta}{\sqrt{n\rho_n}}}  \Big),
    \end{align*}
    This gives us that 
    \begin{align*}
        \PP\Big(-\frac{1}{\sqrt{n\rho_n}} \tilde r_n > x\Big| r_n >0 \Big) \leq  e^{-\theta x}\exp \Big( \frac{1}{2}\theta^2 + \frac{\theta^3}{\sqrt{n\rho_n}}e^{\frac{\theta}{\sqrt{n\rho_n}}} \Big).
    \end{align*}
    By substituting $\theta =x$ in the above equation, we get that for any $x\geq 0$,
    \begin{align*}
         \PP\Big(-\frac{1}{\sqrt{n\rho_n}} \tilde r_n > x\Big| r_n >0 \Big) \leq \exp \Big( -\frac{1}{2}x^2 + \frac{x^3}{\sqrt{n\rho_n}}e^{\frac{x}{\sqrt{n\rho_n}}} \Big).
    \end{align*}
    Note that $-\tilde r_n \leq n \epsilon_n$ as $r_n \geq 0$. Hence, we consider the range of $x$ such that $\sqrt{n\rho_n} x < n\epsilon_n$, or $x< \zeta_n$. For any $x\geq \zeta_n$, the above inequality holds trivially. Then, for $x< \zeta_n$, we have $\frac{x}{\sqrt{n\rho_n}} \leq \frac{\epsilon_n}{\rho_n} \leq 2 \epsilon_n \leq 1 $, assuming $n^{\alpha} > 2c$ (which holds when $n\geq 4c^2$). Then, 
    \begin{align*}
        \PP\Big(-\frac{1}{\sqrt{n\rho_n}} \tilde r_n > x\Big| r_n >0 \Big) \leq \exp \Big( -\frac{1}{2}x^2 + 2e\epsilon_n x^2\Big).
    \end{align*}
    This completes the proof. 

\end{proof}

\end{appendix}

\end{document}